\newcommand\stout[1]{{\ifmmode\text{\st{$#1$}}\else\st{#1}\fi}} 
\DeclareMathOperator{\red}{red} 
\begin{document}
\title{Commutator width in the first Grigorchuk group}
\author{Laurent Bartholdi}
\author{Thorsten Groth}
\author{Igor Lysenok}
\begin{abstract}
  Let $G$ be the first Grigorchuk group.  We show that the commutator
  width of $G$ is $2$: every element $g\in [G,G]$ is a product of two
  commutators, and also of six conjugates of $a$. Furthermore, we show
  that every finitely generated subgroup $H\leq G$ has finite
  commutator width, which however can be arbitrarily large, and that
  $G$ contains a subgroup of infinite commutator width. The proofs
  were assisted by the computer algebra system GAP.
\end{abstract}
\maketitle

\section{Introduction}
Let $\Gamma$ be a group and let $\Gamma'=[\Gamma,\Gamma]$ denote its
derived subgroup. The \emph{commutator width} of $\Gamma$ is the least
$n\in \NN\cup \infty$ such that every element of $\Gamma'$ is a
product of $n$ commutators.

We compute, in this article, the commutator width of the \emph{first
  Grigorchuk group} $G$, see~\S\ref{ss:bg} for a brief
introduction. This is a prominent example from the class of
\emph{branched groups}, and as such is a good testing ground for
decision and algebraic problems in group theory. We prove:
\begin{thma}\label{thm:CWGrigorchukGroup}
  The first Grigorchuk group and its branching subgroup $K$ have
  commutator width $2$.
\end{thma}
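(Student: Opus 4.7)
The plan is to exploit the self-similarity of $G$ through the standard embedding $\psi:\Stab_G(1)\hookrightarrow G\times G$. Since $G/G'$ is elementary abelian of order $8$ (generated by the images of $a,b,c$), we have $G'\subseteq\Stab_G(1)$, so every $g\in G'$ decomposes as $\psi(g)=(g_0,g_1)$ whose entries generically have strictly shorter word length than $g$. Combined with the branching property $K\times K\subseteq\psi(K)$ of the branching subgroup $K$, this opens the way to an induction on length.

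For the upper bound, I would first describe precisely the image of $\psi(G')$ and $\psi(K')$ inside $G\times G$, characterised by a congruence on the coordinates modulo $G'$ (or a slightly larger normal subgroup). Assuming inductively that each of $g_0,g_1$ is a product of two commutators whose factors can be taken in $K$, the two pairs of commutators from the two coordinates are lifted back to $G$ using $K\times K\subseteq\psi(K)$: concretely, if $g_i=[x_{i,1},y_{i,1}][x_{i,2},y_{i,2}]$ with all $x_{i,j},y_{i,j}\in K$, then one forms $X_j,Y_j\in G$ with $\psi(X_j)=(x_{0,j},x_{1,j})$ and $\psi(Y_j)=(y_{0,j},y_{1,j})$, so that $\psi([X_1,Y_1][X_2,Y_2])=\psi(g)$ and hence $g=[X_1,Y_1][X_2,Y_2]$. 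A finite collection of short elements that do not reduce---a nucleus $\Nuc$---must then be handled directly, and one must also account for a small residue of $g$ lying outside the branch part, contributing finitely many additional cases.

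For the lower bound $\cw(G)\geq 2$, it suffices to exhibit a single element of $G'$ that is not a commutator. The natural route is to project to a finite quotient $G/N$, for instance the quotient by the kernel of the action on a sufficiently deep level of the tree, and verify via GAP that some specific element of $(G/N)'$ fails to be a commutator in $G/N$; this is a finite and decidable check. Pulling back, the same element of $G$ cannot be a commutator either.

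The main obstacle I expect is closing the induction so that both halves of the theorem---for $G$ and for $K$---are proved simultaneously, which requires the auxiliary elements $X_j,Y_j$ to lie in $K$ and not merely in $G$. Concretely, one needs (i) a finite nucleus $\Nuc$ whose members admit explicit $2$-commutator expressions with $K$-valued factors, (ii) a description of $\psi(G')$ and $\psi(K')$ precise enough to guarantee that the inductive hypothesis feeds back on itself, and (iii) a verification that the lifts produced by the branching property can always be adjusted to lie in $K$ rather than just in its ambient subgroup of $G\times G$. Managing this three-way bookkeeping---the $K$-constraint, the residue corrections, and the nucleus---is precisely what necessitates the GAP-assisted computation announced in the abstract.
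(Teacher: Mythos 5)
There is a genuine gap in the upper-bound argument: your induction does not close. You propose to ``assume inductively that each of $g_0,g_1$ is a product of two commutators whose factors can be taken in $K$,'' which presupposes $g_0,g_1\in G'$ (respectively $K'$) whenever $g\in G'$ (respectively $K'$). This fails: $(ad)^2=[a,d]\in G'$ yet $(ad)^2=\pair{b,b}$ with $b\notin G'$, and Lemma~\ref{lem:pIsDefinedModK'} shows only that the states of a $K'$-element lie in $K\times K$, which contains $K'$ with index $16$. So what you call a ``small residue of $g$ lying outside the branch part'' is not a correction on a finite nucleus; it occurs for every $g$, and taming it is exactly why the paper replaces bare elements by \emph{constrained} equations $(R_n g,\gamma)$, where the constraint $\gamma$ prescribes, modulo $K$, the coset in which each solution-variable must lie. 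The notion of a good pair $(g,\gamma)$ and Proposition~\ref{pro:existsNextPair} (every good pair has a good successor) supply precisely the three-way bookkeeping your last paragraph anticipates but does not carry out.

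There is a second structural issue. Once the variables, and not only $g$, are decomposed through $\Psi$, the two coordinate equations share variables whenever some variable has nontrivial activity; they are \emph{not} two independent two-commutator problems for $g_0$ and $g_1$. The paper resolves this (Lemma~\ref{lem:commonVar} together with the substitution $\ell_{Y_0,n}$) by eliminating a shared variable and merging the two coordinate equations into a single oriented equation of genus $2n-1$ with a single constant $(g\at{2})^x\cdot g\at{1}$; the resulting genus growth is what lets a single recurring good pair in the (finite) succession graph prove solvability at every genus $n\ge 3$, and then $n\ge 2$ after one further reduction. In your scheme the link between the two coordinates is silently severed. Your lower-bound idea is sound and matches the paper's in spirit; the paper projects onto the fourth germ group (a finite $2$-group of order $2^{26}$) rather than a congruence quotient $G/\Stab_G(n)$, because that quotient is small enough to run Burnside's character-theoretic criterion.
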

It was already proven in~\cite{Lysenok-Miasnikov-Ushakov:QuadraticEquationsInGrig} that
the commutator width of $G$ is finite, 
without providing an explicit bound.
Our result also
answers a question of Elisabeth
Fink~\cite[Question~3]{Fink:Conjugacy_growth}.
This is closely related to the problem of representing elements of the first Grigorchuk group by products
of conjugates, see~\cite{Fink:Conjugacy_growth}.
\begin{cora}\label{cor:productOf6Conjugates}
  Every element of $G'$ is a product of $6$ conjugates of the
  generator $a$ and there are elements $g\in G'$ which are 
  not products of $4$ conjugates of $a$.
  Every element of G is a product of at most $8$ conjugates of the standard generators $\{a,b,c,d\}$.
\end{cora}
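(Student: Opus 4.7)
The plan is to split the corollary into its upper bound (six conjugates of $a$ suffice for every $g \in G'$) and its lower bound (some $g \in G'$ requires more than four). Write $\Omega_k$ for the set of products of $k$ conjugates of $a$ in $G$. Since a conjugate $uau^{-1}$ and $a$ have the same image in the abelianization $G/G' \cong (\ZZ/2)^3$, any product of conjugates of $a$ that lies in $G'$ must involve an even number of factors; thus $G' \cap \Omega_k = \emptyset$ for odd $k$, so the relevant sets inside $G'$ are $\Omega_0, \Omega_2, \Omega_4, \Omega_6, \ldots$.

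For the upper bound, I would invoke Theorem~\ref{thm:CWGrigorchukGroup} to write $g = [x_1, y_1] [x_2, y_2]$ for any $g \in G'$, and then argue that this decomposition can be repackaged as a product of at most six conjugates of $a$. Elementary identities such as $[x, a] = (x a x^{-1}) \cdot a \in \Omega_2$ and $[a, b] = a \cdot (b a b^{-1}) \in \Omega_2$ already place many simple commutators in $\Omega_2$, and a general commutator is reduced to these via the identity $[x, y_1 y_2] = [x, y_1] \cdot y_1 [x, y_2] y_1^{-1}$. To handle the remaining atomic pieces $[x, s]$ with $s \in \{b, c, d\}$, I would write $x = a^{\varepsilon} u$ with $u \in \Stab_G(1)$ and exploit the self-similar embedding $\Stab_G(1) \hookrightarrow G \times G$, inducting on tree-depth as elsewhere in the paper.

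For the lower bound, I would exhibit an explicit $g \in G' \setminus \Omega_4$ by passing to a finite quotient. Because $G$ is residually a finite $2$-group via the level stabilizers, it suffices to find a quotient $Q_n = G/\Stab_G(n)$ and an element of $G'$ whose image in $Q_n$ lies outside the (finite, enumerable) image of $\Omega_4$. For small $n$ this search is routine by computer in GAP, as is already done for the main theorem, and any witness lifts to the desired $g \in G'$.

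The hardest step is the upper bound: Theorem~\ref{thm:CWGrigorchukGroup} only provides a two-commutator factorization, and turning this into six conjugates of $a$ without blowing up the count requires either a refined form of that theorem --- producing commutators of controlled ``$a$-length'' --- or a joint rewriting argument across the two commutators exploiting cancellations between them. I expect the proof to piggyback on the constructive proof of Theorem~\ref{thm:CWGrigorchukGroup}, likely with computer assistance, rather than to be a purely formal consequence.
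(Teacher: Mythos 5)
Your upper‑bound strategy has a genuine gap, and you spot it yourself: writing $g=[x_1,y_1][x_2,y_2]$ and then ``repackaging'' into conjugates of $a$ is not a formal deduction. The identity $[x,y_1y_2]=[x,y_1]\cdot{}^{y_1}[x,y_2]$ grows the conjugate count linearly in the word length of the entries, so nothing in the two‑commutator factorization controls the number of conjugates of $a$ needed. There is no reason a commutator in $G$ should be a product of boundedly many conjugates of $a$, and the ``refined form of the theorem'' you hope for is not what the paper produces.

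What the paper actually does is treat $\eq E = a^{X_1}a^{X_2}a^{X_3}a^{X_4}a^{X_5}a\,g$ directly as a quadratic $G$-equation (each $X_i$ occurs exactly twice, so it is in $\eq O_{0,6}$) and run the same branch-structure machinery used for $R_n g$. Because $a$ has nontrivial activity and $g\in G'$ is inactive, every compatible constraint has an odd number of active variables; passing to states via $\Psi$ and bringing the resulting one-coordinate equation to normal form produces $R_2\bigl((g\at2)^{*}\cdot g\at1\bigr)$ — the genus jumps from $0$ to $2$. One then checks (by a finite GAP computation) that for every $q\in G'/K'$ there is a constraint making the resulting pair good, and Corollary~\ref{cor:solvableConstraintedEquations} finishes. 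The point you miss is that the equation ``$g$ is a product of $6$ conjugates of $a$'' is itself a quadratic equation amenable to the paper's normal-form/state-decomposition machine; one does \emph{not} go through the commutator factorization at all. The number $6$ is exactly what is needed so that, after this state decomposition, the genus comes out $\ge 2$.

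For the lower bound, your brute-force search in $G/\Stab_G(n)$ is a reasonable idea but is not what the paper does, and it is not obvious a level-stabilizer quotient would exhibit the obstruction. The paper again normalizes the state decomposition of $a^{X_1}a^{X_2}a^{X_3}a\,g$, finds that it comes out at genus $1$, namely $R_1\bigl((g\at2)^{*}\cdot g\at1\bigr)$, and thereby \emph{reduces} ``$g$ is not a product of $4$ conjugates of $a$'' to ``some $h\in K$ has $h\at2\cdot h\at1$ not a commutator.'' Taking $h=\pair{cag,ac}\in K$ (membership checked via the branch structure) gives $h\at2\cdot h\at1 = ac\cdot cag = g$, where $g$ is the explicit non-commutator of Figure~\ref{fig:noncomm}, which was itself produced via Burnside's character-sum criterion in the finite germ quotient $G_4$, not in a level quotient. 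So the finite-quotient search you envision does occur in the paper, but one level lower (for non-commutators), after a reduction you do not anticipate.

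Your abelianization remark ($G'\cap\Omega_k=\emptyset$ for odd $k$) is correct and explains why $4$ is the right lower threshold to attack, but it plays no role in the paper's argument.
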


There are examples of groups of finite commutator width with subgroups
of infinite commutator width; and even finitely presented, perfect
examples in which the subgroup has finite index, see
Example~\ref{ex:commwidthsubgroup}. However, we can prove:
\begin{thma}\label{thm:subgroups}
  Every finitely generated subgroup of $G$ has finite commutator
  width; however, their commutator width cannot be bounded, even among
  finite-index subgroups. Furthermore, there is a subgroup of $G$ of infinite commutator width.
\end{thma}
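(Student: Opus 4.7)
The plan proceeds in three stages, each exploiting the branch self-similar structure of $G$ together with the commutator-width bound of Theorem~\ref{thm:CWGrigorchukGroup}.

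For the first assertion, I would take a finitely generated $H \leq G$ and pass to its finite-index subgroup $H_1 := H \cap \Stab_G(1)$. Under the branching embedding $\psi\colon \Stab_G(1) \hookrightarrow G \times G$ the two coordinate projections of $\psi(H_1)$ are again finitely generated subgroups of $G$. Iterating this decomposition down the tree and invoking the contracting property of $G$, one sees that at uniformly bounded depth all sections of $H$ lie in a finite nucleus of subgroups of $G$. A commutator expression for a given $h \in H'$ is then assembled level by level, using Theorem~\ref{thm:CWGrigorchukGroup} at the bottom to convert branching commutators into a bounded number of $G$-commutators. The main technical obstacle is the bookkeeping: mismatches between the two halves of $\psi(h)$ leave germ data in $G/\Stab_G(1)$ that must be absorbed into a small, $h$-independent number of additional commutators.

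For the second assertion, I would exhibit, for each $n$, an explicit finite-index subgroup $L_n \leq G$ with commutator width exceeding $n$. Natural candidates come from deep-level stabilizers or from normal closures of suitable powers; inside each $L_n$ I would locate an abelian quotient $Q_n$ onto which a test element $g_n \in [L_n, L_n]$ maps to a sum of many linearly independent generators, while the image of a single commutator is bounded. This forces $g_n$ to require at least $n$ commutators in $L_n$. Producing such a family together with verifiable lower bounds is the heart of this step, and it is where I expect the main obstacle to lie, since lower bounds on commutator width are typically far more delicate than upper bounds.

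For the third assertion, I would combine the previous two parts using the branch structure. Fix an infinite antichain $v_1, v_2, \dots$ of vertices in the rooted binary tree; their rigid stabilizers $\operatorname{Rist}_G(v_i)$ commute pairwise and together generate their internal restricted direct sum inside $G$. Each $\operatorname{Rist}_G(v_i)$ is isomorphic to a fixed subgroup of $G$, so into each of them I can inject an isomorphic copy of $L_{n_i}$ from the previous paragraph, with $n_i \to \infty$. The resulting subgroup $M := \bigoplus_i L_{n_i} \leq G$ then has commutator width at least $n_i$ for every $i$: since the block projection $M \twoheadrightarrow L_{n_i}$ is a retraction, any expression of $g_{n_i} \in [L_{n_i}, L_{n_i}] \subseteq [M, M]$ as a product of $k$ commutators in $M$ pushes forward to such an expression in $L_{n_i}$, forcing $k \geq n_i$. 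Letting $i \to \infty$ yields the required subgroup of infinite commutator width.
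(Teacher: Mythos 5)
Your proposal diverges from the paper in ways that expose genuine gaps in the first two parts, while the third part is essentially the paper's argument.

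For the first assertion, the paper does not attempt a level-by-level decomposition of an arbitrary finitely generated $H$. Instead it proves finiteness of commutator width for \emph{finite-index} subgroups directly (using $K^{\times 2^n}\leq H$, the commutator width $2$ of $K^{\times 2^n}$ from Corollary~\ref{cor:KhasCW2}, and a transversal argument), and then invokes the theorem of Grigorchuk and Wilson that every infinite finitely generated subgroup of $G$ is abstractly commensurable to $G$ itself. That commensurability theorem is the crucial ingredient you are missing: without it, your ``iterate the wreath decomposition and appeal to contraction'' sketch has no termination or uniformity mechanism. The nucleus of $G$ controls sections of \emph{elements}, not of \emph{subgroups}; the coordinate projections of $\psi(H\cap\Stab_G(1))$ need not stabilize in any finite family, and the ``bookkeeping'' you flag as the main obstacle is in fact an unfilled gap, not a routine step.

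For the second assertion, your plan to bound commutator width from below by mapping onto an \emph{abelian} quotient $Q_n$ cannot work as stated: in an abelian quotient every commutator of $L_n$ is trivial, so every $g_n\in[L_n,L_n]$ also maps to the identity, and no lower bound follows. The paper instead produces explicit finite class-$2$ nilpotent $2$-groups $\Gamma_n=F_n/\langle\gamma_3(F_n),x_1^2,\dots,x_n^2\rangle$ whose derived subgroup has order $2^{\binom n2}$ while the number of commutators (and hence of bounded products of commutators) is too small, giving commutator width growing linearly in $n$. It then uses that every finite $2$-group is both a subgroup of $K$ and a quotient $H_1/H_2$ of finite-index subgroups of $G$ (via embeddings of iterated wreath products $C_2\wr\cdots\wr C_2$ into level quotients), and the observation that if $H_1/H_2$ has commutator width $\geq n$ then so does $H_1$. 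You would need to replace your abelian-quotient lower bound with a counting argument of this type. For the third assertion your rigid-stabilizer/restricted-direct-sum construction and the retraction argument match the paper's use of nested copies of $K$ under $k\mapsto\langle\!\langle k,\id\rangle\!\rangle$, and that part is sound.
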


\subsection{Commutator width}
Let $\Gamma$ be a group. It is well-known that usually elements of
$\Gamma'$ are not commutators---for example,
$[X_1,X_2]\cdots[X_{2n-1},X_{2n}]$ is not a commutator in the free
group $F_{2n}$ when $n>1$. In fact, every non-abelian free group has
infinite commutator width, see~\cite{Rhemtulla:CommutatorsF2}.

On the other hand, some classes of groups have finite commutator
width: finitely generated virtually abelian-by-nilpotent
groups~\cite{Segal:Words}, and finitely generated solvable groups of
class $3$, see~\cite{Rhemtulla:Commutators}.

Finite groups are trivial examples of groups of finite commutator
width.  There are finite groups in which some elements of the derived subgroup are not
commutators, the smallest having order $96$,
see~\cite{Guralnick:Group96}. On the other hand, non-abelian finite
simple groups have commutator width $1$, as was conjectured by Ore
in~1951, see~\cite{Ore:Commutators}, and proven in~2010,
see~\cite{Liebeck:OreConjecture}. The commutator width cannot be
bounded among finite groups; for example,
$\Gamma_n=\langle x_1,\dots,x_{2n}\mid
x_1^p,\dots,x_{2n}^p,\gamma_3(\langle x_1,\dots,x_{2n})\rangle$ is a
finite class-$2$ nilpotent group in which $\Gamma_n'$ has order
$p^{\binom{2n}2}$ but at most $\binom{p^{2n}}2$ elements are
commutators, so $\Gamma_n$'s commutator width is at least $n/2$.

Commutator width of groups, and of elements, has proven to be an
important group property, in particular via its connections with
``stable commutator length'' and bounded
cohomology~\cite{Calegari:SCL}. It is also related to solvability of
quadratic equations in groups: a group $\Gamma$ has commutator width
$\le n$ if and only if the equation
$[X_1,X_2]\cdots[X_{2n-1},X_{2n}]g=\id$ is solvable for all
$g\in\Gamma'$. Needless to say, there are groups in which solvability
of equations is algorithmically undecidable. It was proven
in~\cite{Lysenok-Miasnikov-Ushakov:QuadraticEquationsInGrig} that 
there exists an algorithm to check solvability of quadratic equations  in the first Grigorchuk group.

We note that if the character table of a group $\Gamma$ is computable,
then it may be used to compute the commutator width: Burnside shows
(or, rather, hints) in~\cite[\S238, Ex. 7]{Burnside:Groups} that an
element $g\in\Gamma$ may be expressed as a product of $r$ commutators
if and only if
\[\sum_{\chi\in\operatorname{Irr}(\Gamma)}\frac{\chi(g)}{\chi(1)^{2r-1}}>0.
\label{eq:BurnsideFormula} \]
This may yield another proof of Theorem~\ref{thm:CWGrigorchukGroup},
using the quite explicit description of $\operatorname{Irr}(G)$ given
in~\cite{Bartholdi:RepresentationZetaFunctions}.

Consider a group $\Gamma$ and a subgroup $\Delta$. There is in general
little connection between the commutator width of $\Gamma$ and that of
$\Delta$. If $\Delta$ has finite commutator width and
$[\Gamma:\Delta]$ is finite, then obviously $\Gamma$ also has finite
commutator width---for example, because
$\Gamma/\operatorname{core}(\Delta)'$ is virtually abelian, and every
commutator in $\Gamma$ can be written as a product of a commutator in
$\Delta$ with the lift of one in
$\Gamma/\operatorname{core}(\Delta)'$, but that seems to be all that
can be said. Danny Calegari pointed to us the following example:
\begin{ex}\label{ex:commwidthsubgroup}
  Consider the group $\Delta$ of orientation-preserving
  self-homeomorphisms of $\RR$ that commute with integer translations,
  and let $\Gamma$ be the extension of $\Delta$ by the involution
  $x\mapsto-x$. Then, by~\cite[Theorems~2.3
  and~2.4]{Eisenbud-Hirsch-Neumann:SeifertBundles}, every element of
  $\Gamma'=\Delta$ is a commutator in $\Gamma$, while the commutator
  width of $\Delta$ is infinite.

  Both $\Gamma$ and $\Delta$ can be made perfect by replacing them
  respectively with $(\Gamma\wr A_5)'$ and $\Delta\wr A_5$; and can be
  made finitely presented by restricting to those self-homeo\-mor\-phisms
  that are piecewise-affine with dyadic slopes and breakpoints.
\end{ex}

\subsection{Branched groups}\label{ss:bg}
We briefly introduce the first Grigorchuk
group~\cite{Grigorchuk:Burnside} and some of its properties. For a more
detailed introduction into the topic of self-similar groups we refer
to~\cite{Bartholdi-Grigorchuk-Sunik:BranchGroups,Nekrashevych:SelfSimilarGroups} and
to Section~\ref{sec:SelfSimilarGroups}.

A \emph{self-similar group} is a group $\Gamma$ endowed with an
injective homomorphism $\Psi\colon\Gamma\to\Gamma\wr S_n$ for some
symmetric group $S_n$. It is \emph{regular branched} if there exists a
finite-index subgroup $K\le\Gamma$ such that $\Psi(K)\ge K^n$. It is
convenient to write $\pair{g_1,\dots,g_n}\pi$ for an element
$g\in\Gamma\wr S_n$. We call $g_i$ the \emph{states} of $g$ and $\pi$
its \emph{activity}. It is also convenient to identify, in a
self-similar group, elements with their image under $\Psi$.

Note that, by definition, every group may be viewed as self-similar;
the self-similarity is an attribute of a group, not a
property. However, being regular branched imposes strong conditions on
the group.

A self-similar group may be specified by giving a set $S$ of
generators, some relations that they satisfy, and defining $\Psi$ on
$S$. There is then a maximal quotient $\Gamma$ of the free group $F_S$
on which $\Psi$ induces an injective homomorphism to $\Gamma\wr S_n$.

The first Grigorchuk group $G$ may be defined in this manner. It is
the group generated by $S=\{a,b,c,d\}$, with $a^2=b^2=c^2=d^2=bcd=\id$,
and with
\[a=\pair{\id,\id}(1,2),\quad b=\pair{a,c},\quad c=\pair{a,d},\quad d=\pair{\id,b}. \]

Here are some remarkable properties of $G$: it is an infinite torsion
group, and more precisely for every $g\in G$ we have $g^{2^n}=\id$ for
some $n\in\NN$. On the other hand, it is not an Engel group, namely it
is not true that for every $g,h\in G$ we have $[g,h,\dots,h]=\id$ for
a long-enough iterated commutator~\cite{Bartholdi:Engel}. It is a
group of intermediate word growth~\cite{Grigorchuk:Milnor}, and
answered in this manner a celebrated question of Milnor.
For more information about the Grigorchuk group, see the extensive survey~\cite{Grigorchuk:Survey}.

We have decided to concentrate on the first Grigorchuk group in the
computational aspects of this text; though our code would function
just as well for other examples of self-similar branched groups, such
as the Gupta-Sidki groups~\cite{Gupta-Sidki:PGroups}.


\subsection{Sketch of proofs}
The general idea for the proof of Theorem~\ref{thm:CWGrigorchukGroup}
is the decomposition of group elements into states via $\Psi$. We show
that each element $g\in G'$ is a product of two commutators by solving
the equation $\eq{E}=[X_1,X_2]\cdots[X_{2n-1},X_{2n}]g$ for all
$n\geq 2$.

If there is a solution then the values of the variables $X_i$ have
some activities $\sigma_i$. If we fix a possible activity of the
variables of $\eq E$ then by passing to the states of the $X_i$ we are
led to two new equations which (under mild assumptions and after some
normalization process) yields a single equation of the same form but
of higher genus.

Not all solutions for the new equations lead back to solutions of the
original equation. Thus instead of pure equations we consider
\emph{constrained} equations: we require the variables to lie in
specified cosets of the finite-index subgroup $K$. The pair composed
of a constraint and an element $g\in G$ will be a \emph{good pair} if
there is some $n$ such that the constrained equation
$[X_1,X_2]\cdots[X_{2n-1},X_{2n}]g$ is solvable.  It turns out that
this only depends on the image of $g$ in the finite quotient
$\RNK{G}{K'}$.

Then by direct computation we show that every good pair leads to
another good pair in which the genus of the equation increases.  We
build a graph of good pairs which turns out to be finite since the
constants of the new equation are states of the old equation and we
can use the strong contracting property of $G$.

The computations could in principle be done by hand, but one of our
motivations was precisely to see to which point they could be
automated. We implemented them in the computer algebra system
GAP~\cite{GAP4}. The source code for these computations is distributed
with this document as ancillary material. It can be validated using
precomputed data on a GAP standard installation by running the command
\gapinline{gap verify.g} in its main directory.

To perform more advanced experimentation with the code and to recreate
the precomputed data, the required version of GAP must be at least
$4.7.6$ and the packages FR~\cite{FR2.3.6} and LPRES~\cite{LPRES0.3.0}
must be installed.

\section{Equations}
We fix a set $\mathcal{X}$ and call its elements \emph{variables}.  We
assume that $\mathcal{X}$ is infinite countable, is well ordered, and
that its family of finite subsets is also well ordered, by size and
then lexicographic order. We denote by $F_{\mathcal{X}}$ the free
group on the generating set $\mathcal{X}$. We use $\id$ for the
identity element of groups, and for the identity maps, to distinguish
it from the numerical $1$.

\begin{defi}[$G$-group, $G$-homomorphism]
  Let $G$ be a group. A \emph{$G$-group} is a group with a
  distinguished copy of $G$ inside it; a typical example is 
  $H*G$ for some group $H$. A \emph{$G$-homomorphism} 
  between $G$-groups is a homomorphism
  that is the identity between the marked copies of $G$.

  A \emph{$G$-equation} is an element $\eq{E}$ of the $G$-group
  $F_{\mathcal{X}} * G$, regarded as a reduced word in
  $\mathcal X\cup\mathcal X^{-1}\cup G$. For $\eq{E}$ a $G$-equation,
  its set of \emph{variables} $\Var(\eq{E})\subset \mathcal{X}$ is the
  set of symbols in $\mathcal{X}$ that occur in it; namely,
  $\Var(\eq{E})$ is the minimal subset of $\mathcal{X}$ such that
  $\eq{E}$ belongs to $F_{\Var(\eq{E})}*G$.

  An \emph{evaluation} is a $G$-homomorphism $e\colon F_{\mathcal{X}} * G \to G$.
  A \emph{solution} of an equation $\eq{E}$ is an evaluation $s$
  satisfying $s(\eq{E})=\id$. If a solution exists for $\eq{E}$ then the
  equation $\eq{E}$ is called \emph{solvable}. The set of elements
  $X\in \mathcal{X}$ with $s(X)\neq\id$ is called the \emph{support} of the solution.
\end{defi}

The support of a solution for an equation $\eq{E}$ may be assumed to be
a subset of $F_{\Var(\eq{E})}$ and hence the data of a solution
is equivalent to a map $\Var(\eq{E}) \to G$.  The question of whether an
equation $\eq{E}$ is solvable will be referred to as the \emph{Diophantine
problem} of $\eq{E}$.

Every homomorphism $\varphi \colon G \to H$ extends uniquely to an
$F_{\mathcal{X}}$-homomorphism
$\varphi_* \colon F_{\mathcal{X}}*G \to F_{\mathcal{X}}*H$.  In this
manner, every $G$-equation $\eq{E}$ gives rise to an $H$-equation
$\varphi_*(\eq{E})$, which is solvable whenever $\eq{E}$ is solvable.

\begin{defi}[Equivalence of equations]
  Let $\eq{E},\eq{F}\in F_{\mathcal{X}}* G$ be two $G$-equations. We
  say that $\eq{E}$ and $\eq{F}$ are \emph{equivalent} if there is a
  $G$-automorphism $\varphi$ of $F_{\mathcal{X}}*G$ that maps $\eq{E}$
  to $\eq{F}$. We denote by $\Stab(\eq E)$ the group of
  all $G$-automorphisms that fix $\eq E$.
\end{defi}
\begin{lem}
  Let $\eq{E}$ be an equation and let $\varphi$ be a $G$-endomorphism of
  $F_{\mathcal{X}}*G$. If $\varphi(\eq{E})$ is solvable then so is $\eq{E}$. In particular,
  the Diophantine problem is the same for equivalent equations.
\end{lem}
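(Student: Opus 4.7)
The plan is to produce a solution of $\eq{E}$ directly by composition. Given a solution $s\colon F_{\mathcal{X}}*G\to G$ of $\varphi(\eq{E})$, I would consider the map $s\circ\varphi\colon F_{\mathcal{X}}*G\to F_{\mathcal{X}}*G\to G$. Since the composition of two $G$-homomorphisms is a $G$-homomorphism, $s\circ\varphi$ is an evaluation in the sense of the preceding definition. Then
\[
(s\circ\varphi)(\eq{E}) \;=\; s\bigl(\varphi(\eq{E})\bigr) \;=\; \id,
\]
so $s\circ\varphi$ is a solution of $\eq{E}$. This immediately gives the first statement.

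For the ``in particular'' clause I would apply the first statement twice. If $\varphi$ is a $G$-automorphism of $F_{\mathcal{X}}*G$ mapping $\eq{E}$ to $\eq{F}$, then solvability of $\eq{F}=\varphi(\eq{E})$ implies solvability of $\eq{E}$ by what was just shown. Conversely, writing $\eq{E}=\varphi^{-1}(\eq{F})$ and applying the first statement to the $G$-endomorphism $\varphi^{-1}$ and the equation $\eq{F}$, solvability of $\varphi^{-1}(\eq{F})=\eq{E}$ gives solvability of $\eq{F}$. Hence the Diophantine problem of $\eq{E}$ and of $\eq{F}$ coincide.

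There is essentially no obstacle here: the only point that needs a second's attention is checking that $s\circ\varphi$ lands in $G$ and is the identity on the distinguished copy of $G$, both of which follow from the definitions of $G$-homomorphism and evaluation. No properties specific to the first Grigorchuk group are used; the lemma holds for an arbitrary group $G$ in place of it.
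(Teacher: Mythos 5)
Your argument is exactly the paper's: given a solution $s$ of $\varphi(\eq{E})$, the composition $s\circ\varphi$ is a $G$-homomorphism to $G$ killing $\eq{E}$, hence a solution. The elaboration of the ``in particular'' clause via $\varphi$ and $\varphi^{-1}$ is the intended (and standard) reading; nothing is missing.
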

\begin{proof}
  If $s$ is a solution for $\varphi(\eq{E})$, then $s\circ\varphi$ is a
  solution for $\eq{E}$.
\end{proof}

\subsection{Quadratic equations}
A $G$-equation $\eq{E}$ is called \emph{quadratic} if for each variable
$X\in \Var(\eq{E})$ exactly two letters of $\eq{E}$ are $X$ or $X^{-1}$, when
$\eq{E}$ is regarded as a reduced word.

A $G$-equation $\eq{E}$ is is called \emph{oriented} if for each variable
$X\in \Var(\eq{E})$ the number of occurrences with positive and with
negative sign coincide, namely if $\eq{E}$ maps to the identity under the
natural map $F_{\mathcal{X}}*G\to F_{\mathcal{X}}/[F_{\mathcal{X}},F_{
\mathcal{X}}]*\id$.
Otherwise $\eq{E}$ is called \emph{unoriented}.
\begin{lem}
 Being oriented or not is preserved under equivalence of equations.
\end{lem}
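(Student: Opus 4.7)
The plan is to reformulate ``oriented'' in intrinsic terms and then observe that every $G$-automorphism respects this reformulation.

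First, I would introduce the \emph{orientation map}: let $N$ denote the normal closure of the distinguished copy of $G$ inside $F_{\mathcal{X}}*G$, so that the projection $F_{\mathcal{X}}*G \twoheadrightarrow (F_{\mathcal{X}}*G)/N$ is canonically identified with the map $F_{\mathcal{X}}*G \to F_{\mathcal{X}}$ that kills $G$ and leaves each variable fixed. Composing with abelianization gives a homomorphism
\[
\pi\colon F_{\mathcal{X}}*G \;\longrightarrow\; F_{\mathcal{X}}/[F_{\mathcal{X}},F_{\mathcal{X}}] \;\cong\; \bigoplus_{X\in\mathcal{X}} \ZZ\cdot X.
\]
By unwinding the definition, $\pi(\eq{E})$ records, for each variable $X$, the signed count of occurrences of $X$ in $\eq{E}$. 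Thus $\eq{E}$ is oriented if and only if $\pi(\eq{E})=0$.

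Next I would argue that any $G$-automorphism $\varphi$ of $F_{\mathcal{X}}*G$ descends to an automorphism of the target of $\pi$. Since $\varphi$ fixes $G$ pointwise, it maps the set $G$ to itself, and hence it sends the normal closure $N$ of $G$ to the normal closure of $\varphi(G)=G$, which is again $N$; the same holds for $\varphi^{-1}$, so $\varphi(N)=N$. Consequently $\varphi$ induces an automorphism $\bar\varphi$ of the quotient $F_{\mathcal{X}}$, and then an automorphism $\widehat\varphi$ of $F_{\mathcal{X}}/[F_{\mathcal{X}},F_{\mathcal{X}}]$, satisfying $\pi\circ\varphi = \widehat\varphi\circ\pi$.

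The conclusion is immediate: if $\varphi(\eq{E})=\eq{F}$ then
\[
\pi(\eq{F}) \;=\; \pi(\varphi(\eq{E})) \;=\; \widehat\varphi(\pi(\eq{E})),
\]
and since $\widehat\varphi$ is an automorphism of an abelian group, $\pi(\eq{E})=0$ if and only if $\pi(\eq{F})=0$. Hence $\eq{E}$ is oriented precisely when $\eq{F}$ is. There is no real obstacle here; the only point that requires a brief verification is the stability $\varphi(N)=N$, which relies on $\varphi$ fixing $G$ pointwise and being bijective.
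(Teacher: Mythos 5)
Your proof is correct and takes essentially the same approach as the paper: both arguments rest on the observation that the kernel of the orientation homomorphism (equivalently, the normal closure of $[F_{\mathcal{X}},F_{\mathcal{X}}]$ together with the marked copy of $G$) is stable under $G$-automorphisms of $F_{\mathcal{X}}*G$. The paper phrases this directly as preservation of that kernel subgroup by $G$-endomorphisms, while you unwind it a bit further into an induced automorphism of $F_{\mathcal{X}}^{\mathrm{ab}}$; the content is the same.
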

\begin{proof}
  $\eq{E}$ is oriented if and only if it belongs to the normal closure of
  $[F_{\mathcal{X}},F_{\mathcal{X}}]*G$; this subgroup is preserved by all $G$-endomorphisms
  of $F_{\mathcal{X}}*G$.
\end{proof}

\subsection{Normal form of quadratic equations} \label{sec:normal_form}
\begin{defi}[$\eq{O}_{n,m}, \eq{U}_{n,m}$]
  For $m,n\ge0$, $X_i,Y_i,Z_i \in\mathcal{X}$ and $c_i \in G$ the following two kinds of
  equations are called in \emph{normal form}:
 \begin{align}
  \eq{O}_{n,m}:\qquad & [X_1,Y_1][X_2,Y_2]\cdots[X_n,Y_n]c_1^{Z_1}\cdots c_{m-1}^{Z_{m-1}}c_m  \\
   \eq{U}_{n,m}:\qquad & X_1^2X_2^2\cdots X_n^2 c_1^{Z_1}\cdots c_{m-1}^{Z_{m-1}}c_m\ .
 \end{align} 
 The form $\eq{O}_{n,m}$ is called the oriented case and $\eq{U}_{n,m}$ for
 $n>0$ the unoriented case.  The parameter $n$ is referred to as
 the \emph{genus} of the normal form of an equation.
\end{defi}

We recall the following result, and give the details of the proof in
an algorithmic manner, because we will need them in practice:
\begin{thm}[\cite{Comerford-Edmunds:EquationsFreeGroups}] \label{Thm:equationNormalForm}
  Every quadratic equation $\eq{E} \in F_{\mathcal{X}}*G$ is equivalent to an equation
  in normal form, and the $G$-isomorphism can be effectively computed.
\end{thm}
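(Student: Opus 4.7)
The strategy is to give an algorithm, explicit enough to be implemented, that reduces any quadratic equation $\eq{E}$ to one of the normal forms $\eq{O}_{n,m}$ or $\eq{U}_{n,m}$ via a finite sequence of $G$-automorphisms of $F_{\mathcal{X}}*G$. I would proceed by induction on the total length of $\eq{E}$ (number of letters in its reduced form), together with a secondary complexity that measures how far the constant part and the variable part deviate from their target shape.

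The basic building blocks are of two kinds. First, \emph{constant moves}: substitutions of the form $X \mapsto Xg$ or $X \mapsto gX$ for $g\in G$, which shuffle constants past adjacent variables and allow each remaining constant $c \in G$ to be isolated as a conjugate $c^Z$ by a dedicated variable $Z$. Second, \emph{Nielsen moves on variables}: substitutions $X \mapsto XY^{\pm1}$ and $X \mapsto Y^{\pm1}X$, which make it possible to gather the two occurrences of a variable into an adjacent pair $X^2$ or $X\,w\,X^{-1}$ that can then be reorganized as a commutator $[X,Y]$.

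The algorithm then runs in two phases. In the first phase I handle the constants: after cyclic reduction I push each constant $c_i$ occurring in $\eq{E}$ rightwards using constant moves, each time introducing (or re-using) a fresh variable $Z_i$ so that $c_i$ appears as the factor $c_i^{Z_i}$. Iterating produces an equation of the shape $w \cdot c_1^{Z_1}\cdots c_{m-1}^{Z_{m-1}} c_m$ in which $w\in F_{\mathcal X}$ is a purely variable word and each $Z_i$ occurs only in its conjugation. In the second phase I normalize $w$. Since the oriented/unoriented dichotomy is preserved by $G$-automorphisms (by the preceding lemma), the two cases proceed independently. In the oriented case, every variable in $w$ appears once positively and once negatively; I successively pair variables into commutators $[X_i,Y_i]$ by Nielsen moves, reading commutators off from the left and verifying that the moves do not disturb the already normalized prefix. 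In the unoriented case some variable $X_1$ appears twice with the same sign; I bring both occurrences adjacent to form $X_1^2$, and then use the classical identity that converts any mixture $X^2[Y,Z]$ into a product of three squares $X'^2Y'^2Z'^2$ after an appropriate Nielsen substitution, so that all commutators are absorbed into squares and the final word becomes $X_1^2\cdots X_n^2$.

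The main obstacle is the bookkeeping: one must check that at each step an explicit complexity strictly decreases, that the substitutions of the variable phase do not reintroduce constants into the already normalized tail $c_1^{Z_1}\cdots c_m$, and that the fresh variables $Z_i$ remain disjoint from those used as $X_i,Y_i$. These issues are resolved by fixing once and for all a total order on $\mathcal X$, always drawing fresh variables from its tail, and choosing the complexity measure to combine length with a lexicographic term that forbids backward moves. Termination and correctness then follow by straightforward induction, and this bookkeeping is exactly what is automated in the accompanying GAP implementation.
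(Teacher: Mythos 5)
Your strategy is a genuine variant of the one in the paper. The paper's proof is a single induction on the number of variables: at each step it picks a variable $X$ with $\eq{E} = uX^{\pm 1}vX^{\mp 1}w$ and $\Var(v)$ minimal, then splits into cases on $v$, peeling off either a conjugated constant $v^X$ or a commutator/square in one move, and recursing on the shorter word $uw$ (or $uv^{-1}w$). Constants and variables are handled together, and the conjugating variables $Z_i$ of the normal form are the very variables that sandwich constants in $\eq{E}$, identified explicitly as part of the recursion. You instead propose a two-phase scheme: first sweep all constants into a tail $c_1^{Z_1}\cdots c_m$, then normalize a purely variable prefix. In spirit this is closer to the classical surface-word normalization and could be made to work, but as written it has two real gaps.

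First, the \emph{fresh-variable} device does not do what you want. A $G$-automorphism of $F_{\mathcal X}*G$ can indeed introduce a variable not occurring in $\eq{E}$, but the number of variables appearing in the normal form $\eq{O}_{n,m}$ or $\eq{U}_{n,m}$ is forced (it is $2n+(m-1)$ in the oriented case), so any fresh $Z_i$ you introduce must later be absorbed back, otherwise you end with a quadratic word that is \emph{not} one of the allowed normal forms. The paper avoids this entirely: the $Z_i$ are re-used variables of $\eq{E}$, produced by the substitution $X\mapsto Xw^{-1}$ (or $X\mapsto Xbw^{-1}$) in Case~1.0, which moves a constant into conjugated position without changing the variable set. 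Second, your termination argument is underspecified and, as stated, would not go through. A ``constant move'' $X\mapsto Xg$ replaces \emph{both} occurrences of $X$: the constant $g$ does not simply slide to the right, it vanishes at one occurrence of $X$ and reappears next to the other one, wherever that is. So ``push each constant rightwards'' is not a monotone operation under the obvious length measure, and the ``lexicographic term that forbids backward moves'' is never defined in a way that guarantees decrease. This is exactly the difficulty the paper circumvents by choosing $\Var(v)$ minimal and inducting on the number of variables (which strictly drops at each step by construction), rather than on word length plus an unspecified tiebreaker. If you want to salvage the two-phase plan, you need to (a) show that the $Z_i$ can always be taken among $\Var(\eq{E})$ and remain disjoint from the variables used in the commutator/square prefix, and (b) replace the hand-waved complexity by something concrete, e.g.\ the pair (number of variables, number of $G$-syllables not already in conjugated tail position) with a proof that each move decreases it lexicographically.
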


\begin{proof}
  The proof proceeds by induction on the number of variables.
  Starting with the oriented case: if the reduced equation $\eq{E}$ has no
  variables then it is already in normal form $\eq{O}_{0,1}$. If there is a
  variable $X\in\mathcal{X}$ occurring in $\eq{E}$ then $X^{-1}$ also appears.
  Therefore the equation has the form
  $\eq{E} = uX^{-1}vXw$ or can be brought to this form by applying the
  automorphism $X \mapsto X^{-1}$. Choose $X\in\mathcal{X}$ in such a way that
  $\Var(v)$ is minimal.
 
  We distinguish between multiple cases:
  \begin{itemize}
  \item[Case $1.0$:] $v\in G$. The word $uw$ has fewer variables than
    $\eq{E}$ and can thus be brought into normal form $r\in \eq{O}_{n,m}$ by a
    $G$-isomorphism $\varphi$. If $r$ ends with a variable, we use
    the $G$-isomorphism $\varphi \circ (X\mapsto Xw^{-1}) $ to map $\eq{E}$
    to the equation $rv^X \in \eq{O}_{n,m+1}$.   
    If $r$ ends with a group constant $b$, say $r=sb$, we use the
    isomorphism $\varphi \circ(X \mapsto Xbw^{-1}) $ to map $\eq{E}$ to the
    equation $sv^Xb\in \eq{O}_{n,m+1}$.

  \item[Case $1.1$:] $v\in\mathcal{X}\cup\mathcal X^{-1}$. For simplicity let us assume
    $v\in\mathcal{X}$; in the other case we can apply the $G$-homomorphism
    $v \mapsto v^{-1}$.
    Now there are two possibilities: either $v^{-1}$ occurs in $u$ or
    $v^{-1}$ occurs in $w$. In the first case $\eq{E}= u_1v^{-1}u_2X^{-1}vXw$, and
    then the $G$-isomorphism $X \mapsto X^{u_1}u_2$, $v \mapsto v^{u_1}$
    yields the equation $[v,X]u_1u_2w$. In the second case
    $\eq{E}= uX^{-1}vXw_1v^{-1}w_2$ is transformed to $[X,v]uw_1w_2$ by the
    $G$-isomorphism $X \mapsto X^{uw_1}w_1^{-1}$, $v\mapsto v^{-uw_1}$. In
    both cases $u_1u_2w$, respectively $uw_1w_2$ have fewer variables
    and so composition with the corresponding $G$-isomorphism results in a
    normal form.
  \item[Case $2$:] Length$(v)>1$. In this case $v$ is a word consisting of
    elements from $\mathcal X\cup \mathcal X^{-1}$ with each symbol occurring at most once as
    $v$ was chosen with minimal variable set, and some elements of
    $G$.  If $v$ starts with a constant $b\in G$ we use the
    $G$-homomorphism $X\mapsto bX$ to achieve that $v$ starts with a
    variable $Y\in\mathcal{X}$, possibly by using the $G$-homomorphism 
    $Y \mapsto Y^{-1}$. As in Case
    $1.1$ there are two possibilities: $Y^{-1}$ is either part of $u$
    or part of $w$. In the first case $\eq{E}= u_1 Y^{-1} u_2 X^{-1}Yv_1Xw$
    we can use the $G$-isomorphism $X\mapsto X^{u_1v_1}u_2$,
    $Y\mapsto Y^{u_1v_1}v_1^{-1}$ to obtain $[Y,X]u_1v_1u_2w$. In the
    second we use the $G$-isomorphism
    $X\mapsto X^{uw_1v_1}v_1^{-1}w_1^{-1}$,
    $Y\mapsto Y^{-uw_1v_1}v_1^{-1}$ to obtain $[X,Y]uw_1v_1w_2$. In
    both cases the second subword has again fewer variables and can be
    brought into normal form by induction.
  \end{itemize}
  Therefore each oriented equation can be brought to normal form by 
  $G$-iso\-mor\-phisms.

  In the unoriented case there is a variable $X\in\mathcal{X}$ such that
  $\eq{E} = uXvXw$. Choose $v$ to have a minimal number of variables.
  By induction, the shorter word
  $uv^{-1}w$ is equivalent by $\varphi$ to a normal form $r$.
 
  The $G$-isomorphism $\varphi \circ (X\mapsto X^uv^{-1})$ maps $\eq{E}$ to
  $X^2r$. If $r\in \eq{U}_{n,m}$ for some $n,m$,
  there remains nothing to do.  Otherwise $r=[Y,Z]s$, and then the
  $G$-homomorphism
  \begin{align*}
    X&\mapsto XYZ, & Y&\mapsto Z^{-1}Y^{-1}X^{-1}YZXYZ, & Z&\mapsto Z^{-1}Y^{-1}X^{-1}Z \\
  \intertext{maps $X^2r$ to $X^2Y^2Z^2s$. This homomorphism is indeed an
  isomorphism, with inverse}
    X&\mapsto X^2Y^{-1}X^{-1}, & Y&\mapsto XYX^{-1}Z^{-1}X^{-1}, & Z&\mapsto XZ.
  \end{align*}
  Note that $s \in \eq{O}_{n,m}$. If $n\geq 1$ then this procedure can be repeated with
  $Z,$ in place of $X,r$.
\end{proof}
For a quadratic equation $\eq{E}$ we denote by $\nf(\eq{E}) := \nf_{\eq E}(\eq{E})$
the image of $\eq{E}$ under the $G$-isomorphism $\nf_{\eq E}$ constructed in
the proof.

From now on we will consider oriented equations $\eq{O}_{n,1}$. For this
we will use the abbreviation
\[R_n(X_1,\dotsc,X_{2n})=\prod_{i=1}^n [X_{2i-1},X_{2i}]\]
and often write $R_n=R_n(X_1,\dotsc,X_{2n})$ if the $X_i$ are the
first generators of $F_{\mathcal{X}}$.

\subsection{Constrained equations}
\begin{defi}[Constrained equations~\cite{Lysenok-Miasnikov-Ushakov:QuadraticEquationsInGrig}]
  Given an equation $\eq{E} \in F_{\mathcal{X}}*G$, a group $H$ with a fixed
  homomorphism $\pi\colon G \to H$ and a homomorphism
  $\gamma\colon F_{\mathcal{X}} \to H$, the pair $(\eq{E},\gamma)$ is
  called a \emph{constrained} equation and $\gamma$ is called a
  \emph{constraint} for the equation $\eq{E}$ on $H$.
 
  A \emph{solution} for $(\eq{E},\gamma)$ is a solution $s$ for
  $\eq{E}$ with the additional property that $\pi\circ s=\gamma$.
\end{defi}
We note that the constraint $\gamma$ needs only to be specified on $\Var(\eq E)$.

\section{Self-similar groups}\label{sec:SelfSimilarGroups}
Let $T_n$ be the regular rooted $n$-ary tree and let $S_n$ be the symmetric group on $n$ symbols.
The group $\Aut(T_n)$ consists of all root-preserving graph automorphisms of the tree $T_n$. 

Let $T_{1,n},\dotsc,T_{n,n}$ be the subtrees hanging from neighbors of the root. 
Every $g\in\Aut(T_n)$ permutes the $T_{i,n}$ by a permutation $\sigma$ and simultaneously
acts on each of them by isomorphisms $g_i\colon T_{i,n}\to T_{i^\sigma,n}$.

Note that for all $i$ the tree $T_n$ is isomorphic to $T_{i,n}$;
identifying each $T_{i,n}$ with $T_n$, we identify each $g_i$ with an
element of $\Aut(T_n)$, and obtain in this manner an isomorphism
\[\Psi\colon\left\{\begin{array}{r@{\;}l}
              \Aut(T_n) &\xrightarrow{\sim} \Aut(T_n)\wr S_n\\
              g &\mapsto \pair{g_1,\dotsc,g_n}\sigma.
            \end{array}\right.
\]

A \emph{self-similar group} is a subgroup $G$ of $\Aut(T_n)$
satisfying $\Psi(G)\le G\wr S_n$.  For the sake of notation we will
identify elements with their image under this embedding and will write
$g=\pair{g_1,\dotsc,g_n}\sigma$ for elements $g\in G$.  Furthermore we
will call $g_i\in G$ the \emph{states} of the element $g$, will write
$g\at{i}:=g_i$ to address the states, will call $\sigma \in S_n$ the
\emph{activity} of the element $g$, and will write $\act(g):= \sigma$.

\subsection{Commutator width of \texorpdfstring{$\mathbf{\textup{Aut}(T_2)}$}{Aut(T2)} }
To give an idea of how the commutator width of Grigorchuk's group is
computed, we consider as an easier example the group $\Aut(T_2)$. In
this group we have the following useful property: for every two
elements $g,h\in \Aut(T_n)$ the element $\pair{g,h}$ is also a member
of the group.  This is only true up to finite index in the Grigorchuk
group and will produce extra complications there.

\begin{pro}\label{pro:comwidthAutT2}
 The commutator width of $\Aut(T_2)$ is $1$.
\end{pro}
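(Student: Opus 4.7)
The plan is to build, for every $g \in \Aut(T_2)'$, an explicit pair $x,y\in\Aut(T_2)$ with $[x,y]=g$, by a recursion descending through the levels of the binary tree.

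First I would identify the commutator subgroup. Using the wreath decomposition $\Aut(T_2)\cong\Aut(T_2)\wr S_2$ and a direct computation of commutators in the semidirect product $\Aut(T_2)^2\rtimes S_2$, one obtains
\[
g=\pair{g_1,g_2}\sigma \in \Aut(T_2)' \quad\Longleftrightarrow\quad \sigma=\id \text{ and } g_1 g_2 \in \Aut(T_2)'.
\]
In particular, $g_1$ and $g_2$ themselves need not lie in $\Aut(T_2)'$.

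The heart of the argument is the wreath-product identity
\[
\bigl[\pair{\id,u}\,a,\ \pair{v,g_1 v}\bigr] \;=\; \pair{g_1,\,[u,v]\,g_1^{-1}},
\]
valid for all $g_1,u,v\in\Aut(T_2)$, and verified by direct expansion using $a\pair{p,q}=\pair{q,p}a$ and $a^2=\id$. For $g=\pair{g_1,g_2}\in\Aut(T_2)'$ this identity reduces the task of writing $g$ as a single commutator to the smaller task of writing $g_2 g_1$ as a single commutator. Since $\Aut(T_2)'$ is normal in $\Aut(T_2)$, $g_2 g_1$ lies again in $\Aut(T_2)'$ whenever $g_1 g_2$ does, so the reduction stays inside the commutator subgroup.

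I would then formalize the reduction as an induction on the level of the tree, passing to the finite quotient of $\Aut(T_2)$ modulo the stabilizer of the $n$-th level. The base case $n=1$ gives the abelian group $S_2$, whose commutator subgroup is trivial; the inductive step is precisely the identity above applied in the quotient. A standard compactness argument over the compatible tower of finite quotients then produces $x,y\in\Aut(T_2)$ with $[x,y]=g$ in the profinite limit.

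The main obstacle is finding the correct ansatz. The form $x=\pair{\id,u}a$, $y=\pair{v,g_1 v}$ is designed so that the first state of $[x,y]$ collapses to $g_1$ independently of $u,v$, leaving the single recursive condition $[u,v]=g_2 g_1$. Once this ansatz is in hand, both the identity and the induction are mechanical. The analogous step for the Grigorchuk group will be substantially more delicate, because one cannot freely form $\pair{h_1,h_2}$ for arbitrary $h_i \in G$; this is the branching obstruction that motivates the constrained-equation machinery of the preceding sections.
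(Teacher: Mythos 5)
Your proposal is correct and follows essentially the same route as the paper: pass to states, reduce $[X,Y]g$ to a single commutator equation with constant $g\at{2}\cdot g\at{1}$, and iterate, closing with a limit/compactness argument in the profinite group $\Aut(T_2)$. Your explicit ansatz $x=\pair{\id,u}a$, $y=\pair{v,g_1v}$ packages in one identity what the paper derives via its normal-form machinery (the paper likewise fixes the free parameter by taking $Y_2=\id$), but the underlying recursion is the same.
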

For the proof we need a small observation:
\begin{lem}\label{lem:H'}
  Let $H$ be a self-similar group acting on a binary tree.
  If $g\in H'$ then $g\at{2}\cdot g\at{1}\in H'$. 
\end{lem}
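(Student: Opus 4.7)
The plan is to exploit that $S_2$ is abelian. Since the activity map $\act : H \to S_2$ is a homomorphism and $S_2$ is abelian, every commutator in $H$ has trivial activity; hence $H' \subseteq \ker(\act)$, and every $g \in H'$ decomposes as $\pair{g\at{1}, g\at{2}}$ with no permutation part. Moreover, on $\ker(\act)$, group multiplication coincides with componentwise multiplication in $H \times H$.

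I would then define $\mu \colon \ker(\act) \to H/H'$ by $\mu(\pair{h_1, h_2}) := h_1 h_2 \cdot H'$. Componentwise multiplication on $\ker(\act)$ combined with the abelianness of $H/H'$ (which allows swapping $h_2$ and $k_1$ in a product $\pair{h_1,h_2}\pair{k_1,k_2}$) makes $\mu$ a group homomorphism. Because its target is abelian, $\mu$ vanishes on $H'$, yielding $g\at{1} g\at{2} \in H'$ for every $g \in H'$, and hence $g\at{2} g\at{1} \in H'$ as well since the two products agree modulo $H'$.

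The argument is essentially a tautology once one sets it up, and the only point worth underlining is the abelianness of $S_2$: it both forces $H' \subseteq \ker(\act)$ and produces the identification $(H \wr S_2)^{\mathrm{ab}} \cong H^{\mathrm{ab}} \times \mathbb{Z}/2$, so that composing $\Psi$ with abelianization lands in an abelian group and therefore kills $H'$. This is why the binary tree hypothesis is essential; the analogous statement does not extend verbatim to $S_n$ for $n \geq 3$, where one must first pass to the finite-index subgroup $\ker(\act)$ before an analogous map can be written down.
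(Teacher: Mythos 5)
There is a genuine gap in the final step. You define $\mu$ only on $\ker(\act)$, and the observation that the target $H/H'$ is abelian yields that $\mu$ vanishes on $(\ker\act)'$ — \emph{not} on $H'$. These are different subgroups in general: for example with $H=\Aut(T_2)$ one has $\ker\act = H\times H$ and $(\ker\act)' = H'\times H'$, whereas $H'$ also contains elements such as $[\pair{\id,\id}(1,2),\,\pair{\id,g}] = \pair{g,g^{-1}}$ for arbitrary $g\in H$, which need not lie in $H'\times H'$. So "target abelian $\Rightarrow$ $\mu$ kills $H'$" does not follow; you have only shown $\mu$ kills the commutators of elements that themselves lie in $\ker\act$, which is not enough to treat a general generator $[a,b]$ of $H'$ when $a$ or $b$ is active.

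The fix is small and salvages your approach: extend $\mu$ to all of $H$ by $\mu(\pair{h_1,h_2}\sigma):=h_1h_2\,H'$. The same abelianness-of-$H/H'$ computation shows this is still a homomorphism (the permutation $\sigma$ merely reorders the two factors of the second operand, which is invisible in $H/H'$), and now $\mu\colon H\to H/H'$ with abelian target genuinely kills all of $H'$. Note, incidentally, that this extended map works for any arity $n$, so the role of "$S_2$ abelian" in your last paragraph is only to guarantee $H'\subseteq\ker\act$ so that $g\at1,g\at2$ make sense for $g\in H'$, not to make $\mu$ definable. For comparison, the paper's proof bypasses any abstract homomorphism: it reduces to a single commutator $g=[g_1,g_2]$ and checks by hand that $g\at2\cdot g\at1$ is a product, in some order, of all eight letters $(g_i\at j)^{\pm1}$, hence trivial in $H^{\mathrm{ab}}$ — essentially doing explicitly the verification that your extended $\mu$ kills commutators with active entries.
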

\begin{proof}
  It suffices to consider a commutator $g=[g_1,g_2]$ in $H'$. Then
  $g\at 2\cdot g\at 1$ is the product, in some order, of all eight
  terms $(g_i\at j)^\epsilon$ for all $i,j\in\{1,2\}$ and
  $\epsilon\in\{\pm1\}$.
\end{proof}

%

\begin{proof}[Proof of Proposition~\ref{pro:comwidthAutT2}]
  Given any element $g\in \Aut(T_2)'$ we consider the equation
  $[X,Y]g$.  If in it we replace the variable $X$ by $\pair{X_1,X_2}$
  and $Y$ by $\pair{Y_1,Y_2}(1,2)$ we obtain
  $\pair{X_1^{-1}Y_2^{-1}X_2Y_2(g\at{1}),X_2^{-1}Y_1^{-1}X_1Y_1(g\at{2})}$.
  Therefore, $[X,Y]g$ is solvable if the system of equations
  $\{X_1^{-1}Y_2^{-1}X_2Y_2(g\at{1}),X_2^{-1}Y_1^{-1}X_1Y_1(g\at{2})\}$
  is solvable.  We apply the $\Aut(T_2)$-homomorphism
  $X_1\mapsto X_1,X_2\mapsto Y_1^{-1}X_1Y_1 (g\at{2}),Y_i\mapsto Y_i$
  to eliminate one equation and one variable.
 
  Thus the solvability of the constrained equation
  $\left([X,Y]g, (X\mapsto\id,Y\mapsto (1,2)) \right)$ follows from
  the solvability of
  $X_1^{-1}Y_2^{-1}Y_1^{-1}X_1Y_1(g\at{2}) Y_2 (g\at{1})$ which is
  under the normal form $\Aut(T_2)$-isomorphism
  $Y_1\mapsto Y_1Y_2^{-1}$ equivalent to the solvability of
  $[X_1,Y_1](g\at{2})^{Y_2} g\at{1}$. After choosing $Y_2=\id$ we are
  again in the original situation since $g\at{2} g\at{1}\in H'$.
 
  This allows us to recursively define a solution $s$ for the equation
  $[X,Y]g$ as follows:
  \begin{align*}
    s(X) &= \pair{a_1,b_1^{-1}a_1b_1 g\at{2} },
    & s(Y) &= \pair{b_1,\id}(1,2), &c_1&=g\at{2} \cdot g\at{1}, \\
\intertext{and for all $i\geq 1$}
    a_{i} &= \pair{a_{i+1},b_{i+1}^{-1}a_{i+1}b_{i+1} c_i\at{2} }, & b_{i} &=\pair{b_{i+1},\id}(1,2),&c_{i+1}&=c_{i}\at{2} \cdot c_{i}\at{1}. 
  \end{align*}
  Note that the elements $a_i,b_i\in\Aut(T_2)$ are well-defined,
  although they are constructed recursively out of the $a_j,b_j$ for
  \emph{larger} $j$. Indeed, if one considers the recursions above for
  $i\in\{1,\dots,n\}$ and sets $a_{n+1}=b_{n+1}=\id$, one defines in
  this manner elements $a_1^{(n)},b_1^{(n)}\in\Aut(T_2)$ which form
  Cauchy sequences, and therefore have well-defined limits
  $a_1=\lim a_1^{(n)}$ and $b_1=\lim b_1^{(n)}$.
\end{proof}

\section{The first Grigorchuk Group}\label{sec:GrigorchukGroup}
The first Grigorchuk group~\cite{Grigorchuk:Burnside} is a finitely
generated self-similar group acting faithfully on the binary rooted
tree, with generators
\[a=\pair{\id,\id}(1,2),\quad b=\pair{a,c},\quad c=\pair{a,d},\quad d=\pair{\id,b}. \]

Some useful identities are
\begin{gather*}
  a^2=b^2=c^2=d^2=bcd=\id,\\
  b^a= \pair{c,a}, c^a=\pair{d,a}, d^a=\pair{b,\id},\\
  (ad)^4=(ac)^8=(ab)^{16}= \id.
\end{gather*}
\begin{defi}[Regular branched group]
 A self-similar group $\Gamma$ is called \emph{regular branched} if it
 has a finite-index subgroup $K\leq \Gamma$ such that $K^{\times n} \leq \Psi(K)$.
\end{defi}
\begin{lem}[\cite{Rozhkov:Centralizers}]\label{lem:subgroupK}
  The Grigorchuk group is regular branched with branching subgroup 
  \[K:= \left<(ab)^2\right>^G=\left< (ab)^2,(bada)^2,(abad)^2
    \right>.
  \]
  The quotient $Q := \RNK{G}{K}$ has order $16$.\qed
\end{lem}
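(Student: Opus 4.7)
The plan is threefold: (a) produce the branching seeds by direct computation of state decompositions, (b) derive $K\times K\subseteq\Psi(K)$ from those seeds and the normality of $K$, and (c) show the three-generator subgroup coincides with the normal closure and has index $16$.

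For (a), using the defining decompositions $a=\pair{\id,\id}(1,2)$, $b=\pair{a,c}$, $c=\pair{a,d}$, $d=\pair{\id,b}$ together with the wreath-product multiplication law, I would compute
\[
ab=\pair{c,a}(1,2),\quad bada=\pair{ab,c},\quad abad=\pair{c,ab},
\]
which yield $(ab)^2=\pair{ca,ac}$, $(bada)^2=\pair{(ab)^2,\id}$, and $(abad)^2=\pair{\id,(ab)^2}$. The last two identities are the key: they exhibit, inside $K$, elements whose $\Psi$-images are the branching seeds $\pair{(ab)^2,\id}$ and $\pair{\id,(ab)^2}$.

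For (b), since $K$ is normal in $G$, conjugating $(bada)^2=\pair{(ab)^2,\id}\in K$ by an arbitrary element $\pair{g_1,g_2}$ of the first-level stabilizer yields $\pair{((ab)^2)^{g_1},\id}\in K$. The two projections from the first-level stabilizer onto $G$ are surjective (already the decompositions $b=\pair{a,c}$, $c=\pair{a,d}$, $d=\pair{\id,b}$ realize all four Grigorchuk generators as first-level states), so as $g_1$ ranges over $G$ we obtain $\pair{K,\id}\subseteq\Psi(K)$. The symmetric argument with $(abad)^2$ gives $\pair{\id,K}\subseteq\Psi(K)$, whence $K\times K\subseteq\Psi(K)$.

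For (c), set $L=\langle(ab)^2,(bada)^2,(abad)^2\rangle$. To prove $L\subseteq K$ I would write each of $(bada)^2$ and $(abad)^2$ as an explicit product of $G$-conjugates of $(ab)^{\pm 2}$. For $L\supseteq K$, I would verify $L\triangleleft G$ by computing the twelve conjugates of the three generators by $a,b,c,d$ and rewriting each as a word in the three generators using the identities $a^2=b^2=c^2=d^2=bcd=\id$ and the torsion relations $(ad)^4=(ac)^8=(ab)^{16}=\id$. Normality then forces $L\supseteq\langle(ab)^2\rangle^G=K$. The index $[G:K]=16$ is obtained by adjoining $(ab)^2=\id$ to the defining presentation of $G$ and performing a Todd--Coxeter coset enumeration, which terminates with $16$ cosets and identifies $G/K$ with a familiar $2$-group of order $16$. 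The main obstacle is the combinatorial bookkeeping in (c): the twelve conjugate rewritings and the coset enumeration are tedious by hand, but entirely algorithmic and well suited to the GAP package accompanying the paper. Everything else reduces to direct state-decomposition computations and standard facts about normal closures.
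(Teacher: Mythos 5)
The paper itself gives no proof of this lemma---it is imported from Rozhkov---so the proposal must be judged on its own merits. Parts~(a) and~(b) are correct. The state computations $ab=\pair{c,a}(1,2)$, $(ab)^2=\pair{ca,ac}$, $(bada)^2=\pair{(ab)^2,\id}$, $(abad)^2=\pair{\id,(ab)^2}$ all check against the wreath-recursion. The conjugation argument in~(b) (normality of $K$ plus surjectivity of the two coordinate projections from $\Stab_G(1)$) does give $K\times K\subseteq\Psi(K)$; but note it presupposes $(bada)^2,(abad)^2\in K$, which is exactly half of what~(c) is asked to deliver, so~(c) must be established first and without appealing to~(b). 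That ordering is repairable, but as written it masks the fact that showing $(bada)^2,(abad)^2\in K$ is the only nontrivial part of $L\subseteq K$.

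The real gap is the Todd--Coxeter step. The Grigorchuk group is \emph{not} finitely presented; "the defining presentation of $G$" is infinite (Lysenok's $L$-presentation). Coset enumeration requires a finite presentation, so what you would actually be enumerating is the finitely presented group $\bar G=\langle a,b,c,d\mid a^2,b^2,c^2,d^2,bcd,(ad)^4,(ac)^8,(ab)^{16}\rangle$ (or some finite truncation) modulo the normal closure $\bar K$ of $(ab)^2$. Since $\bar G$ only \emph{surjects} onto $G$, this yields $[G:K]\le 16$, not equality. A lower bound argument is missing: one needs, say, an explicit surjection $G\to H$ with $|H|=16$ and $K\subseteq\ker$, or to compute the image of $K$ in a sufficiently deep level quotient $G/\Stab_G(n)$ and check its index there is already $16$. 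A cleaner organization of~(c) would be: first prove $L\trianglelefteq G$ by conjugating generators (a finite check), which gives $K\subseteq L$; then establish a lower bound $[G:L]\ge16$ by mapping onto a concrete order-$16$ quotient; combined with the upper bound $[G:K]\le 16$ and $K\subseteq L$ this forces $L=K$ and $[G:K]=16$ simultaneously, and sidesteps the need to exhibit $(bada)^2$ and $(abad)^2$ as words in $G$-conjugates of $(ab)^{\pm2}$, which is feasible but far from the "direct rewriting" the proposal suggests.
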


For an equation $\eq E\in F_{\mathcal X}*G$, recall that
$\Stab(\eq E)$ denotes the group of $G$-auto\-mor\-phisms of $\eq E$.

Denote by $U_n$ the subgroup of $\Stab(R_n)$ generated by the
following automorphisms of $F_{2n}$:
\[\begin{array}{rr@{\;}ll}
    \varphi_i\colon& X_i&\mapsto X_{i-1}X_i, \textup{ others fixed} & \textup{for }i=2,4,\dotsc,2n,\\[2ex]
    \varphi_i\colon& X_i&\mapsto X_{i+1}X_i, \textup{ others fixed} & \textup{for }i=1,3,\dotsc,2n-1,\\[2ex]
    \multirow{5}{*}{$\psi_i\colon$\hbox to 0pt{$\left\{\rule{0mm}{13mm}\right.$}}& X_i&\mapsto X_{i+1}X_{i+2}^{-1}X_i, & \multirow{5}{*}{for $i=1,3,\dotsc,2n-3$.}\\[1ex]
                   & X_{i+1}&\mapsto X_{i+1}X_{i+2}^{-1}X_{i+1}X_{i+2}X_{i+1}^{-1},\\[1ex]
                   & X_{i+2}&\mapsto X_{i+1}X_{i+2}^{-1}X_{i+2}X_{i+2}X_{i+1}^{-1},\\[1ex]
                   & X_{i+3}&\mapsto X_{i+1}X_{i+2}^{-1}X_{i+3}, \textup{ others fixed}
\end{array}\]

\begin{re}
  In fact, we have $U_n = \Stab(R_n)$ though formally we do not need
  the equality.  Due to classical results of Dehn--Nielsen,
  $\Stab(R_n)$ is isomorphic to the mapping class groups $M(n,0)$ of
  the closed orientable surface of genus $n$. It can be checked that
  the automorphisms $\varphi_i$ and $\psi_i$ represent the Humphries
  generators of $M(n,0)$.  For details on mapping class groups, see
  for example~\cite{Farb-Margalit:MCG}.
\end{re}

\begin{lem}[\cite{Lysenok-Miasnikov-Ushakov:QuadraticEquationsInGrig}]\label{lem:finitelyManyConstraints}
  Given $n\in \NN$ and a homomorphism
  $\gamma\colon F_{\mathcal{X}} \to Q$ with
  $\supp(\gamma) \subset \left<X_1,\ldots ,X_{2n}\right>$ there is an
  element $\varphi \in U_n<\Aut(F_{\mathcal{X}})$ such that
  $\supp(\gamma\circ\varphi) \in \left<X_1,\dotsc,X_5\right>$.\qed
\end{lem}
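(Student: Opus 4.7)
The plan is to prove the lemma by induction on $n$. For $n \le 2$ there is nothing to show since $\{X_1,\dots,X_{2n}\}\subseteq\{X_1,\dots,X_5\}$. For $n \ge 3$, I would aim to trivialize $\gamma(X_{2n-1})$ and $\gamma(X_{2n})$ while absorbing their content into the first $2n-2$ variables by precomposition with elements of the subgroup $\langle \varphi_{2n-3},\varphi_{2n-2},\varphi_{2n-1},\varphi_{2n},\psi_{2n-3}\rangle \subseteq U_n$; one is then left with a constraint on $F_{2n-2}$, to which the inductive hypothesis applies, noting that the generators of $U_{n-1}$ embed canonically in $U_n$ and act only on the first $2n-2$ variables.

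To trivialize the last pair, I would first observe that $\varphi_{2n-1}$ and $\varphi_{2n}$ act on $(q_{2n-1},q_{2n}):=(\gamma(X_{2n-1}),\gamma(X_{2n})) \in Q^2$ by elementary Nielsen moves analogous to the action of $\mathrm{SL}_2(\ZZ)$ on $\ZZ^2$, and these alone already collapse any pair with a trivial component to a single entry. The automorphism $\psi_{2n-3}$ then mixes the last pair with its predecessor $(X_{2n-3},X_{2n-2})$; under the Dehn--Nielsen identification $\Stab(R_n)\cong M(n,0)$ it corresponds to a handle-exchange on the surface. Because $U_n$ preserves $R_n$, the induced action on $(Q^{\mathrm{ab}})^{2n}$ factors through a symplectic group relative to the pairing coming from the commutator structure, and by the standard transitivity of symplectic groups on nondegenerate pairs one can clear the abelianization of $(q_{2n-1},q_{2n})$ using these moves.

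The hard part is trivializing the remaining commutator residue, which once the abelianization has been killed lives in $[Q,Q]$: here the clean symplectic argument no longer applies and one really must use that $Q$ is a concrete finite group rather than a general 2-group. The main obstacle is therefore verifying that the combined subgroup above is rich enough to clear this nonabelian residue by suitably shuffling values through the four-tuple $(q_{2n-3},q_{2n-2},q_{2n-1},q_{2n})\in Q^4$. Since $|Q^4|=16^4$ is finite, this reduces to a bounded case analysis: enumerate the orbits of the relevant subgroup acting on $Q^4$ and check that each contains a representative with $q_{2n-1}=q_{2n}=\id$. This is exactly the kind of finite verification that the computer-algebra setup developed elsewhere in the paper is tailored to dispatch, and it simultaneously handles the base case $n=3$ (where the only remaining variable to kill is $X_6$).
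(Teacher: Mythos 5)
The paper itself does not prove this lemma; it cites~\cite{Lysenok-Miasnikov-Ushakov:QuadraticEquationsInGrig} and only comments on the constructive nature of that proof in Section~\ref{sec:gap_details}. So I assess your proposal on its own merits.

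There is a genuine gap in your inductive step, and the paper's own orbit counts rule it out. For $n \ge 3$ you propose to trivialize \emph{both} $\gamma(X_{2n-1})$ and $\gamma(X_{2n})$ using only $\langle\varphi_{2n-3},\varphi_{2n-2},\varphi_{2n-1},\varphi_{2n},\psi_{2n-3}\rangle$, a subgroup acting on the four-tuple $(q_{2n-3},\dots,q_{2n})$ which, after an index shift, is exactly $U_2 = \langle\varphi_1,\dots,\varphi_4,\psi_1\rangle$ acting on $Q^4$. Your proposed finite check---that every $U_2$-orbit on $Q^4$ contains a representative with trivial last two entries---cannot pass. If it did, then for any $(q_1,\dots,q_6)\in Q^6$ one could first apply the shifted copy $\langle\varphi_3,\dots,\varphi_6,\psi_3\rangle < U_3$ (which fixes $q_1,q_2$) to force $q_5=q_6=\id$, and then apply $U_2 < U_3$ to $(q_1,\dots,q_4)$, landing on a representative of $\RNK{Q^4}{U_2}$; this would give $\left|\RNK{Q^6}{U_3}\right| \le \left|\RNK{Q^4}{U_2}\right|$. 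But the paper computes $\left|\RNK{Q^4}{U_2}\right| = 86$ while $\left|\RNK{Q^6}{U_3}\right| = 90$, a contradiction. The threshold of five variables in the lemma is therefore sharp, and is precisely the statement that one cannot always clear a full pair using moves supported on a single block of four consecutive coordinates: a nontrivial residue must be allowed to land in $X_5$. Your own remark that for $n=3$ ``the only remaining variable to kill is $X_6$'' already hints at this asymmetry, but the two-at-a-time recursion for $n\ge4$ cannot be salvaged without more global moves---indeed the paper's implementation introduces pair-transposition automorphisms $s_i$ precisely so the residue can be shuttled toward low indices rather than annihilated in place. (The symplectic heuristic is also weaker than stated, since transitivity holds only on primitive vectors and $Q^{\mathrm{ab}}\cong(\ZZ/2)^3$ is not cyclic, but the orbit-count contradiction is already decisive.)
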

\begin{lem}\label{lem:90Constraints}
 Identify the set $\{\gamma \colon F_{\mathcal{X}} \to Q \mid \supp(\gamma)
 \subset \left<X_1,\ldots, X_n\right>\}$ with $Q^{n}$. 
 Then 
 \[\left|\RNK{Q^{2n}}{U_n}\right|\leq 90 \text{ for all } n \ge 3. \]
\end{lem}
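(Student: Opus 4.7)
The plan is to reduce the infinite family of bounds to a finite computation via a monotonicity argument, and then to carry out that computation directly. The key inequality is $|Q^{2(n+1)}/U_{n+1}|\le |Q^{2n}/U_n|$ for $n\ge 3$; combined with direct enumeration for $n\in\{1,2,3\}$, this suffices to bound $|Q^{2n}/U_n|$ uniformly.

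For the monotonicity step, I would start by observing that, under the natural embedding $F_{2n}\hookrightarrow F_{2(n+1)}$ fixing $X_{2n+1},X_{2n+2}$, the defining formulas make it immediate that every generator $\varphi_i,\psi_i$ of $U_n$ is also a generator of $U_{n+1}$ (the index ranges $i\le 2n$ for $\varphi_i$ and $i\le 2n-3$ for $\psi_i$ nest consistently inside those for $U_{n+1}$), so $U_n\le U_{n+1}$. Dualising gives a $U_n$-equivariant inclusion $Q^{2n}\hookrightarrow Q^{2(n+1)}$ by padding a tuple with two trivial entries, which descends to a well-defined map $\pi\colon Q^{2n}/U_n\to Q^{2(n+1)}/U_{n+1}$. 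For $n\ge 3$ one has $2n\ge 5$, so Lemma~\ref{lem:finitelyManyConstraints} yields, for any $\gamma\in Q^{2(n+1)}$, some $\varphi\in U_{n+1}$ with $\supp(\gamma\circ\varphi)\subseteq \langle X_1,\dotsc,X_5\rangle\subseteq \langle X_1,\dotsc,X_{2n}\rangle$; hence $[\gamma]$ lies in the image of $\pi$, proving surjectivity and the inequality.

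It then remains to verify $|Q^{2n}/U_n|\le 90$ for $n\in\{1,2,3\}$. Each is a finite orbit-counting problem: $Q^{2n}$ has at most $16^6\approx 1.7\cdot 10^7$ elements, and $U_n$ acts through its image in $\Sym(Q^{2n})$ generated by the explicitly given $\varphi_i,\psi_i$. Consistent with the paper's overall computational strategy I would implement this in GAP: realise $Q=G/K$ as an explicit group of order $16$, encode each generator of $U_n$ as a permutation of $Q^{2n}$ via its action on homomorphisms, and enumerate the orbits by the standard incremental algorithm.

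The main obstacle is practical rather than conceptual: at $n=3$ the underlying set of size $\sim 10^7$ is large enough that one must grow orbits one seed at a time from a running stack rather than constructing full permutation representations in memory, otherwise the computation risks exhausting resources. A minor secondary check is verifying that the index-shift conventions in the definitions of $\varphi_i,\psi_i$ do exhibit $U_n$ as a subgroup of $U_{n+1}$ with the expected generators; this is immediate from the index bounds, and then the equivariance of the inclusion $Q^{2n}\hookrightarrow Q^{2(n+1)}$ follows automatically.
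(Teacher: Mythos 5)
Your proof is correct and follows essentially the same route as the paper's: observe $U_n\le U_{n+1}$, use Lemma~\ref{lem:finitelyManyConstraints} to see that every $U_{n+1}$-orbit on $Q^{2(n+1)}$ meets $Q^5\subset Q^{2n}$ (giving the monotonicity $|Q^{2(n+1)}/U_{n+1}|\le|Q^{2n}/U_n|$ for $n\ge3$, which the paper phrases equivalently as being able to nest representative systems $\Red_{n+1}\subset\Red_n$), and then delegate $|Q^6/U_3|=90$ to a GAP computation. The one place you are slightly more careful than the paper is in noting that the monotonicity argument only kicks in at $n\ge3$, so that the literal ``for all $n$'' in the statement also requires a direct look at $n=1,2$; the paper silently restricts attention to $n\ge3$ (which is all it uses later, cf.\ the Notation following the lemma), so this is a small tightening rather than a divergence of method.
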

\begin{proof}
  Note that according to our identification we have $Q^m \subset Q^n$
  for $m < n$.  By Lemma~\ref{lem:finitelyManyConstraints} every orbit
  $Q^{2n} / U_n$ has a representative in $Q^5$.  
Then $|Q^{2n}/U_n| = |Q^5/U_n|$ and since $U_n \subset U_{n+1}$ we have
$|G^{2n+2}/U_{n+1}| \le |G^{2n}/U_n|$. Direct computation gives $|G^6/U_3| = 90$, see Section~\ref{sec:gap_details}.

\end{proof}
\begin{re}
  It can be proved by an extra computation that indeed
  $\left|\RNK{Q^{2n}}{U_n}\right|= 90$ for all
  $n\geq3$.
\end{re}

\begin{nota}[$\Red$, reduced constraint]
  Lemmas~\ref{lem:finitelyManyConstraints} and~\ref{lem:90Constraints}
  imply that there is a set of $90$ homomorphisms
  $\gamma\colon F_{\mathcal{X}} \to Q$ with
  $\supp(\gamma) \subset \left<X_1,\ldots, X_5\right>$ that is a
  representative system of the orbits $Q^{2n} / U_n$ for each
  $n \ge 3$.  
  Note that representatives are formally not assumed unique if $n \ge 4$
  (though in fact they are unique according to the remark above).
  Fix such a set $\Red$ and for
  $\gamma \colon F_{\mathcal{X}} \to Q$ with finite support (say
  $X_1,\dots,X_{2n}$) denote by $\varphi_\gamma$ the $G$-automorphism
  in $U_n$ such that $\gamma\circ \varphi_\gamma \in \Red$.
 
  The element $\gamma \circ \varphi_\gamma$ will be called a
  \emph{reduced constraint}, denoted $\red(\gamma)$.
We extend the function $\red(*)$ also to the case when $\gamma: F_{\mathcal{S}} \to Q$
is a homomorphism defined on any finite subset $\mathcal{S}$ of variables from $\mathcal{X}$:
we simply extend $\gamma$ onto $F_{\mathcal{X}}$ by defining $\gamma(X) = \id$ for $X \notin \mathcal{S}$
and then take $\red(\gamma)$ as already defined.

\begin{re}
Considering finitely supported constraints defined on an infinite set of variables $\mathcal X$ is a convenient trick
that allows us to compare constraints intended for equations with different number of variables. 
In particular, we will assert in Section~\ref{sec:succ_pairs} that certain sets of constraints are
independent on the number of variables of an equation.
\end{re}

\end{nota}

\begin{lem} \label{lem:solvabilityWithReducedConstraint} The
  solvability of a constrained equation $(R_n g,\gamma)$ is equivalent
  to the solvability of $(R_n g,\gamma\circ \varphi_\gamma)$.
\end{lem}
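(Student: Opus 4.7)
The plan is to observe that this is essentially a formal verification once we unpack definitions: the key facts are that $\varphi_\gamma\in U_n\leq\Stab(R_n)$ (so it fixes $R_n$), that as a $G$-automorphism it fixes the constant $g\in G$, and that it restricts to an automorphism of $F_\mathcal{X}$ (as $U_n$ is defined through $\Aut(F_{2n})$ extended to the identity on $G$). Composition with $\varphi_\gamma$ or its inverse will shuttle solutions between the two constrained equations.

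More precisely, I would argue as follows. Suppose first that $s\colon F_\mathcal{X}*G\to G$ is a solution of $(R_n g,\gamma)$, i.e.\ $s$ is a $G$-homomorphism with $s(R_n g)=\id$ and $\pi\circ s\bigl|_{F_\mathcal{X}}=\gamma$. Set $s':=s\circ\varphi_\gamma$. Since both $s$ and $\varphi_\gamma$ are the identity on the marked copy of $G$, $s'$ is again a $G$-homomorphism, hence a valid evaluation. Then
\[
s'(R_n g)=s\bigl(\varphi_\gamma(R_n)\,\varphi_\gamma(g)\bigr)=s(R_n g)=\id,
\]
using $\varphi_\gamma(R_n)=R_n$ (as $\varphi_\gamma\in\Stab(R_n)$) and $\varphi_\gamma(g)=g$. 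For the constraint, since $\varphi_\gamma$ maps $F_\mathcal{X}$ into itself,
\[
\pi\circ s'\bigl|_{F_\mathcal{X}}=(\pi\circ s\bigl|_{F_\mathcal{X}})\circ\varphi_\gamma\bigl|_{F_\mathcal{X}}=\gamma\circ\varphi_\gamma,
\]
so $s'$ solves $(R_n g,\gamma\circ\varphi_\gamma)$.

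For the converse direction, I would run the same argument with $\varphi_\gamma^{-1}$ in place of $\varphi_\gamma$: if $s'$ solves $(R_n g,\gamma\circ\varphi_\gamma)$, then $s:=s'\circ\varphi_\gamma^{-1}$ is a $G$-homomorphism, satisfies $s(R_n g)=s'(R_n g)=\id$ since $\varphi_\gamma^{-1}\in U_n\leq\Stab(R_n)$, and fulfils $\pi\circ s\bigl|_{F_\mathcal{X}}=(\gamma\circ\varphi_\gamma)\circ\varphi_\gamma^{-1}=\gamma$, giving a solution of $(R_n g,\gamma)$.

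There is really no obstacle to overcome here; the only point that needs a moment's care is that $\varphi_\gamma$, initially defined as an element of $\Aut(F_{2n})$, is being used as a $G$-automorphism of $F_\mathcal{X}*G$ by extending it via the identity on $G$ and on variables outside $\{X_1,\dots,X_{2n}\}$. Once this convention is fixed, all three ingredients $\varphi_\gamma(R_n)=R_n$, $\varphi_\gamma(g)=g$ and $\varphi_\gamma(F_\mathcal{X})=F_\mathcal{X}$ hold by construction, and the bijection between solutions is immediate.
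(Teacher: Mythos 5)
Your proof is correct and takes essentially the same approach as the paper, which simply notes that if $s$ solves $(R_n g,\gamma)$ then $s\circ\varphi_\gamma$ solves $(R_n g,\gamma\circ\varphi_\gamma)$. You have spelled out the needed facts ($\varphi_\gamma\in\Stab(R_n)$, $\varphi_\gamma$ is a $G$-automorphism preserving $F_\mathcal{X}$) and supplied the converse direction via $\varphi_\gamma^{-1}$, which the paper leaves implicit.
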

\begin{proof}
  If $s$ is a solution for $(R_n g,\gamma)$ then
  $s\circ \varphi_\gamma$ is a solution for
  $(R_n g,\gamma\circ \varphi_\gamma)$ and vice versa.
\end{proof}

\begin{defi}[Branch structure~\cite{Bartholdi:RepresentationZetaFunctions}] 
  A \emph{branch structure} for a group $G\hookrightarrow G \wr S_n$
  consists of
  \begin{enumerate}
  \item a branching subgroup $K\normal G$ of finite index;
  \item the corresponding quotient $Q=\RNK{G}{K}$ and the factor homomorphism $\pi\colon G \to Q$;
  \item a group $Q_1 \subset Q \wr S_n$ such that
    $\pair{q_1,\ldots,q_n}\sigma\in Q_1$ if and only if
    $\pair{g_1,\ldots,g_n}\sigma \in G$ for all
    $g_i \in \pi^{-1}(q_i)$;
  \item a map $\omega\colon Q_1 \to Q$ with the following property: if
    $g=\pair{g_1,\ldots,g_n}\sigma \in G$ then
    $\omega(\pair{\pi(g_1),\ldots,\pi(g_n)}\sigma) = \pi(g)$.
\end{enumerate}
\end{defi}
All regular branched groups have a branch structure (see~\cite[Remark
  after Definition~5.1]{Bartholdi:RepresentationZetaFunctions}).  We
will from now on fix such a structure for $G$ and take the group $K$
defined in Lemma~\ref{lem:subgroupK} as branching subgroup and denote
by $Q$ the factor group with natural homomorphism
$\pi\colon G \to \RNK{G}{K}=Q$.

\begin{re}
  The branch structure of $G$ is included in the FR package and can be
  computed by the method \gapinline{BranchStructure(GrigorchukGroup)}.
\end{re}

\subsection{Good Pairs}\label{sec:good_pairs}
It is not true that for every $g\in G'$ and every constraint $\gamma$
there is an $n\in\NN$ such that the constrained equation
$(R_ng,\gamma)$ is solvable. For example
\[\left(R_n(ab)^2,(\gamma\colon X_i\mapsto\id\;\forall i)\right)\] 
is not solvable for any $n$ because $(ab)^2\notin K'$.  This motivates
the following definition.
\begin{defi}[Good pair]
Given $g\in G'$ and $\gamma\in\Red$, the tuple $(g,\gamma)$ is called a \emph{good pair} if 
$(R_ng,\gamma)$ is solvable for some $n\in\NN$.  
\end{defi}

\begin{lem}
  For $g\in G$, let $\overline g$ denote the image of $G$ in
  $\RNK{G}{K'}$. Then the pair $(g,\gamma)$ is a good pair if and only
  if $(R_3 \overline g,\gamma)$ has a solution in $\RNK{G}{K'}$.
\end{lem}
\begin{proof}
  If $(g,\gamma)$ is a good pair and $s$ a solution for
  $(R_ng,\gamma)$ then $s(X_i)\in K$ for all $i\geq6$, so
  $s(R_n g) = s(R_3) \cdot k g$ for some $k\in K'$.  Therefore
  $\overline{s(R_3)}\overline{k}\overline{g}=\overline{s(R_3)}\overline{g}=\id$,
  and $(R_3\overline g,\gamma)$ has a solution
  $\overline s\colon X_i\mapsto\overline{s(X_i)}$. Clearly
  $\overline s$ satisfies the constraint $\gamma$, since
  $G\twoheadrightarrow Q$ factorizes through $\RNK{G}{K'}$.
 
  Now suppose that $(R_3\overline g,\gamma)$ has a solution
  $\overline s\colon F_{\mathcal{X}} \to \RNK{G}{K'}$, so we have
  $\overline s(R_3\overline g)=\overline s(R_3)\overline g=\id$ in
  $\RNK{G}{K'}$. There are then $k\in K'$ and $g_1,\dots,g_6\in G$
  with $R_3(g_1,\dots,g_6)k g=\id$, so $(g,\gamma)$ is a good pair.
\end{proof}

The previous lemma shows that the question whether $(g,\gamma)$ is a
good pair depends only on the image of $g$ in $\RNK{G}{K'}$. For
$q\in Q$, we call $(q,\gamma)$ a \emph{good pair} if $(g,\gamma)$ is a
good pair for one (and hence all) preimages of $q$ under $G\twoheadrightarrow\RNK{G}{K'}$.
\begin{cor}\label{cor:finiteCommutatorWidthKimpliesBoundedConstraintedCommutators}
  The following are equivalent:
  \begin{enumerate}[(a)]
  \item $K$ has finite commutator width; \label{Cor:EqStatement1}
  \item there is an $n\in\NN$ such that $(R_n g,\gamma)$ is solvable 
    for all good pairs $(g,\gamma)$ with $g \in G'$ and $\gamma\in\Red$.
    \label{Cor:EqStatement2}
  \end{enumerate} 
\end{cor}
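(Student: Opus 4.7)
My plan is to prove the two directions separately, using the finiteness of $\Red$ (Lemma~\ref{lem:90Constraints}) together with the observation, just preceding the corollary, that good-pair status depends only on the image of $g$ modulo $K'$.

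For (b)$\Rightarrow$(a), fix $k\in K'$ and consider the pair $(k,\gamma_0)$, where $\gamma_0$ is the trivial constraint sending every variable to $\id\in Q$. The trivial map is fixed by every element of $U_n$, hence $\gamma_0\in\Red$. As $k\in K'$, it is already a product of finitely many commutators of elements of $K$; assigning those constituents to the variables shows that $(R_j k,\gamma_0)$ is solvable for some $j$, so $(k,\gamma_0)$ is a good pair. Hypothesis (b) then yields a solution of $(R_n k,\gamma_0)$ whose variables all lie in $\pi^{-1}(\id)=K$, exhibiting $k$ as a product of $n$ commutators in $K$; hence the commutator width of $K$ is at most $n$.

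For (a)$\Rightarrow$(b), write $m$ for the commutator width of $K$. I would rely on two elementary moves applied to any solution $s$ of a constrained equation $(R_n g,\gamma)$ with $n\geq 3$ and $\gamma\in\Red$. First, a \emph{padding} move: setting $X_{2n+1}=X_{2n+2}=\id$ produces a solution of $(R_{n+1}g,\gamma)$, because the added commutator is trivial and $\supp(\gamma)\subseteq\{X_1,\dots,X_5\}$ makes the constraint on the new indices vacuous. Second, a \emph{transfer} move: if $g'=gk'$ with $k'\in K'$, append $m$ further variable pairs in $K$ realising $s(R_n)^{-1}k'^{-1}s(R_n)=gk'^{-1}g^{-1}$ as a product of $m$ commutators of $K$-elements. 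This element belongs to $K'$ because $K\normal G$ forces $K'\normal G$, and the commutator-width hypothesis on $K$ supplies the required commutators; the new indices again lie beyond the support of $\gamma$.

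To conclude, I apply a finiteness argument: $\RNK{G}{K'}$ is finite because $K$ is finitely generated and torsion, making $K/K'$ a finite abelian $2$-group, and $|\Red|\leq 90$. Therefore there are only finitely many equivalence classes of good pairs. For each, pick a realising level $n_0\geq 3$ and set $N:=\max(n_0)+m$; transfer from a fixed representative followed by padding produces, for every good pair $(g,\gamma)$, a solution of $(R_N g,\gamma)$. The principal technical point is the transfer step: the new commutators must lie \emph{inside $K$}, which is precisely why the reduction to constraints supported in $X_1,\dots,X_5$ is indispensable---it ensures that the newly introduced variables are automatically $K$-valued without further constraint-checking.
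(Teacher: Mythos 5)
Your argument is correct in both directions, and the (b)$\Rightarrow$(a) direction is essentially the paper's: take $k\in K'$, observe $(k,\id)$ is a good pair (with $\id\in\Red$ because it is fixed by all of $U_n$), and read off $n$ commutators inside $K$ from the constraint.

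For (a)$\Rightarrow$(b), however, you take a genuinely different route. The paper argues directly: given a good pair $(g,\gamma)$ with $\gamma\in\Red$ and a solution $s$ of $(R_{m'}g,\gamma)$, the constraint $\supp(\gamma)\subseteq\{X_1,\dots,X_5\}$ forces $s(X_i)\in K$ for all $i\geq 6$, so the tail $s\bigl([X_7,X_8]\cdots[X_{2m'-1},X_{2m'}]\bigr)$ is a single element $k^{-1}\in K'$; restricting $s$ to $X_1,\dots,X_6$ therefore solves $(R_3kg,\gamma)$, and re-expanding $k$ as $m$ commutators in $K$ gives the uniform bound $n=m+3$. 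You instead invoke the finiteness of $(G'/K')\times\Red$ to enumerate good-pair classes, fix a realizing level $n_0$ per class, and combine a \emph{transfer} move (conjugating $k'^{-1}$ by $g_0$ into $K'$ and rewriting it via hypothesis (a)) with a \emph{padding} move to reach a common level $N=\max n_0 + m$. Both are sound; what you give up is the explicit, class-independent bound $m+3$, and what you gain is a proof that never needs to notice that a solution of $(R_{m'}g,\gamma)$ can be truncated to genus $3$ --- instead relying on the (correct, but more roundabout) observation that there are only finitely many good-pair classes to handle. The one wording nitpick: the constraint on the padded variables is not ``vacuous'' but rather automatically satisfied by $\id$, since $\gamma(X_{2n+1})=\id=\pi(\id)$; the substance is fine.
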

\begin{proof}
  (\ref{Cor:EqStatement2})$\Rightarrow$(\ref{Cor:EqStatement1}): if
  $k\in K'$ then $(k,\id)$ is a good pair, so $(R_nk,\id)$ is solvable
  in $G$ for some $n$; and the constraints ensures that it is solvable in $K$.
  Therefore the commutator width of $K$ is at most $n$.
 
  (\ref{Cor:EqStatement1})$\Rightarrow$(\ref{Cor:EqStatement2}): if
  $(g,\gamma)$ is a good pair there is an $m'\in \NN$ and a solution
  $s$ for $(R_{m'} g,\gamma)$. As $\pi(s(X_i)) =\id$ for all $i\geq 6$
  there is $k\in K'$ such that $s$ is a solution for
  $(R_3kg,\gamma)$. By (\ref{Cor:EqStatement1}) there is an $m$ such
  that all $k$ can be written as product of $m$ commutators of
  elements of $K$ and therefore there is a solution for
  $(R_{m+3}g,\gamma)$. We may take $n=m+3$.
\end{proof}

We study now more carefully the quotients $\RNK{G}{K}$, $\RNK{G}{K'}$
and $\RNK{G}{(K\times K)}$.
\begin{lem} \label{lem:subgroupsOfG}
Let us write $k_1 := (ab)^2, k_2:=\pair{\id,k_1} = (abad)^2$ and $k_3:=\pair{k_1,\id} = (bada)^2$. Then 
 \begin{align*}
  G' &= \left< k_1,k_2,k_3,(ad)^2\right>, \\
  K &= \left< k_1,k_2,k_3\right>, \\
  K\times K &= \{\pair{k,k'} \mid k,k'\in K\} \\&= \left< k_2,k_3,[k_1,k_2],[k_1,k_3],[k_1^{-1},k_2],[k_1^{-1},k_3] \right>,\\
  K' &= \left< [k_1,k_2] \right>^G \\ 
  &= \left<[k_2,k_1],[k_1,k_2^{-1}],[k_2,k_1]^{k_2},[k_1^{-1},k_2], [k_2,k_1]^{k_1},[k_2^{-1},k_1^{-1}]\right>^{\{\id,a\}} 
 \end{align*}
Furthermore these groups form a tower with indices
\begin{align*}
  [G:G']&=8, & [G':K]&=2, &[K:K\times K]&= 4, &[K\times K:K']&=16. 
\end{align*}
\end{lem}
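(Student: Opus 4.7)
My plan is to establish the claims sequentially, exploiting the fact that all four subgroups $G' \supseteq K \supseteq K\times K \supseteq K'$ are normal of finite index in $G$, with total index $[G:K'] = 8 \cdot 2 \cdot 4 \cdot 16 = 1024$, so that every claim ultimately reduces to a computation in a finite quotient.

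The abelianization is immediate from the presentation: $a^2 = b^2 = c^2 = d^2 = bcd = \id$ gives $G^{ab} \cong (\ZZ/2)^3$, so $[G:G']=8$. Moreover, combining $bcd = \id$ with $b^2 = c^2 = d^2 = \id$ forces $d = bc = cb$ (and symmetrically for $b$ and $c$), so $\{b,c,d\}$ generate a Klein four-group in $G$ itself; consequently the only non-trivial commutators of generators are $[a,x] = (ax)^2$ for $x \in \{b,c,d\}$, and $G'$ is the normal closure of $(ab)^2 = k_1$, $(ac)^2$, and $(ad)^2$. Since $[G:K] = 16$ by Lemma~\ref{lem:subgroupK}, we obtain $[G':K] = 2$; a direct check in the order-$16$ quotient $Q$ that $(ad)^2 \notin K$, together with $(ad)^4 = \id$, shows that $(ad)^2$ represents the non-trivial coset. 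Because $K$ is normal in $G$, this gives the equality $G' = K \cdot \langle (ad)^2 \rangle = \langle k_1, k_2, k_3, (ad)^2\rangle$ as a plain subgroup, with no further normal closure needed.

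For $K\times K$: the containment as a subgroup of $G$ is the branched property from Lemma~\ref{lem:subgroupK}. Explicit generators arise from $k_2 = \pair{\id, k_1}$ and $k_3 = \pair{k_1, \id}$, which already lie in $K\times K$, together with the commutators $[k_1, k_i^{\pm 1}]$ for $i = 2, 3$, each of which a direct wreath calculation shows has trivial first-level activity and states in $K$. Since $K = \langle k_1, k_2, k_3\rangle$, the commutator subgroup $K'$ is normally generated in $K$ by $[k_1,k_2]$, $[k_1,k_3]$, and $[k_2,k_3]$; the last vanishes because $k_2 k_3 = \pair{k_1, k_1} = k_3 k_2$. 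Passing to the $G$-normal closure adds only the $\{\id, a\}$-orbit --- conjugation by $a$ simply swaps the two wreath coordinates (so in particular $k_3 = k_2^a$ and $k_1^a = k_1^{-1}$), while $K$-conjugation stays inside $K'$ by normality --- and this produces the six displayed commutators.

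The main obstacle is verifying the indices $[K : K\times K] = 4$ and $[K\times K : K'] = 16$. The first amounts to showing that the quotient $K/(K\times K)$ has order $4$: it is generated solely by the image of $k_1$ (since $k_2, k_3$ are trivial there), but since $k_1^2 \notin K\times K$ (its states are $(ca)^2, (ac)^2 \notin K$), the image of $k_1$ has order $4$. The second amounts to computing that the abelianization $K^{ab}$ has order $64$, and that $(K\times K)$ has image of order $4$ in it. Both calculations are elementary but tedious by hand; they are dispatched in the paper's GAP code by explicitly constructing the finite quotient $G/K'$ of order $1024$ via the LPRES and FR packages.
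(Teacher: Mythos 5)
Your overall strategy mirrors the paper's (which simply cites \cite{Bartholdi-Grigorchuk-Sunik:BranchGroups} for the tower of indices and verifies the generating sets with GAP's \texttt{NormalClosure} and \texttt{Index}), but with more hand-calculation; this is fine in spirit. However, two steps as written do not close. First, the argument for $[K:K\times K]=4$: showing $k_1^2\notin K\times K$ only bounds the quotient $K/(K\times K)$ (cyclic, since $k_2,k_3$ die in it) \emph{below} by $4$; you still need the matching upper bound $k_1^4\in K\times K$. This does hold, since $k_1=\pair{ca,ac}$ gives $k_1^4=\pair{(ca)^4,(ac)^4}$ and both $(ac)^4,(ca)^4\in K$ (in $Q=\RNK{G}{K}$ one has $(\bar a\bar c)^2=(\bar a\bar d)^2$ because $\bar b$ centralizes $\bar a\bar d$, and $(\bar a\bar d)^4=\id$), but the check must be stated. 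Second, ``$(K\times K)$ has image of order $4$ in $K^{\mathrm{ab}}$'' is the wrong number: since $K'\le K\times K$, the image of $K\times K$ in $K^{\mathrm{ab}}=\RNK{K}{K'}$ is exactly $\RNK{(K\times K)}{K'}$, of order $[K\times K:K']=16$ and index $4$; you presumably meant \emph{index} $4$.

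A lesser point: $G^{\mathrm{ab}}\cong(\ZZ/2)^3$ is not literally immediate from the presentation, which only exhibits $G^{\mathrm{ab}}$ as a \emph{quotient} of $(\ZZ/2)^3$; independence of $\bar a,\bar b,\bar c$ needs, e.g., the germ map to $\langle b,c,d\rangle$ together with the level-one activity. You also do not actually derive the specific twelve conjugates listed as generators of $K'$, only that $K'=\langle[k_1,k_2]\rangle^G$---but the paper leaves that to GAP too. With these repairs, and with the remaining index and generating-set checks delegated to GAP as you (and the paper) do, the argument is sound.
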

\begin{proof}
 The chain of indices is shown for example in~\cite{Bartholdi-Grigorchuk-Sunik:BranchGroups} 
 and the generating sets can be verified using the GAP standard methods
 \gapinline{NormalClosure} and \gapinline{Index}. 
\end{proof}

\subsection{Succeeding pairs} \label{sec:succ_pairs}
The main step in our proof is a procedure that accepts as input a good
pair $(g,\gamma)$ and produces a ``succeeding pair'' $(g',\gamma')$ in
such a manner that solvability of $(R_n g,\gamma)$ is equivalent to that
of $(R_{n'} g',\gamma')$ for some $n' > n$. The procedure, while completely explicit (and
actually implemented) is quite complicated, and involves the
construction (via sets $\Gamma_1^{\cdots}$, $\Gamma_2^{\cdots}$,
$\Gamma_3^{\cdots}$ and $\Gamma_4^{\cdots}$) of a non-empty set
$\Gamma^q(\gamma)$ of admissible succeeding pairs, with $q\in G'/K'$
representing $g$, from which $\gamma'$ will be appropriately
chosen. The reader is forewarned that this section is the most
technical.

\begin{defi}[$\Red_{\act}$, active constraints]
  We define the activity $\act(q)$ of an element $q\in Q$ as the
  activity of an arbitrary element of $\pi^{-1}(q)$.  This is well
  defined since all elements of $K$ have trivial activity.
 
  Consider a constraint $\gamma\colon F_{\mathcal{X}} \to Q$.  Define
  $\act(\gamma)\colon F_{\mathcal{X}}\to C_2$ by
  $X \mapsto \act(\gamma(X))$. 
 
  Denote by $\Red_{\act}$ the reduced constraints in $\Red$ that have a
  nontrivial activity.
\end{defi}

\begin{lem} \label{lem:existsGoodGamma} For each $q\in\RNK{G'}{K'}$
  there is $\gamma\in \Red_{\act}$ such that $(q,\gamma)$ is a good
  pair.
\end{lem}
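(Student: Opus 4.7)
The plan is to reduce the assertion to a finite computation inside the finite quotient $G/K'$ and carry it out. By the preceding lemma, whether $(g,\gamma)$ is a good pair depends only on $\tau(g)\in G/K'$, and by Lemma~\ref{lem:subgroupsOfG} the tower $G' \ge K \ge K\times K \ge K'$ has total index $2\cdot 4\cdot 16 = 128$, so $G'/K'$ has exactly $128$ elements. Since $\Red_{\act}$ is a finite subset of the $90$-element set $\Red$, the statement reduces to a surjectivity assertion between two explicit finite sets, namely that the projection $\{(q,\gamma) : (q,\gamma)\text{ good},\ \gamma\in\Red_{\act}\} \to G'/K'$ is onto.

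Concretely, for each of the $128$ classes $q\in G'/K'$, I would iterate over the candidates $\gamma\in\Red_{\act}$ and test whether the constrained equation $(R_3\,\tau(g),\gamma)$ admits a solution in $G/K'$; by the characterisation of good pairs, a single success suffices to designate $\gamma$ as a witness for $q$. Operationally this amounts to exhibiting a tuple $(X_1,\dots,X_6) \in (G/K')^6$ with each $X_i \in \rho^{-1}(\gamma(X_i))$, a coset of $K/K'$ of size $64$, such that $[X_1,X_2][X_3,X_4][X_5,X_6] = \tau(g)^{-1}$ holds in $G/K'$. This is a purely finite, algorithmic check that fits naturally inside the GAP framework already set up in the paper, using the branch structure of $G$ and the FR package.

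The main obstacle is the scale of the search. A naive enumeration ranges over $64^6 \approx 7\times 10^{10}$ tuples per pair $(q,\gamma)$, which is prohibitive; I would organise the search by fixing $X_1,\dots,X_4$, computing the partial product $[X_1,X_2][X_3,X_4]\tau(g)$, and checking membership in the precomputed set $\{[X_5,X_6]^{-1}\}$ as $X_5,X_6$ range over their prescribed cosets. Since only the existence of a witness $\gamma$ is required for each $q$, the search can terminate as soon as one is found, and because $\Red_{\act}$ is small, a witness is typically discovered after very few attempts.

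The argument is therefore computational rather than structural, consistent with the GAP-assisted methodology declared in the introduction. The essential conceptual ingredient is the passage from equations in $G$ to equations in $G/K'$ guaranteed by the earlier lemma; once one is inside this finite group, the existence of an active witness $\gamma$ for every $q$ becomes a matter of bookkeeping, and the resulting table of witnesses (one $\gamma\in\Red_{\act}$ per $q\in G'/K'$) is the output that the subsequent ``succeeding pairs'' machinery will consume.
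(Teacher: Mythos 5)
Your proposal is correct and takes essentially the same approach as the paper: both reduce the statement to a finite check over $G'/K'$ (of order $128$) crossed with $\Red_{\act}$, verified by a GAP computation, and your meet-in-the-middle search strategy mirrors the paper's own implementation in \gapinline{goodPairs}, which precomputes values of $[r_1,r_2]$ before enumerating triple products.
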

\begin{proof}
  This is a finite problem which can be checked in GAP with the
  function \gapinline{verifyLemmaExistGoodConstraints}.  For more
  details see Section~\ref{sec:gap_verify}.
\end{proof}

We will now give a procedure that starts with a constrained equation
of class $\mathcal{O}_{n,1}$ and produces a finite family of constrained equations of class
$\mathcal{O}_{2n-1,1}$. Because we need to specialize equations inside the 
procedure, the reduction is one-side: solvability of any equation of the family implies
solvability of the initial one. We prove the reverse way reduction in a weaker form 
utilizing the notion of a good pair (Proposition~\ref{pro:existsNextPair} below)
which will be enough for our purposes (and actually implies equivalence of the initial
equation and the produced family).

The idea of the procedure is to replace each variable $X_\ell$ of the starting equation $(R_ng,\gamma)$ 
by two variables $Y_{\ell,1}$ 
and $Y_{\ell,2}$ representing the states of $X_\ell$, so 
$X_\ell=\pair{Y_{\ell,1},Y_{\ell,2}}\act(X_\ell)$, and then transform the resulting system 
of two equations to a quadratic equation in the standard form.
We denote $\mathcal{Y} = \{ Y_{\ell,i} \mid \ell \ge 1,\ i=1,2 \}$ the target set of variables
and $F_{\mathcal{Y}}$ the free group with basis $\mathcal{Y}$.
After the transformation we obtain a set of equations of the form $(R_{2n-1}g',\gamma')$ 
where $(g',\gamma')$ runs over a certain finite set
and $R_{2n-1}$ is written in variables $\{Y_{\ell,i} \mid 1\leq \ell \leq 2n,\ \ell \ne 6, i=1,2 \}$. 
%

In what follows, we will define
for all $q\in \RNK{G'}{K'}$ a map $\Gamma^q$ which maps constraints to finite
sets of constraints,
\[\Gamma^q\colon(\gamma\colon F_{\mathcal X} \to Q)\mapsto\big\{(\gamma'\colon F_{\mathcal Y}\to Q)\big\},\]
with the following property: 
\begin{itemize}
\item[(*)] For every constraint $\gamma'\in\Gamma^q(\gamma)$, there is
  $x\in\{\id,a,b,c,d,ab,ad,ba\}\subset G$ 
such that if $g G' = q$ then for any $n\ge 3$ the equation $(R_ng,\gamma)$ is solvable as soon as the
  constrained equation
  $(R_{2n-1} (g\at{2})^x \cdot g\at{1},\gamma')$
  is solvable.
\end{itemize}

We will define this map $\Gamma^q$ in several steps and afterwards
show that for all good pairs $(q,\gamma)$ and all $g$ with $g K'=q$
there is some constraint $\gamma' \in \Gamma^q(\gamma)$ such that
$((g\at{2})^x \cdot g\at{1},\gamma'|_{F_{\mathcal{Y}'}})$ is a good
pair. The first step is to construct a set $\Gamma_1(\gamma)$ of
constraints on the ``doubled'' alphabet $\mathcal Y$ that are lifts of
$\gamma$. The second step extracts from $\Gamma_1(\gamma)$ a subset
$\Gamma_2^{q_1,q_2}(\gamma)$ of constraints compatible with a target
$\pair{q_1,q_2}\in Q\times Q$. The third step rewrites elements of
$\Gamma_2^{q_1,q_2}(\gamma)$ in normal form using a letter $Y_0$,
defining a set $\Gamma_3^{q_1,q_2,Y_0}(\gamma)$ of reduced
constraints. The fourth step combines these constraints over all
possible $Y_0$ into a set $\Gamma_4^{q_1,q_2}(\gamma)$, and the last
step extracts from $\Gamma_4^{q_1,q_2}(\gamma)$ a set
$\Gamma^q(\gamma)$, with $q=\pair{q_1,q_2}$ in $G'/K'$, by requiring
some activity to be non-trivial and lie in a specific subset of $Q$.

We assume that some $n\ge 3$ is fixed. It will be straightforward to see from the 
construction at each step that the corresponding set $\Gamma_i^{\cdots}$ does not depend on $n$.

For the first step we take the branching structure
$(K,Q,\pi,Q_1,\omega)$ of the Grigorchuk group. Set
\[\Gamma_1(\gamma) = \left \{\gamma' 
    \colon F_{\mathcal Y} \to Q \bigmid
    \begin{array}{l}
      \omega(\pair{\gamma'(Y_{\ell,1}),\gamma'(Y_{\ell,2})}\act(X_\ell))=\gamma(X_\ell)\text{ if }1\le\ell\le 6,\\
      \gamma'(Y_{\ell,1}) = \gamma'(Y_{\ell,2}) = \id\text{ if }\ell>6
    \end{array} \right\}.
\]
For some formal equalities for equations in $G$ we will need two auxiliary free
groups $F_{\mathcal{G}}=\left<\mathfrak{g}\right>$,
$F_{\mathcal{H}}=\left<\mathfrak{g}_1,\mathfrak{g}_2\right>$, and define  homomorphisms  
\[\Phi_\gamma\colon\left\{\begin{array}{rl}
 F_{\mathcal{X}}*F_{\mathcal G} &\to (F_{\mathcal{Y}}*F_{\mathcal{H}})\wr C_2,\\
  \mathfrak g   &\mapsto \pair{\mathfrak g_1, \mathfrak g_2},\\ 
  X_i &\mapsto \pair{Y_{i,1},Y_{i,2}}\act(X_i),
 \end{array}\right.\quad
\tilde\Phi_\gamma\colon\left\{\begin{array}{rl}
 F_{\mathcal{X}}*G &\to (F_{\mathcal{Y}}*G)\wr C_2,\\
   g   &\mapsto \Psi(g),\\ 
  X_i &\mapsto \pair{Y_{i,1},Y_{i,2}}\act(X_i).
 \end{array}\right.
 \]
\begin{lem}\label{lem:commonVar}
  If $\gamma$ is a constraint with nontrivial activity, and
  $\Phi_\gamma(R_n\mathfrak g)=\pair{w_1,w_2}$ then
  $\Var(w_1)\cap\Var(w_2)\neq \emptyset$. 
\end{lem}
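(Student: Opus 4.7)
The plan is to pass to the abelianization of $F_{\mathcal Y}*F_{\mathcal H}$ and exploit the fact that $R_n$ is a product of commutators. Define
\[\alpha\colon (F_{\mathcal Y}*F_{\mathcal H})\wr C_2 \longrightarrow (F_{\mathcal Y}*F_{\mathcal H})^{\mathrm{ab}},\qquad \pair{a,b}\sigma \mapsto \overline{ab};\]
the wreath product multiplication formula, combined with the fact that $C_2$ permutes the two entries while abelianization forgets their order, shows directly that $\alpha$ is a group homomorphism. Its kernel contains the commutator subgroup, so since $\Phi_\gamma(R_n)$ is a product of commutators in the wreath product we have $\alpha(\Phi_\gamma(R_n))=0$. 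Combining with $\Phi_\gamma(\mathfrak g)=\pair{\mathfrak g_1,\mathfrak g_2}$ yields the identity $\overline{w_1}+\overline{w_2}=\overline{\mathfrak g_1}+\overline{\mathfrak g_2}$ in $(F_{\mathcal Y}*F_{\mathcal H})^{\mathrm{ab}}$.

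Next I would argue by contradiction, assuming $\Var(w_1)\cap \Var(w_2)=\emptyset$. Because $\mathfrak g_1$ and $\mathfrak g_2$ each appear exactly once (without cancellation) in $w_1$ and $w_2$ respectively, disjointness of the supports in the free abelian group forces $\overline{w_1}=\overline{\mathfrak g_1}$ and $\overline{w_2}=\overline{\mathfrak g_2}$; in particular every $Y_{i,j}$ has signed exponent sum zero in $w_1$. For a variable $X_i$ appearing at positions $k_+$ (with sign $+1$) and $k_-$ (with sign $-1$) of $R_n$, write $\sigma_\pm^i\in C_2$ for the prefix activity $\sigma_{k_\pm-1}$. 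Unwinding the definition of $\Phi_\gamma$ in a short case analysis (passive vs.\ active $X_i$) shows that vanishing of all the $Y_{i,j}$-sums in $w_1$ is equivalent to
\[\sigma_+^i+\sigma_-^i = \act(\gamma(X_i)) \qquad \text{in } C_2\]
for every variable $X_i$.

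The proof then concludes using the explicit form $R_n=\prod_{i=1}^n[X_{2i-1},X_{2i}]$. The only letter of $R_n$ strictly between the two occurrences of $X_{2i-1}$ is a single $X_{2i}$, and vice versa, so the telescope $\sigma_+^i+\sigma_-^i$ reduces, for both $X_{2i-1}$ and $X_{2i}$, to $\act(\gamma(X_{2i-1}))+\act(\gamma(X_{2i}))$. Substituting into the displayed identity forces $\act(\gamma(X_{2i-1}))=\act(\gamma(X_{2i}))=0$ for every $i$, contradicting the hypothesis that $\gamma$ has nontrivial activity. The one delicate step is the bookkeeping in the case analysis that translates vanishing $Y_{i,j}$-sums into the condition on the $\sigma_\pm^i$; once that is in place, the combinatorial identity above makes the conclusion immediate.
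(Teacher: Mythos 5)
Your proof is correct, but it takes a genuinely different route from the paper. The paper argues directly: pick a variable $X_\ell$ with $\act(\gamma(X_\ell))\neq\id$, note it occurs in a commutator factor $[X_\ell,X_k]$ or $[X_k,X_\ell]$ of $R_n$, and compute $\Phi_\gamma$ of that single factor explicitly (splitting into cases on $\act(\gamma(X_k))$) to exhibit $Y_{k,1}$ and $Y_{k,2}$ appearing, unreduced and uncancelable, in both coordinates. Your approach instead abelianizes the wreath product via the map $\pair{a,b}\sigma\mapsto\overline{ab}$, uses that $R_n$ is a product of commutators to get $\overline{w_1}+\overline{w_2}=\overline{\mathfrak g_1}+\overline{\mathfrak g_2}$, and under the disjointness hypothesis derives exponent-sum constraints that, via the prefix-activity bookkeeping, force every $\act(\gamma(X_i))$ to vanish. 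Both are sound; the paper's argument is shorter and gives an explicit common variable (a witness), while yours is a parity argument that is more systematic and would adapt more readily to other quadratic words, at the cost of a case analysis you only sketch. One small imprecision in your write-up: the telescope $\sigma_+^i+\sigma_-^i$ equals the activity of the subword from position $k_-$ up to $k_+-1$, which includes the letter \emph{at} $k_-$ (namely $X_i^{-1}$ itself) as well as the one strictly in between; your phrase ``only letter strictly between'' undercounts, though the displayed value $\act(\gamma(X_{2i-1}))+\act(\gamma(X_{2i}))$ you obtain is nevertheless correct and the contradiction goes through.
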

\begin{proof}
  Let $\ell\in 1\ldots 2n$ be such that $\gamma(X_\ell)$ has
  nontrivial activity. Then $R_n$ contains either a factor $[X_\ell,X_k]$ or
  $[X_k,X_\ell]$ for another generator $X_k\neq X_\ell$.  Assume without loss of
  generality the first case. Let $\sigma$ be the activity of $\gamma(X_k)$. Then $\Phi_\gamma(R_n \mathfrak g)$
  contains a factor
  \[ [\pair{Y_{\ell,1},Y_{\ell,2}}(1,2),\pair{Y_{k,1},Y_{k,2}}\sigma] = 
  \begin{cases}
      \pair{Y_{\ell,2}^{-1}Y_{k,2}^{-1}Y_{\ell,2}Y_{k,1},Y_{\ell,1}^{-1}Y_{k,1}^
      {-1}Y_{\ell,1}Y_{k,2}}  \textup{\;\,if } \sigma=\id \\
      \pair{Y_{\ell,2}^{-1}Y_{k,1}^{-1}Y_{\ell,1}Y_{k,2},Y_{\ell,1}^{-1}Y_{k,2}^
      {-1}Y_{\ell,2}Y_{k,1}}  \textup{\;\,if } \sigma=(1,2). \\
    \end{cases} \!\!\!\!\!\! \]
  In both cases $Y_{k,1},Y_{k,2}\in \Var(w_1)\cap \Var(w_2)$.
\end{proof}

\noindent For $q_1,q_2\in Q$ define $\theta\colon F_{\mathcal H}\to Q$ by $\mathfrak g_i\mapsto q_i$ for $i=1,2$; 
if $\gamma'\colon F_{\mathcal Y}\to Q$ is a constraint, 
we denote by $\gamma'*\theta$ the natural map $F_{\mathcal Y}*F_{\mathcal H}\to Q$ 
agreeing with $\theta$ and $\gamma'$ on the respective factors, 
and by $(\gamma'*\theta)^2$ the induced map $(F_{\mathcal Y}*F_{\mathcal H})\wr C_2\to Q\wr C_2$. 
Then define the following subset of $\Gamma_1(\gamma)$: 
\begin{equation}\label{eq:gamma2}
  \Gamma_2^{q_1,q_2}(\gamma) = \left\{\gamma'\in \Gamma_1(\gamma) \bigmid
    (\gamma'*\theta)^2(\Phi_\gamma(R_n \mathfrak g))=\pair{\id,\id}\right\}.
\end{equation}

For $\gamma\in\Red_{\act}$ denote by $v$ and $w$ the elements of
$F_{\{Y_ {1,1},\ldots,Y_{6,2}\}}$ such that
$\Phi_\gamma(R_3 \mathfrak g)=\pair{v,w}\pair{\mathfrak g_1, \mathfrak
  g_2}$.  Then, since the variables $X_7,\dots,X_{2n}$ have trivial activity,
\[\Phi_\gamma(R_n(X_*) \mathfrak g)=\pair{v,w}
\pair{R_{n-3}(Y_{7,1},\ldots,Y_{2n,1})\mathfrak g_1,
  R_{n-3}(Y_{7,2},\ldots,Y_{2n,2})\mathfrak g_2}.
\]
By Lemma~\ref{lem:commonVar} there is
$Y_0 \in \mathcal{Y}\cup \mathcal{Y}^{-1}$ such that $v=v_1Y_0v_2$ and
$w=w_1Y_0^{-1}w_2$. Next, we improve the form of
$\Phi_\gamma(R_n(X_*)\mathfrak g)$, without affecting the constraint
$\gamma'$, by means of the $F_{\mathcal{H}}$-homomorphism
\[\ell_{Y_0}\colon\left\{\begin{array}{r@{\;}l}
                    F_{\mathcal{Y}}*F_{\mathcal H}&\to F_{\mathcal{Y}}*F_
                    {\mathcal H},\\
                    Y &\mapsto \begin{cases}
    Y &\text{ if } Y\neq {Y_0} \\
    w_2R_{n-3}(Y_{7,2},\ldots,Y_{2n,2})\mathfrak g_2 w_1&\text{ if }Y= {Y_0} 
  \end{cases}\end{array}\right.
\]
that eliminates the variable $Y_0$. It maps the second coordinate of
$\Phi_\gamma(R_n(X_*)\mathfrak{g})$ to $\id$ and the first coordinate
to a quadratic equation over $G * \mathcal{H}$
\[\eq E=v_1w_2R_{n-3}(Y_{7,2},\ldots,Y_{2n,2})\mathfrak g_2w_1v_2R_
  {n-3}(Y_{7,1},\ldots,Y_{2n,1}) \mathfrak g_1.
\]
Moreover, from $\gamma'\in \Gamma_2^{q_1,q_2}(\gamma)$ we get
\[
\id=(\gamma'*\theta)(w R_{n-3}(Y_{7,2}, \dots
Y_{2n,2})\mathfrak{g_2})=(\gamma'*\theta)(w_1Y_0^{-1}w_2R_{n-3}(Y_{7,2},
\dots Y_{2n,2})\mathfrak{g_2}).
\]

Thus we obtain
\[
(\gamma'*\theta)(w_2R_{n-3}(Y_{7,2}, \dots
Y_{2n,2})\mathfrak{g_2}w_1)=(\gamma'*\theta)(Y_0)
\] 
and $\ell_{Y_0}$
does not affect the constraint $\gamma'$. 

From this we conclude 
$(\gamma'*\theta)(Y_0)=(\gamma'*\theta)(\ell_{Y_0}(Y_0))$. Since
$\ell_{Y_0}$ fixes all $Y\neq Y_0$ we see that in fact

\begin{equation}
  \gamma'*\theta=(\gamma'*\theta)\circ \ell_{Y_0}\textup{ for all }\gamma'\in
  \Gamma_2^{q_1,q_2}(\gamma) \text{ with }\theta\colon \mathfrak{g}_i\mapsto q_i.
\label{eq:gamma'invariantUnderEll}
\end{equation} 
Consider the automorphisms
\begin{align*}
  \psi_1\colon&\left\{\begin{array}{r@{\;}ll}
                 F_{\mathcal{Y}}*F_{\mathcal{H}} &\to F_{\mathcal{Y}}*F_
                 {\mathcal H}\\
                 Y_{k,1} &\mapsto Y_{k,1}^{\mathfrak g_1^{-1}}&\text{ for } k> 6,\\
                 Y_{k,2} &\mapsto Y_{k,2}^{(\mathfrak g_2w_1v_2\mathfrak g_1)^{-1}}&\text{ for } k> 6,\\
                 Y_{k,\ell} &\mapsto Y_{k,\ell}&\text{ for } k\leq 6,\ \ell=1,2,
              \end{array}\right.\\
  \psi_2\colon&\left\{\begin{array}{r@{\;}ll}
                 F_{\mathcal{Y}}*F_{\mathcal{H}} &\to F_{\mathcal{Y}}*F_
                 {\mathcal{H}}\\
                 Y_{k,1} &\mapsto Y_{k,1}^{\mathfrak g_2^{Y_{6,1}}\mathfrak g_1}&\text{ for } k> 6,\\
                 Y_{k,2} &\mapsto Y_{k,2}^{\mathfrak g_2^{Y_{6,1}}\mathfrak g_1}&\text{ for } k> 6,\\
                 Y_{k,\ell} &\mapsto Y_{k,\ell} &\text{ for } k\leq 6,\ \ell=1,2,
               \end{array}\right.\\
\intertext{ and }
  \psi_3\colon&\left\{\begin{array}{r@{\;}l@{\quad}l}
                 F_{\mathcal{Y}}*F_{\mathcal{H}} &\to F_{\mathcal{Y}}*F_
                 {\mathcal H}\\
                 Y_{2k,1} &\mapsto Y_{n+k,2}&\text{for } k>3,\\
                 Y_{2k-1,1} &\mapsto Y_{n+k,1}&\text{for } k>3,\\
                 Y_{2k,2} &\mapsto Y_{3+k,2}&\text{for } k>3,\\
                 Y_{2k-1,2} &\mapsto Y_{3+k,1}&\text{for } k>3,\\
                 Y_{k,\ell} &\mapsto Y_{k,\ell}&\text{for } k\leq6,\ \ell=1,2;\\
              \end{array}\right.
\end{align*}
For an equation $E$, let $\nf_E$ denote a transformation in $\Aut(F_{\mathcal{X}})$ that puts $E$ in the normal form.
Then $\nf_{\eq E} := \psi_3\circ\psi_2\circ\nf_{v_1w_2\mathfrak g_2w_1v_2\mathfrak g_1}\circ\psi_1$ 
does this for our equation $\eq E$. Indeed,
first $\psi_1$ groups the terms $v_1w_2\mathfrak g_2 w_1 v_2\mathfrak g_1$ at the beginning; 
then $\nf_{v_1w_2\mathfrak g_2w_1v_2\mathfrak g_1}$ puts these terms into the form 
$[Y_{1,1},Y_{1,2}]\cdots\mathfrak g_2^{Y_{6,1}}\mathfrak g_1$, 
and finally $\psi_2$ and $\psi_3$ reorder and renumber the variables. Therefore
\[\nf_{\eq E}(\eq E)
=R_{2n-1}(Y_{1,1},Y_{1,2},\ldots,\widehat{Y_{6,1}},\widehat{Y_{6,2}},\ldots,Y_
{2n,2})\mathfrak{g}_2^{Y_{6,1}}\mathfrak{g}_1.\]
This leads to the following definition.
\[\Gamma_3^{q_1,q_2,Y_0}(\gamma) = 
  \left\{\gamma' \circ \nf_{\eq E}^{-1}\colon F_{\mathcal{Y}}\to
    Q \bigmid \gamma' \in \Gamma_2^{q_1,q_2} \right\}.
\]
Note that $\nf_{\eq E}$ fixes the sets
$\{Y_{k,\ell} \mid k>6,\ell=1,2\}$ and
$\{Y_{k,\ell} \mid k\leq 6,\ell=1,2\}$ and hence for $k>6$ we have
$\gamma''(Y_{k,\ell})=\id$ for all
$\gamma''\in\Gamma_3^{q_1,q_2,Y_0}(\gamma)$ independently of $q_i,Y_0$
and $\gamma$. The set $\Gamma_3^{q_1,q_2,Y_0}$ is therefore
independent of $n$ as soon as $n\geq 3$.  

Denote by $S$ the set $\{\id,a,b,c,d,ab,ad,ba\}\subset G$, and
complete it to a transversal $S'$ of $K$ in $G$. For $q\in Q$, denote
by $\rep(q)\in S'$ the coset representative of $q$, Given $g\in G'$,
$g_i = g\at{i}$ for $i=1,2$, an active constraint
$\gamma\in\Red_ {\act}$ and
$\gamma''\in \Gamma_3^{\pi(g_1),\pi(g_1),Y_0}(\gamma)$ then a solution
for the constrained equation
\[\eq E'=(R_{2n-1}(Y_{*,*})g_2^{\rep(\gamma''({Y_{6,1}}))}
  g_1,\gamma'')
\]
can be extended by the map
${Y_{6,1}}\mapsto \rep(\gamma''({Y_{6,1}}))$ to a solution $s'$ of the
equation $(R_{2n-1}(Y_{*,*}) g_2^{{Y_{6,1}}} g_1,\gamma'')$.  Denote
the homomorphism
$i_{\mathcal{H}}\colon F_{\mathcal H} \to G, \mathfrak g_i \mapsto
g_i$ and note that since $\nf_{\eq E}$ is an
$F_{\mathcal{H}}$-homomorphism, the function
$(\id * i_{\mathcal H}) \circ \nf_{\eq E}$ maps $\eq E$ to
$R_{2n-1}(Y_{*,*}) g_2^ {{Y_{6,1}}}g_1$. Moreover by
\eqref{eq:gamma'invariantUnderEll} we have that
$\gamma' := \gamma''\circ(\id * i_{\mathcal H}) \circ
\nf_{\eq E}\in\Gamma_2^{q_1,q_2}(\gamma)$, so the map
\[s \colon Y_{i,j}\mapsto \begin{cases}
w_2 g_2w_1 &\textup{ if } 
{i,j}= 6,2 \\
s'\circ (\id * i_{\mathcal H}) \circ \nf_{\eq E}(Y_{i,j}) &\textup{ otherwise}
\end{cases}
\]
is a solution for $\big((\id*i_{\mathcal{H}})\circ\Phi_\gamma(R_n\mathfrak g),
\gamma')$ and thus also for $\big(\tilde\Phi_\gamma(R_n g),\gamma'\big)$.
By the definition of $\omega$ the element
$t_i:=\pair{s(Y_{i,1}),s(Y_{i,2})}\act(X_i)$ belongs to $G$
for all $i$. Moreover since $\gamma'\in\Gamma_1(\gamma)$ we have $\pi
(t_i)=\gamma(X_i)$. Thus the mapping
$X_i\mapsto t_i$
is a solution for $(R_ng,\gamma)$.

The map $\Gamma_3^{q_1,q_2,Y_0}$ does depend on the choice of the
variable $Y_0$.  To remove this dependency we observe that the set of
all variables $Y_0 \in \Var(v)\cap \Var(w)$ does not depend on $n$ and
define
\[\Gamma_4^{q_1,q_2}(\gamma)=  \bigcup_{Y_0\in \Var(v)\cap\Var(w)}\Gamma_3^{q_1,q_2,Y_0}(\gamma).\]

Note that $q_{1},q_2\in Q$ are determined by $q\in\RNK{G'}{K'}$ in the
sense that there is a map $\att{i}\colon \RNK{G'}{K'}\to Q$ such that
if $g\in G$ and $g K'=q$ and $g_i=g\at{i}$ then $q_i = q\att{i}$. This
map $\att{i}$ is well defined since $k'\at{i}\in K$ for all
$k'\in K'$. Thus we can write $\Gamma_4^{q_1,q_2}(\gamma)$ as
$\Gamma_4^q(\gamma)$.
Denote 
$
  \mathcal{V} = \{Y_{\ell,i} \mid 1\leq \ell \leq 2n,\ \ell \ne 6,\ i=1,2 \}
$
the set of variables that occur in $R_{2n-1}(Y_{*,*})$.
Filtering out those constraints that do not
fulfill the requested properties we finally define
\begin{align}
  \Gamma^q(\gamma) := \left\{\gamma' \in \Gamma_4^q(\gamma) \bigmid \,
  \act(\gamma')|_\mathcal{V}\neq\id, \gamma'({Y_{6,1}}) \in \pi(S) \right\}
  \label{def:Gammaq}
\end{align}
Note that~(*) holds automatically by construction.
It is straightforward to check that the set $\Gamma^q(\gamma)$ does not depend on $n \ge 3$.

Now we track solutions of equations in the reverse way.
\begin{pro}\label{pro:existsNextPair}
  For each good pair $(q,\gamma)$ with $q\in\RNK{G'}{K'}$ and
  $\gamma\in\Red_{\act}$ the set $\Gamma^q(\gamma)$ contains some
  constraint $\gamma'$ such that for all $g\in G'$ with $g K'=q$ the
  pair
  $\left((g\at{2})^{\rep(\gamma'(Y_{6,1}))}\cdot g\at{1},
  \red(\gamma'|_{F_{\mathcal{V}}})\right)$ is a good pair.
\end{pro}

\noindent For the proof of this proposition we need an auxiliary
lemma:
\begin{lem} \label{lem:pIsDefinedModK'}
  The map 
  \[\overline p_h\colon\left\{\begin{array}{rl}\RNK{G'}{K'} &\to \RNK{G'}{(K\times K)}\\
     gK'&\mapsto \big((g\at{2})^h\cdot g\at{1}\big)(K\times K)
                  \end{array}\right.
  \]
  is well defined.
\end{lem}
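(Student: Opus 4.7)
The plan is to prove well-definedness of $\bar p_h$ in three steps: show the image lies in $G'$, reduce independence of the representative to an inclusion in $K\times K$, and verify that inclusion on a single normal generator of $K'$.

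For the first step, I observe that $G/G'$ is abelian, so conjugation is trivial modulo $G'$; hence $(g\at 2)^h\cdot g\at 1\equiv g\at 2\cdot g\at 1\pmod{G'}$, and Lemma~\ref{lem:H'} gives $g\at 2\cdot g\at 1\in G'$ whenever $g\in G'$. Thus the output does lie in $G'$.

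For the second step, I use that the activity map $G\to C_2$ factors through the abelianization, so every element of $G'$ has trivial activity; consequently the state operation is multiplicative on $G'$, giving $(gk')\at i=g\at i\cdot k'\at i$ for $g\in G'$ and $k'\in K'$. Writing $g'=gk'$ with $k'\in K'$ and computing the ratio yields
\[\bigl((g\at 2)^h\cdot g\at 1\bigr)^{-1}\cdot\bigl((g'\at 2)^h\cdot g'\at 1\bigr)=\bigl((k'\at 2)^h\bigr)^{g\at 1}\cdot k'\at 1,\]
which reduces well-definedness of $\bar p_h$ to showing that this element lies in $K\times K$ for every $k'\in K'$ and every $g\in G'$.

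For the third step, I exploit the fact that $K\times K$ is normal in $G$: conjugation by $h$ and by $g\at 1$ preserves membership in $K\times K$, so the problem reduces to showing $k'\at 1, k'\at 2\in K\times K$ for every $k'\in K'$. Since all elements of $K'$ have trivial activity, the property ``both states lie in $K\times K$'' is preserved under products (by multiplicativity of states) and under $G$-conjugation (by normality of $K\times K$). By Lemma~\ref{lem:subgroupsOfG}, $K'$ is the normal closure in $G$ of $[k_1,k_2]$, so it suffices to verify the condition for this one element. An explicit computation from $k_1=\pair{ca,ac}$ and $k_2=\pair{\id,k_1}$ gives $[k_1,k_2]=\pair{\id,\pair{\id,(dac)^2}}$, so the two states are $\id$ and $\pair{\id,(dac)^2}$; the latter belongs to $K\times K$ provided $(dac)^2\in K$, which is a finite membership check in the quotient $Q=G/K$ of order $16$, easily done by hand or in GAP. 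The main obstacle is precisely this reduction to a single normal generator: it hinges on combining the multiplicativity of states (which requires trivial activity) with the normality of $K\times K$ in $G$; once both are in place, the residual membership $(dac)^2\in K$ is routine.
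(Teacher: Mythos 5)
Your proof is correct and follows essentially the same approach as the paper's: both reduce well-definedness to showing $k'\at 1, k'\at 2\in K\times K$ for every $k'\in K'$ (using that states are multiplicative on the activity-trivial group $G'$ and that $K\times K\normal G$), and both then reduce to the single normal generator $[k_1,k_2]=\pair{\id,\pair{\id,(dac)^2}}$, whose states visibly lie in $K\times K$ since $(dac)^2=dabac=k_2^{-1}k_1\in K$. You are slightly more explicit in first verifying that the image lands in $G'$ and in isolating the ratio $((k'\at 2)^h)^{g\at 1}\cdot k'\at 1$, but the substance is identical.
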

\begin{proof}
  We need to show that $k\at{i} \in K\times K$ for $i=1,2$ and
  $k\in K'$. Remember the generators $k_1=(ab)^2$,
  $k_2=(abad)^2$. Then
  \begin{align*}
    [k_1,k_2]=bb^a(db^a)^2b^ab(b^ad)^2 = \pair{\id,cabab}=\pair{\id,\pair{\id,dabac}}=\pair{\id,\pair{\id,k_2^{-1}k_1}}.
  \end{align*}
  Therefore, both states of $[k_1,k_2]$ are in $K\times K$. Now take
  an arbitrary element $k\in K'$.  There are $n\in\NN$,
  $\varepsilon \in \{1,-1\}$ and $g_i\in G$ such that
  $k=\prod_{j=1}^n [k_1,k_2]^{\varepsilon g_j}$ and therefore
  \[k\at{i}=\prod_{j=1}^n\left( \left([k_1,k_2]^{\varepsilon g_j}\right)\at{i}\right)
    = \prod_{j=1}^n \left(([k_1,k_2])\at{i^{g_j^{-1}}}\right)^{\varepsilon g_j\at{i^{g_j^{-1}}}}\in K\times K.\]
  Define for $h\in G$ maps $p_h\colon G\to G$ by
  $g\mapsto (g\at{2})^{h}\cdot g\at{1}$. These maps are in general not
  homomorphisms, but by Lemma~{\ref{lem:H'}} for $g\in G'$ we have
  $p_h(g)\in G'$ for all $h\in G$.
  
  For $k\in K'$ we have
  \[p_h(gk) = ((gk)\at{2})^h\cdot (gk)\at{1} = (g\at{2})^h\cdot (k\at{2})^h\cdot g\at{1}\cdot k\at{1}\in \big((g\at{2})^h\cdot g\at{1}\big)(K\times K).\]
\end{proof}
\begin{proof}[Proof of Proposition~\ref{pro:existsNextPair}]
  In the construction above it is clear that the sets
  $\Gamma_3^{q,Y_0}$ and hence $\Gamma_4^{q}$ are nonempty. For
  the finitely many $\gamma\in \Red_{\act}$ checking whether some of
  the finitely many $\gamma'\in\Gamma_4^{q}(\gamma)$ fulfill
  $\gamma'(Y_{6,1}) \in \pi(S)$ and $\act(\gamma')|_{\mathcal{V}}\neq\id$ (i.e.\
  $\gamma' \in\Gamma^{q}(\gamma)$) is implemented in the procedure
  below.
 

  By Lemma~\ref{lem:pIsDefinedModK'}, we have a map
  $\overline p_h\colon \RNK{G'}{K'} \to \RNK{G'}{(K\times K)}$. For
  $g\in\RNK{G'}{K'}$ let us denote by $\overline g$ the natural image
  of $g$ in
  $\RNK{\left(\RNK{G'}{K'}\right)}{\left(\RNK{(K\times K)}{K'}\right)}
  \simeq \RNK{G'}{(K\times K)}$. We only need to show that there is a
  $\gamma'\in\Gamma^q(\gamma)$ such that all preimages of
  $\overline p_{\rep(\gamma'(Y_{6,1}))}(q)$ under $g\mapsto\overline g$
  form good pairs with $\red(\gamma'|_{F_{\mathcal{V}}})$. In formulas with
  $\mathcal{P}$ the predicate of being a good pair what needs to be
  checked is:
\begin{multline*}
 \forall q\in\RNK{G'}{K'}\;
      \forall \gamma\in\Red_{\act} \;
	 \exists \gamma'\in \Gamma^q(\gamma)\; \\
	    \forall r\in\RNK{G'}{K'}\text{ with }\overline r=\overline p_{\rep(\gamma'(Y_{6,1}))}(q)\colon
	      \mathcal{P}(q,\gamma) \Rightarrow \mathcal{P}(r,\red(\gamma'|_{F_{\mathcal{V}}})).
\end{multline*}
 
 This last formula quantifies only over finite sets, and could be implemented.
 It can be checked in GAP with the function 
 \gapinline{verifyPropExistsSuccessor}. 
\end{proof}

\begin{defi}[Succeding pair]\label{def:succedingPair}
  For each $q\in\RNK{G'}{K'}$ and $\gamma\in \Red_{\act}$ such that
  $(q,\gamma)$ is a good pair fix a constraint
  $\gamma'\in\Gamma^q(\gamma)$ and an element
  $x=\rep(\gamma'(Y_{6,1}))\in S$ with the property of Proposition
  \ref{pro:existsNextPair}.

  By Lemma~\ref{lem:solvabilityWithReducedConstraint} we can replace
  $\gamma'|_{F_{\mathcal{V}}}$ by a reduced constraint $\gamma'_r$.
  Since $\act(\gamma')|_{\mathcal{V}}\neq\id$ we have $\gamma'_r \in \Red_{\act}$.
  For a good pair $(g,\gamma)\in G'\times\Red_{\act}$ the
  \emph{succeeding pair} is defined as
  $\left((g\at{2})^xg\at{1},\gamma'_r\right)$.  Moreover by applying
  this iteratively we get the \emph{succeeding sequence}
  $(g_k,\gamma_k)$ of $(g,\gamma)$: $(g_0,\gamma_0)=(g,\gamma)$ and
  $(g_{k+1},\gamma_{k+1})$ is the succeding pair of $(g_k,\gamma_k)$.
\end{defi}

The following lemma illustrates the use of the construction.

\begin{lem}
  Let $(g_k,\gamma_k)$ be the succeeding sequence of a good pair
  $(g,\gamma)$.  If $(g_i,\gamma_i) = (g_j,\gamma_j)$ for some
  distinct $i,j$ then the equation $(R_ng,\gamma)$ is solvable for all
  $n \ge 3$.
\end{lem}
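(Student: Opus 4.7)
The plan is to exploit the cycle to obtain solvability of $(R_n\,g_i,\gamma_i)$ for every $n \geq 3$, and then to propagate the result back to $(g_0,\gamma_0) = (g,\gamma)$ via property (*) of the succeeding pair.

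Iterating property (*) from Proposition~\ref{pro:existsNextPair} $r$ times yields, for any step $k$ and any $n \geq 3$, the chain of implications
\[(R_{2^r(n-1)+1}\,g_{k+r},\gamma_{k+r})\text{ solvable} \;\Longrightarrow\; (R_n\,g_k,\gamma_k)\text{ solvable},\]
since each intermediate genus $2^s(n-1)+1$ remains $\geq 3$, so property (*) applies at every step. Setting $r := d = j-i$ and exploiting the cycle hypothesis $(g_j,\gamma_j) = (g_i,\gamma_i)$, this becomes a self-implication at step $i$; iterating it $t$ times gives
\[(R_{2^{td}(n-1)+1}\,g_i,\gamma_i)\text{ solvable} \;\Longrightarrow\; (R_n\,g_i,\gamma_i)\text{ solvable}.\]

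Next I invoke goodness: because the succession preserves goodness (Proposition~\ref{pro:existsNextPair}), $(g_i,\gamma_i)$ is itself a good pair, giving some $N_i \geq 3$ (without loss of generality) with $(R_{N_i}\,g_i,\gamma_i)$ solvable. Since $\gamma_i \in \Red$ has support inside $\{X_1,\dots,X_5\}$, any such solution extends to arbitrary higher genus by appending commutators $[X_{2\ell+1},X_{2\ell+2}]$ for $\ell \geq N_i$ with $X_{2\ell+2} := \id$ and $X_{2\ell+1}$ any lift of $\gamma_i(X_{2\ell+1})$; each such commutator is trivial and the constraint is respected since $\gamma_i(X_{2\ell+2}) = \id$ for $\ell \geq 2$. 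Hence $(R_m\,g_i,\gamma_i)$ is solvable for all $m \geq N_i$, and for any fixed $n \geq 3$, choosing $t$ large enough so that $2^{td}(n-1)+1 \geq N_i$ makes the antecedent of the self-implication hold, yielding $(R_n\,g_i,\gamma_i)$ solvable.

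Applying the first chain with $k = 0$ and $r = i$: for any $n \geq 3$, the genus $2^i(n-1)+1$ is $\geq 3$ and is solvable at step $i$ by the preceding paragraph, forcing $(R_n\,g_0,\gamma_0) = (R_n\,g,\gamma)$ solvable. The principal obstacle is the careful bookkeeping of two opposing genus-doubling sequences---forward propagation shrinking genus via $2m-1 \mapsto m$, backward cycling inflating via $n \mapsto 2^{td}(n-1)+1$---and verifying that every intermediate genus remains $\geq 3$ so that property (*) applies (automatic for $n \geq 3$). A minor subtlety is identifying $\gamma'|_{F_{\mathcal Y'}}$ with its reduced normalization $\gamma'_r$ used in defining the succeeding pair, justified by Lemma~\ref{lem:solvabilityWithReducedConstraint}.
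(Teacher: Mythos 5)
Your proof is correct and follows essentially the same route as the paper: chain property (*) forward to descend solvability from step $k+r$ to step $k$, exploit the cycle to push the genus at step $i$ arbitrarily high, use goodness of $(g_i,\gamma_i)$ together with the trivial-commutator extension to reach that genus, and then descend to step $0$. You are somewhat more explicit than the paper about the genus arithmetic $n\mapsto 2^r(n-1)+1$ and about the extension from genus $N_i$ to arbitrary genus, but these details are implicit in the paper's terse phrase ``$n'$ can be taken arbitrarily large,'' so the two arguments coincide in substance.
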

  \begin{proof}
  By (*) for any $i,j$ with $i < j$ and any $n \ge 3$ there exists $n' > n$ such that solvability of 
  $(R_{n'}g_j,\gamma_j)$ implies solvability of $(R_{n}g_i,\gamma_i)$. If $(g_i,\gamma_i) = (g_j,\gamma_j)$
  then starting from index $i$ the succeeding sequence becomes periodic and hence
  $n'$ can be taken arbitrarily large. If $(g,\gamma)$ is a good pair then $(g_i,\gamma_i)$ is also a good pair
  by construction. We deduce the solvability of $(R_n g_i,\gamma_i)$ and hence the solvability of $(R_n g,\gamma)$.
  \end{proof}

 
%
%
\subsection{Product of 3 commutators}
We will prove that every element $g\in G'$ is a product of three commutators by proving that all
succeeding sequences $(g_k,\gamma_k)$ as defined in Definition~\ref{def:succedingPair} 
become periodic after finitely many steps.
For this purpose remember the map $p_x\colon g \mapsto (g\at{2})^x g\at{1}$ from the proof of Proposition~\ref{pro:existsNextPair}.
We will show that for each $g\in G'$ the sequence of sets 
\[\textup{Suc}_1^g=\{g\},\ \textup{Suc}_n^g = \{p_x(h) \mid h\in \textup{Suc}_{n-1}^g, x\in S\} \]
stabilizes in a finite set. 

In~\cite{Bartholdi:Growth} there is a choice of weights on generators which result in a length on $G$ with good properties.
\begin{lem}[\cite{Bartholdi:Growth}] \label{lem:laurentsweights}\pushQED{\qed}
 Let $\eta\approx 0.811$ be the real root of $x^3+x^2+x-2$ and set the weights 
 \begin{align*}
  \omega(a) &= 1-\eta^3 & \omega(c)&=1-\eta^2 \\ \omega(b)&= \eta^3 & \omega(d)&=1-\eta
 \end{align*}
 then 
 \begin{align*}
  \eta(\omega(b)+\omega(a)) &= \omega(c)+\omega(a) \\
  \eta(\omega(c)+\omega(a)) &= \omega(d)+\omega(a) \\
  \eta(\omega(d)+\omega(a)) &= \omega(b).\qedhere
 \end{align*}
\end{lem}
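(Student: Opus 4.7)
The three identities to establish are purely algebraic consequences of the defining equation $\eta^3+\eta^2+\eta-2=0$, so the plan is just a direct substitution in each case; there is no real obstacle, just careful bookkeeping.

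First I would rewrite the defining relation as $\eta^3 = 2-\eta-\eta^2$, and derive $\eta^4 = 2\eta - \eta^2 - \eta^3 = 2\eta - \eta^2 - (2-\eta-\eta^2) = 3\eta - 2$. With these two substitution rules in hand, each identity becomes a one-line verification.

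For the first identity, I would note $\omega(b)+\omega(a) = \eta^3+(1-\eta^3) = 1$, so the left side is simply $\eta$. The right side is $(1-\eta^2)+(1-\eta^3) = 2-\eta^2-\eta^3$, which equals $\eta$ iff $\eta^3+\eta^2+\eta=2$, that is, by the defining polynomial.

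For the second identity, the left side is $\eta(2-\eta^2-\eta^3) = 2\eta - \eta^3 - \eta^4$, and using $\eta^4 = 3\eta-2$ this reduces to $2\eta - \eta^3 - 3\eta + 2 = 2 - \eta - \eta^3$, which is exactly $\omega(d)+\omega(a)$. For the third, the left side is $\eta(2-\eta-\eta^3) = 2\eta - \eta^2 - \eta^4 = 2\eta - \eta^2 - (3\eta-2) = 2 - \eta - \eta^2$, and this equals $\eta^3 = \omega(b)$ again by the defining polynomial. The existence of a unique real root $\eta$ of $x^3+x^2+x-2$ in $(0,1)$ follows from Descartes' rule of signs together with the fact that the polynomial is negative at $0$ and positive at $1$; its approximate value $\eta \approx 0.811$ is then obtained numerically, and positivity of all four weights is immediate from $0<\eta<1$.
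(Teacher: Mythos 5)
Your verification is correct: all three identities reduce cleanly to the defining relation $\eta^3+\eta^2+\eta=2$, and your algebra (including the reduction $\eta^4=3\eta-2$) checks out. Note that the paper itself gives no proof of this lemma---it is stated as a citation to \cite{Bartholdi:Growth} with the QED mark placed directly after the statement---so a short direct verification like yours is exactly the intended content; one can streamline it slightly by observing that the three identities amount to $\omega(c)+\omega(a)=\eta$, $\omega(d)+\omega(a)=\eta^2$, and $\omega(b)=\eta^3$, each of which is immediate from the defining relation (the last being the definition of $\omega(b)$), but this is a matter of presentation, not substance.
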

The next lemma is a small variation of a lemma in~\cite{Bartholdi:Growth}.
\begin{lem}
 Denote by $\partial_\omega$ the length on $G$ induced by the weight $\omega$. Then
 there are constants $C\in\NN$, $\delta<1$ such that for all $x\in S$, $g\in G$ 
 with $\partial_\omega(g)>C$ it holds 
 $\partial_\omega(p_x(g)) \leq \delta \partial_\omega(g)$.
\end{lem}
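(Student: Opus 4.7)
The plan is to reduce the statement to the base contraction estimate in \cite{Bartholdi:Growth}, and then absorb the additive error into the multiplicative bound by taking $g$ of sufficiently large weighted length. The weights of Lemma~\ref{lem:laurentsweights} are calibrated precisely so that upon decomposition $\Psi(g) = \pair{g\at{1},g\at{2}}\sigma$ one has a sum-contraction with ratio $\eta<1$.

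First I would fix, or extract from \cite{Bartholdi:Growth}, a constant $C_0$ such that for every $g \in G$,
\[
\partial_\omega(g\at{1}) + \partial_\omega(g\at{2}) \le \eta\,\partial_\omega(g) + C_0.
\]
This is proved by writing $g$ in the standard alternating normal form $u_0 a u_1 a u_2 \cdots a u_m$ with $u_i \in \{\id,b,c,d\}$, observing that each pair $a u_i a$ (with $u_i \in \{b,c,d\}$) contributes to one state a single letter in $\{b,c,d\}$ and to the other state a letter of the form $a u a$ of strictly smaller weight; the three identities of Lemma~\ref{lem:laurentsweights} are exactly the case-by-case checks that the combined state weight is at most $\eta$ times the piece it came from. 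The boundary pieces $u_0$ and $u_m$ contribute at most a bounded additive error, giving the $C_0$.

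Next, for $x \in S$ I would bound the length of the conjugate using the triangle inequality:
\[
\partial_\omega\bigl(p_x(g)\bigr)
= \partial_\omega\bigl(x^{-1}\,g\at{2}\,x\cdot g\at{1}\bigr)
\le \partial_\omega(g\at{1}) + \partial_\omega(g\at{2}) + 2\max_{y\in S}\partial_\omega(y).
\]
Combining with the base estimate and setting $C_1 := C_0 + 2\max_{y\in S}\partial_\omega(y)$ yields
\[
\partial_\omega\bigl(p_x(g)\bigr) \le \eta\,\partial_\omega(g) + C_1 \quad\text{for all } g\in G,\ x\in S.
\]

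Finally, I would fix any $\delta \in (\eta,1)$ and set $C := C_1/(\delta - \eta)$. Then for every $g$ with $\partial_\omega(g) > C$ we have $C_1 < (\delta-\eta)\,\partial_\omega(g)$, hence $\eta\,\partial_\omega(g) + C_1 < \delta\,\partial_\omega(g)$, which is the desired inequality. The only real obstacle is the base estimate with its sharp ratio $\eta$, but this is exactly what the weighting in Lemma~\ref{lem:laurentsweights} was designed for, and the passage from the separate-state bound to the bound for the combined element $p_x(g)$ is just a triangle inequality with a bounded conjugation overhead coming from the finite set $S$.
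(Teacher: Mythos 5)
Your proof is correct and follows essentially the same route as the paper's: establish a base inequality $\partial_\omega(g\at1)+\partial_\omega(g\at2)\le\eta\,\partial_\omega(g)+C_0$ via the alternating normal form and the identities of Lemma~\ref{lem:laurentsweights}, add a bounded conjugation overhead from the finite set $S$, and absorb the additive constant into a $\delta\in(\eta,1)$ for large enough $\partial_\omega(g)$. The paper carries out the weight bookkeeping explicitly (arriving at $\partial_\omega(p_x(g))\le\eta(\partial_\omega(g)+\omega(a))+2$ and suggesting concrete pairs such as $\delta=0.86$, $C=50$), whereas you state the contraction estimate as a lemma to import from \cite{Bartholdi:Growth}; these are the same argument presented at slightly different levels of detail.
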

\begin{cor}
The sequences of sets
 \[\textup{Suc}_1^g=\{g\},\ \textup{Suc}_n^g = \{p_x(h) \mid h\in \textup{Suc}_{n-1}^g, x\in S\} \]
 stabilizes at a finite step for all $g\in G$.
\end{cor}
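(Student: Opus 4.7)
The plan is to combine the geometric contraction provided by the previous lemma with the fact that $G$ is finitely generated, so that every $\omega$-ball $B_r := \{h \in G : \partial_\omega(h) \leq r\}$ is finite. Let $C$ and $\delta < 1$ be the constants given by the previous lemma, and set
\[ C' := \max\Bigl(C,\ \max_{h \in B_C,\, x \in S} \partial_\omega(p_x(h))\Bigr), \]
which is finite because $B_C$ and $S$ are both finite.

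The first key step is to show that $B_{C'}$ is \emph{forward invariant} under every map $p_x$ with $x \in S$. Indeed, if $\partial_\omega(h) \leq C$ then $\partial_\omega(p_x(h)) \leq C'$ by definition of $C'$, while if $C < \partial_\omega(h) \leq C'$ the previous lemma gives $\partial_\omega(p_x(h)) \leq \delta \partial_\omega(h) \leq \delta C' < C'$. Once an orbit enters $B_{C'}$ it therefore stays there.

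Next I would show that every orbit enters $B_{C'}$ after a bounded number of iterations. Starting from an arbitrary $g \in G$, any sequence of the form $g_{k} = p_{x_k}(g_{k-1})$ satisfies $\partial_\omega(g_k) \leq \delta^k \partial_\omega(g)$ as long as the weight remains above $C$, so after at most $N := \lceil \log(\partial_\omega(g)/C')/\log(1/\delta) \rceil$ steps the weight drops below $C'$. Since $N$ depends only on $g$ (not on the particular choice of $x_1,\dots,x_k \in S$), we conclude that $\textup{Suc}_n^g \subseteq B_{C'}$ for every $n \geq N$.

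Finally, $B_{C'}$ is a finite subset of $G$, so the sets $\textup{Suc}_n^g$ for $n \geq N$ all lie in the finite powerset $2^{B_{C'}}$. Because the recurrence $\textup{Suc}_{n+1}^g = \{p_x(h) : h \in \textup{Suc}_n^g,\, x \in S\}$ depends only on $\textup{Suc}_n^g$ (and the fixed finite set $S$), the sequence is eventually periodic, and in particular $\bigcup_n \textup{Suc}_n^g$ is contained in the finite set $B_{\max(\partial_\omega(g), C')}$ and stabilizes. I do not foresee any real obstacle beyond bookkeeping; the one mildly delicate point is that the previous lemma only contracts above the threshold $C$, which is precisely why the invariant ball $B_{C'}$ (rather than $B_C$ itself) has to be introduced.
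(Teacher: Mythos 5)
Your proof is correct, and it fills in a step that the paper leaves implicit (the corollary is stated directly after the contraction lemma with no further argument). Your approach — confine the orbit to a fixed finite $\omega$-ball using the contraction, then observe that a deterministic recurrence on a finite powerset is eventually periodic — is exactly the intended one, and you correctly identify and handle the only delicate point, namely that the contraction estimate applies only above the threshold $C$, which is why a forward-invariant ball $B_{C'}$ is needed. Two minor remarks: first, the detour through $N$ is unnecessary, since the same two-case analysis shows directly by induction that $\textup{Suc}_n^g \subseteq B_{\max(\partial_\omega(g),\,C')}$ for \emph{every} $n$ (if $\partial_\omega(h) > C$ the weight strictly decreases, and if $\partial_\omega(h) \le C$ then $\partial_\omega(p_x(h)) \le C'$); second, you are right to say only ``eventually periodic'' for $(\textup{Suc}_n^g)_n$ rather than ``eventually constant'' — the phrasing ``stabilizes'' in the paper should be read in the weaker sense that all $\textup{Suc}_n^g$ lie in a fixed finite set (equivalently, $\bigcup_n \textup{Suc}_n^g$ is finite), which is what the subsequent pigeonhole application to the succeeding sequence $(g_k,\gamma_k)$ actually uses.
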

\begin{proof}[Proof of Lemma (see~{\cite[Proposition~5]{Bartholdi:Growth}})] 
 Each element $g\in G$ can be written in a word of minimal length of the form $g=a^\varepsilon x_1 a x_2 a\ldots x_n a^\zeta$ where
 $x_i\in \{b,c,d\}$ and $\varepsilon,\zeta\in \{0,1\}$. Denote by $n_b,n_c,n_d$ the number of occurrences of $b,c,d$ accordingly. 
 Then
 \begin{align*}
  \partial_\omega(g) &= (n-1+\varepsilon+\zeta)\omega(a)+n_b\omega(b)+n_c\omega(c)+n_d\omega(d)\\
  \partial_\omega(p_x(g)) &\leq (n_b+n_c)\omega(a)+n_b\omega(c)+n_c\omega(d)+n_d\omega(b) + 2\partial_\omega(x)\\
  &= \eta\left( (n_b+n_c+n_d)\omega(a)+n_b\omega(b)+n_c\omega(c)+n_d\omega(d) \right) + 2\partial_\omega(x)\\
  &= \eta(\partial_\omega(g) +(1-\varepsilon-\zeta)\omega(a)) + 2\partial_\omega(x) \\
  &\leq \eta(\partial_\omega(g)+\omega(a)) + 2(\omega(a)+\omega(b))\\
  &= \eta(\partial_\omega(g)+\omega(a)) + 2.
 \end{align*}
 Thus the length of $p_x(g)$ growths with a linear factor smaller than $1$ in terms of the length of $g$. Therefore the claim holds.
 For instance one could take $\delta =0.86$ and $C=50$ or $\delta=0.96$ and $C=16$.
\end{proof}
This completes the proof of the following proposition:
\begin{pro}\label{pro:solvableConstraintedEquations}
 If $n\geq3$ and $(g,\gamma)$ is a good pair with active constraint $\gamma$ with $\supp(\gamma)\subset\{X_1,\dotsc,X_{2n}\}$
 then the constrained equation $(R_n(X_1,\dotsc,X_{2n})g,\gamma)$ is solvable.\qed
\end{pro}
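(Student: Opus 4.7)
The plan is straightforward given the machinery assembled just before the statement: combine the pigeonhole principle on succeeding sequences with the lemma that appears right after Proposition~\ref{pro:existsNextPair}.

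First, I would fix a good pair $(g,\gamma)$ with $\gamma\in\Red_{\act}$ and consider its succeeding sequence $(g_k,\gamma_k)_{k\geq 0}$. By construction, each $\gamma_k$ belongs to $\Red_{\act}$, which is a finite set (at most $90$ reduced constraints by Lemma~\ref{lem:90Constraints}). So the second coordinate is automatically confined to a finite set.

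Next, I would control the first coordinate. By definition of the succeeding pair, $g_{k+1}=p_{x_k}(g_k)$ for some $x_k\in S$, so $g_k\in\textup{Suc}_{k+1}^{g}$ for every $k$. The Corollary preceding the proposition asserts that the sequence of sets $\textup{Suc}_n^g$ stabilizes at a finite set; this is the essential use of the contracting property of $G$ with respect to the weighted length $\partial_\omega$. In particular, there is some finite set $F\subseteq G'$ such that $g_k\in F$ for all $k$. Combined with the previous paragraph, the sequence $(g_k,\gamma_k)$ lives in the finite set $F\times\Red_{\act}$.

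By the pigeonhole principle there exist indices $i<j$ with $(g_i,\gamma_i)=(g_j,\gamma_j)$. Applying the lemma following Proposition~\ref{pro:existsNextPair}, we conclude that $(R_n g,\gamma)$ is solvable for every $n\geq 3$, which is exactly the claim.

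The only substantive ingredient is the finiteness of $\bigcup_k\textup{Suc}_k^g$, which has already been secured by the weight computation of Bartholdi: the map $p_x$ strictly shrinks the $\partial_\omega$-length of any element longer than a fixed constant~$C$, so iterating $p_x$ in all possible ways from $g$ eventually traps every descendant inside the (finite) ball of $\partial_\omega$-radius at most $C/(1-\delta)$ plus a constant. No further case analysis or GAP verification is needed at this stage: all the delicate work (existence of a successor in $\Gamma^q(\gamma)$ staying in $\Red_{\act}$, and the implication (*) relating the solvability of the successor equation to the original one) was done in Proposition~\ref{pro:existsNextPair} and the construction of $\Gamma^q$. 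The main conceptual point here is simply that "good pair" is a property preserved along the succeeding sequence and that the sequence eventually cycles.
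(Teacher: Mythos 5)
Your proposal is correct and follows the same route the paper takes: contraction of the weighted length $\partial_\omega$ under the maps $p_x$ forces $\bigcup_k\textup{Suc}_k^g$ to be finite, so the succeeding sequence $(g_k,\gamma_k)$ lives in a finite set and must repeat, and the lemma following Proposition~\ref{pro:existsNextPair} then gives solvability of $(R_n g,\gamma)$ for all $n\ge3$. The paper leaves the pigeonhole step implicit (it simply states ``This completes the proof'' after the stabilization corollary), while you spell it out; the two arguments are otherwise identical.
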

\begin{cor}
 The Grigorchuk group $G$ has commutator width at most $3$.
\end{cor}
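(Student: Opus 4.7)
The plan is to combine Proposition~\ref{pro:solvableConstraintedEquations} with Lemma~\ref{lem:existsGoodGamma} in a direct way; no further work is needed.

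Given an arbitrary $g \in G'$, let $q = \tau(g) \in G'/K'$ be its image. By Lemma~\ref{lem:existsGoodGamma}, there exists an active reduced constraint $\gamma \in \Red_{\act}$ such that $(q,\gamma)$ is a good pair, and hence $(g,\gamma)$ is a good pair in the element-level sense. Since $\gamma \in \Red$ we have $\supp(\gamma) \subset \langle X_1,\dots,X_5 \rangle \subset \langle X_1,\dots,X_6\rangle$, so the hypothesis of Proposition~\ref{pro:solvableConstraintedEquations} is met with $n = 3$.

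Applying Proposition~\ref{pro:solvableConstraintedEquations} with $n = 3$, the constrained equation $(R_3(X_1,\dots,X_6)\, g,\gamma)$ is solvable. A solution consists of elements $h_1,\dots,h_6 \in G$ with
\[ [h_1,h_2][h_3,h_4][h_5,h_6]\, g = \id, \]
so that $g^{-1} = [h_1,h_2][h_3,h_4][h_5,h_6]$, and consequently
\[ g = [h_6,h_5][h_4,h_3][h_2,h_1] \]
is expressed as a product of three commutators. Since $g \in G'$ was arbitrary, the commutator width of $G$ is at most $3$. The only step that required serious work was Proposition~\ref{pro:solvableConstraintedEquations} itself, together with the finiteness of the succeeding-sequence orbits; here we simply harvest the conclusion.
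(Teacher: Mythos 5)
Your proof is correct and follows exactly the same route as the paper's: combine Lemma~\ref{lem:existsGoodGamma} (existence of an active good constraint for each $q\in G'/K'$) with Proposition~\ref{pro:solvableConstraintedEquations} applied with $n=3$, then read off the three-commutator expression from the solution of $R_3 g$. The paper states this in one sentence; you have simply spelled out the bookkeeping.
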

\begin{proof}
 This is a direct consequence of the proposition and Lemma~\ref{lem:existsGoodGamma}.
\end{proof}

\subsection{Product of 2 commutators}
The case of products of two commutators can be reduced to the case of three 
commutators by using the same method as before.

We can compute the orbits of $Q^4/U_2$ and take a representative system 
denoted by $\Red^4$.
It turns out that there are $86$ orbits and we can check that there are again enough active constraints:
\begin{lem} \label{lem:existsGoodGammaForRed4}
 For each $q\in\RNK{G'}{K'}$ there is $\gamma\in \Red^4_{\act}$ such that $(q,\gamma)$ is a 
 good pair.
\end{lem}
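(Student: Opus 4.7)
The plan is to reduce the statement to a finite verification, in direct analogy with Lemma~\ref{lem:existsGoodGamma}. The first observation is that both quantifiers range over finite sets: by the index chain of Lemma~\ref{lem:subgroupsOfG} we have $[G:K']=8\cdot2\cdot4\cdot16=1024$, so $\RNK{G'}{K'}$ has order $128$; and $\Red^4$ has $86$ elements by construction, so its subset $\Red^4_{\act}$ of constraints with nontrivial activity is finite and explicitly enumerable.

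Next, I adapt the good-pair characterisation used for $\Red$ to the present setting. Exactly as in the lemma preceding Corollary~\ref{cor:finiteCommutatorWidthKimpliesBoundedConstraintedCommutators}, if $s$ is a solution of $(R_ng,\gamma)$ with $\supp(\gamma)\subset\langle X_1,\dotsc,X_4\rangle$, then $s(X_i)\in K$ for $i\geq5$ and hence $s(R_n)=s(R_2)\cdot k'$ for some $k'\in K'$; conversely any solution in $\RNK{G}{K'}$ can be lifted back to a genuine solution at sufficiently high genus. Consequently $(q,\gamma)$ with $q\in\RNK{G'}{K'}$ and $\gamma\in\Red^4$ is a good pair if and only if the equation $R_2\tau(g)=\id$ in the finite quotient $\RNK{G}{K'}$ admits a solution $s\colon F_{\mathcal{X}}\to\RNK{G}{K'}$ with $s(X_i)\in\rho^{-1}(\gamma(X_i))$ for $i=1,\dotsc,4$. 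This is a decidable property by direct enumeration inside $\RNK{G}{K'}$.

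With the reduction in hand, the verification consists of looping over all $q\in\RNK{G'}{K'}$ and, for each $q$, exhibiting some $\gamma\in\Red^4_{\act}$ making $(q,\gamma)$ a good pair. This is implemented in GAP as the obvious variant of the procedure \gapinline{verifyLemmaExistGoodConstraints} used to establish Lemma~\ref{lem:existsGoodGamma}, with $\Red$ replaced by $\Red^4$ and $R_3$ replaced by $R_2$. A naive enumeration over all $4$-tuples in $\RNK{G}{K'}$ would require $1024^4$ trials for each of the $128\cdot 86$ pairs, so the practical obstacle is keeping the cost tractable; this is handled by fixing $s(X_1),s(X_2),s(X_3)$ inside the prescribed $K'$-cosets and then solving for $s(X_4)$ directly from $R_2\tau(g)=\id$. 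No conceptual new ingredient beyond those already developed is required; the content of the lemma is that the resulting finite search succeeds for every one of the $128$ values of $q$.
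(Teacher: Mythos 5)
Your proposal is correct and takes essentially the same approach as the paper: reduce the statement to a finite search by the genus-$2$ analogue of the good-pair criterion (solvability of $R_2\tau(g)$ in $\RNK{G}{K'}$ under the coset constraint $\rho^{-1}\circ\gamma$), and then defer to a GAP computation, which is precisely what the paper's one-line proof via \gapinline{verifyLemmaExistGoodGammasForRed4} does. One side remark: your closing sentence about ``solving for $s(X_4)$ directly'' is loose, since $[s(X_3),s(X_4)]=c$ is quadratic rather than linear in $s(X_4)$ and the search really ranges over the $64$ elements of the coset $\rho^{-1}(\gamma(X_4))$ (not all $1024$ of $\RNK{G}{K'}$), but this is only an implementation aside and does not affect the correctness of the argument.
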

\begin{proof}
 This can be checked in GAP with the function\newline \gapinline{verifyLemmaExistGoodGammasForRed4}.
\end{proof}

  To formulate an analog of Proposition~\ref{pro:existsNextPair} we literally transfer the definition 
  of the function $\Gamma^{q}$ to the case $n=2$. Denote the new function $\Gamma^{q,2}$.
  For a constraint $\gamma\colon F_{\mathcal{X}} \to Q$
  with nontrivial activity it produces
  a finite set $\Gamma^{q,2}(\gamma)$ of constraints $\gamma'\colon F_{\mathcal{Y}} \to Q$ 
  for an equation $(R_3g',\gamma')$.
  The role of the specialized variable $Y_{6,1}$ is now played by
  $Y_{4,1}$. As above, we denote ${\mathcal V} = \{Y_{1,1},\ldots Y_{3,2}\}$ the
  set of variables occurring in $R_3$.

\begin{pro}\label{pro:existsNextPair4}
  For each good pair $(q,\gamma)$ with $q\in\RNK{G'}{K'}$ and
  $\gamma\in\Red^4_{\act}$ the set $\Gamma^{q,2}(\gamma)$ contains
  some active constraint $\gamma'$ such that for all $g$ with $g K'=q K'$
  the pair
  $\left((g\at{2})^{\rep(\gamma'(Y_{4,1}))}\cdot g\at{1},
  \red(\gamma'|_{\mathcal{V}})\right)$ is a good pair.
\end{pro}
\begin{proof}
  The proof is the same as for
  Proposition~\ref{pro:existsNextPair}. Recalling that for
  $g\in\RNK{G'}{K'}$ we denote by $\overline g$ its image in
  $\RNK{G'}{(K\times K)}$, the corresponding formula which needs to be
  checked is
  \[\forall q\in\RNK{G'}{K'}\;
    \forall \gamma\in\Red^4_{\act}\;
    \exists \gamma'\in \Gamma^{q,2}(\gamma)\;
    \forall r\in\RNK{G'}{K'}\text{ with }\overline r=\overline p_{\rep(\gamma'(Y_{4,1}))}(q))\colon
    \mathcal{P}(q,\gamma) \Rightarrow \mathcal{P}(r,\gamma').\]
 This can be checked in GAP with the function \gapinline{verifyPropExistsSuccessor}. 
\end{proof}
The resulting succeeding pairs are now equations of genus $3$ with an active constraint. 
Those are already shown to be solvable 
by Proposition~\ref{pro:solvableConstraintedEquations}. Hence we have the following
corollary which improves Proposition~\ref{pro:solvableConstraintedEquations}:
\begin{cor}\label{cor:solvableConstraintedEquations}
If $n\geq2$ and $(g,\gamma)$ is a good pair with active constraint $\gamma$ with 
 $\supp(\gamma)\subset\{X_1,\dotsc,X_{2n}\}$
 then the constrained equation $(R_n(X_1,\dotsc,X_{2n})g,\gamma)$ is solvable. 
\end{cor}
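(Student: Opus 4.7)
The plan is to split on whether $n \ge 3$ or $n = 2$. The case $n \ge 3$ is already Proposition~\ref{pro:solvableConstraintedEquations}, so the entire work lies in the base case $n = 2$, which I would reduce to the genus-$3$ case by one application of Proposition~\ref{pro:existsNextPair4}.

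For $n = 2$, let $(g,\gamma)$ be a good pair with $\gamma$ active and $\supp(\gamma)\subset\{X_1,\dotsc,X_4\}$. First I would normalize the constraint: by Lemma~\ref{lem:solvabilityWithReducedConstraint}, solvability of $(R_2 g,\gamma)$ is equivalent to solvability of $(R_2 g,\gamma\circ\varphi_\gamma)$ with $\gamma\circ\varphi_\gamma\in\Red^4$, so there is no loss of generality in assuming $\gamma\in\Red^4_{\act}$ (the hypothesis that $\gamma$ is active should be preserved or restored in this step; see the obstacle comment below). Setting $q=\tau(g)$, the pair $(q,\gamma)$ is a good pair in $\RNK{G'}{K'}\times\Red^4_{\act}$, and Proposition~\ref{pro:existsNextPair4} produces a constraint $\gamma'\in\Gamma^{q,2}(\gamma)$, which is active by the analog of~\eqref{def:Gammaq}, together with an element $x=\rep(\gamma'(Y_{4,1}))\in S$ such that
\[ \bigl((g\at 2)^{x}\cdot g\at 1,\;\gamma'|_{F_{\{Y_{1,1},\dotsc,Y_{3,2}\}}}\bigr) \]
is again a good pair. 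Its equation is $R_{2\cdot 2-1}\cdot(g\at 2)^{x}g\at 1 = R_3 \cdot (g\at 2)^{x}g\at 1$, of genus $3$, with active constraint on at most $6$ variables.

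At this point Proposition~\ref{pro:solvableConstraintedEquations} applies to the successor pair and yields solvability of $\bigl(R_3\cdot(g\at 2)^{x}g\at 1,\;\gamma'|_{F_{\{Y_{1,1},\dotsc,Y_{3,2}\}}}\bigr)$. By property~(*) built into the construction of $\Gamma^{q,2}$, solvability of the successor constrained equation implies solvability of the original $(R_2 g,\gamma)$, which is what we wanted.

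The main obstacle I expect is the very first step: ensuring that after replacing $\gamma$ by its $U_2$-reduced representative the constraint is still active, so that Proposition~\ref{pro:existsNextPair4} is applicable. One way to address this is to note that for any good pair with given image $q$, Lemma~\ref{lem:existsGoodGammaForRed4} guarantees the existence of \emph{some} $\gamma\in\Red^4_{\act}$ forming a good pair with $q$; combined with the fact that activity of a constraint only depends on its $U_2$-orbit through the abelianization/symplectic picture underlying the definition of $U_2$, this should allow us to select the reduced constraint in $\Red^4_{\act}$ without altering the solvability of the equation. The remainder of the argument is a direct chaining of the two preceding propositions.
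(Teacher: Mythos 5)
Your proof matches the paper's argument, which disposes of this corollary in a single sentence immediately after Proposition~\ref{pro:existsNextPair4}: the succeeding pair obtained from a genus-$2$ good pair with active constraint is a genus-$3$ good pair with active constraint, Proposition~\ref{pro:solvableConstraintedEquations} makes that equation solvable, and property~(*) lifts a solution back to the genus-$2$ equation. The one obstacle you flag---whether $U_2$-reduction can destroy activity of the constraint---is not actually an issue, and you essentially see this yourself: $\act(\gamma)=\act\circ\gamma\colon F_{\mathcal X}\to C_2$ is a homomorphism (activity is a homomorphism on $Q$ since $K$ is inactive), $\varphi_\gamma\in U_2$ is an automorphism of $F_{\mathcal X}$, and precomposing a nontrivial homomorphism with an automorphism yields a nontrivial homomorphism, so $\gamma\circ\varphi_\gamma$ is active whenever $\gamma$ is.
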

Together with Lemma~\ref{lem:existsGoodGammaForRed4} this proves the first part of Theorem~\ref{thm:CWGrigorchukGroup}.
\begin{cor}\label{cor:KhasCW2}
 $K$ has commutator width at most $2$. 
\end{cor}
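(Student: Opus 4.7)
The plan is to show directly that every $k \in K'$ is a product of two commutators of elements of $K$. Although Corollary \ref{cor:solvableConstraintedEquations} handles every constrained equation $(R_2 g, \gamma)$ with $\gamma$ \emph{active} and $(g, \gamma)$ a good pair, the constraint that forces a solution to lie in $K$ is the trivial constraint sending every variable to $\id \in Q$, which is not active. The strategy is to factor such a trivial-constraint equation into two active-constraint equations by means of the branch structure, and then apply Corollary \ref{cor:solvableConstraintedEquations} to each piece.

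Given $k \in K'$, substituting $X_i = \pair{X_{i,1}, X_{i,2}}$ into $R_2(X_*)\, k = \id$ translates the membership $X_i \in K$ into the joint constraint $\pair{\pi(X_{i,1}), \pi(X_{i,2})} \in \omega^{-1}(\id)$ on each pair, via the branch-structure map $\omega$. Because every activity is trivial, Lemma \ref{lem:commonVar} does not merge the two coordinates, so the equation separates into the pair $R_2(X_{*,j})\, k\at{j} = \id$ for $j = 1,2$, coupled only through the joint constraints on each $(X_{i,1}, X_{i,2})$; the computation in the proof of Lemma \ref{lem:pIsDefinedModK'} gives $k\at{1}, k\at{2} \in K \times K \subset G'$. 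Rather than solve each sub-equation with trivial marginal constraint—which would require $k\at{j} \in K'$, a condition that need not hold—I would select the joint constraint so that both marginals are active. Each sub-equation then becomes a good pair with an active constraint, and Corollary \ref{cor:solvableConstraintedEquations} delivers solutions $X_{*,1}$ and $X_{*,2}$ in $G$. Reassembling via $X_i = \pair{X_{i,1}, X_{i,2}}$ produces $X_i \in K$ solving the original equation.

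The main obstacle is ensuring that such a fully active joint constraint exists and yields a good pair on each side for every $k \in K'$. Since $k\at{j} \in G'$, Lemma \ref{lem:existsGoodGammaForRed4} already produces an active marginal constraint for each sub-equation individually; the joint requirement that both marginals can be chosen simultaneously inside $(\omega^{-1}(\id))^4$ while keeping both $(\tau(k\at{j}), \vec q_j)$ good pairs is a finite condition on the image of $k$ in $G/K'$, entirely analogous in character to Lemma \ref{lem:existsGoodGammaForRed4} itself, and is handled by the accompanying GAP verification. Together with Corollary \ref{cor:solvableConstraintedEquations} this yields the decomposition of every $k \in K'$ as two commutators in $K$, completing the proof that $K$ has commutator width at most~$2$.
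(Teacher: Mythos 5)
Your proposal is correct and follows essentially the same route as the paper: split $R_2 k$ along the two coordinates, replace the trivial (inactive) constraint by a joint constraint compatible with $\omega^{-1}(\id)$ whose marginals are active and form good pairs with $k\at 1, k\at 2 \in K\times K$, solve each coordinate by Corollary~\ref{cor:solvableConstraintedEquations}, and reassemble. The only difference is that the paper exhibits fixed constraints $\gamma_1=(\id,\id,\pi(bad),\id)$ and $\gamma_2=(\id,\id,\id,\pi(ca))$ that work uniformly for all $k\in K'$, whereas you leave the choice implicit and delegate its existence to the GAP check — the same finite verification \texttt{verifyCorollaryFiniteCWK} the paper invokes.
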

\begin{proof}
 To show that $K$ has commutator width at most $2$ it is sufficient to show that 
 the constrained equations $(R_2g,\id)$ have solutions for all $g\in K'$. 
 Since $\id$ has trivial activity one cannot directly apply 
 Proposition~\ref{pro:solvableConstraintedEquations}.  However one can check that all 
 pairs $(h,\gamma_1),(f,\gamma_2)$ such that $g=\pair{h,f}$ and
 $\gamma_1=(\id,\id,\pi(bad),\id)$, $\gamma_2=(\id,\id,\id,\pi(ca))$ are good pairs with active
 constraints and hence admit solutions $s_1,s_2\colon F_4\to G$.
 
 We can then define the map $s\colon F_4\to G, X_i\mapsto \pair{s_1(X_i),s_2(X_i)}$; it is a solution
 for $R_2g$ and $s(X_i)\in K$ for all $i=1,\dotsc,4$. Therefore the commutator width of $K$ is at most $2$.
 
 This can be checked in GAP with the function \gapinline{verifyCorollaryFiniteCWK}. 
\end{proof}

\subsection{Not every element is a commutator}
The procedure used to prove that every element is a product of two commutators 
can not be used to prove that every element is a
commutator since for equations of genus $1$ the 
genus does not increase by passing to a succeeding pair. 

In fact not every element $g\in G'$ is a commutator. This can be seen by
considering finite quotients. A commutator in the group would be also a
commutator in the quotient group. 

We will define an epimorphism to a finite group with commutator width $2$.

Analogously to the construction of $\Psi\colon \Aut(T_n )\to \Aut(T_n) \wr S_n$
we can define a homomorphism $\Psi_n\colon G \to G\wr_{2^n}(\RNK{G}{\Stab_G(n)})$ by
mapping an element $g$ to its actions on the subtrees with root in level $n$
and the activity on th $n$-th level of the tree.

Consider the following epimorphism:
\begin{align*}
  \germ\colon\left\{\begin{array}{r@{\;}l}
               G&\to\left<b,c,d\right>\simeq C_2\times C_2, \\
               a&\mapsto\id,\\
               b,c,d&\mapsto b,c,d.
             \end{array}\right.
\end{align*}
It extends to an epimorphism
$\germ_n\colon G\wr_{2^n} \RNK{G}{\Stab_G(n)}\to \germ(G)\wr_{2^n}
\RNK{G}{\Stab_G(n)}$.  We will call the image $\germ(G)=:G_0$ the
$0$-th \emph{germgroup} and furthermore
$G_n := \germ_n \circ \Psi_n(G)$ the $n$-th \emph{germgroup}.

The $4$-th germgroup of the Grigorchuk group has order $2^{26}$ and has commutator width $2$.
If the FR package is present this group can be constructed in GAP with 
the following command. 
\begin{lstlisting}
   gap> Range(EpimorphismGermGroup(GrigorchukGroup,4))
\end{lstlisting}
There is an element in the commutator subgroup of this germgroup which
is not a commutator. This element is part of the precomputed data and
can be accessed in GAP as \gapinline{PCD.nonCommutatorGermGroup4}. For the
computation of this element we used the character table of $G_4$. For more
details see Section \ref{sec:precomputation}.

 A corresponding preimage in $G$ with a minimal number of states is the
 automaton shown in Figure~\ref{fig:noncomm}. The construction of the element
 can be found in the file \filename{gap/precomputeNonCommutator.g}.
 With the representation in standard
 generators it is easy to show using the homomorphism $\pi$ on the generators
 that this element is even a member of $K$.
\begin{figure}
\includegraphics[width=\textwidth]{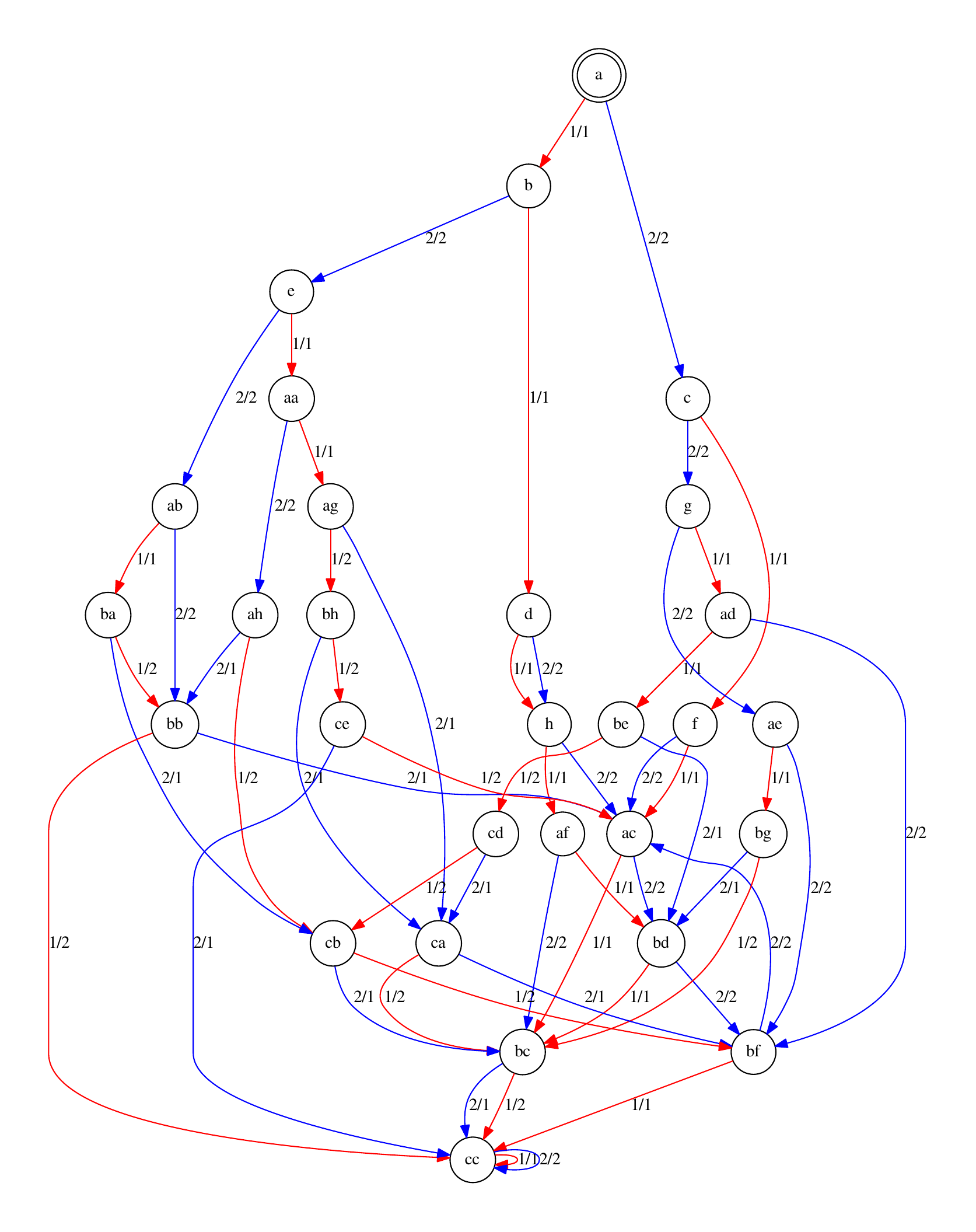}
 \caption[Noncommutator]{Element of the derived subgroup of the Grigorchuk group which is not a commutator. In standard generators:

 $(acabacad)^3\allowbreak acab(ac)^2\allowbreak (acabacad)^2\allowbreak 
 (acab)^3\allowbreak acadacab(ac)^2\allowbreak (acabacad)^2\allowbreak (acabacadacab(ac)^3\allowbreak
 abacad(acab)^2)^5\allowbreak acabacadacab(ac)^2\allowbreak (acabacad)^2\allowbreak (acabacadac)^2\allowbreak
 (abac)^3\allowbreak adacab(ac)^2\allowbreak (acabacad)^3\allowbreak acab(ac)^2\allowbreak 
 (acab(ac)^3\allowbreak abacad)^2\allowbreak acabacad((acabacadacab(ac)^2)^2\allowbreak 
 acabacad(acab)^3\allowbreak acadacab(ac)^2)^2\allowbreak ((acabacad)^3acab)^2\allowbreak 
 acab(acabacad)^2acab(ac)^2(acabacad)^3acab(ac)^3aba$}\label{fig:noncomm}
\end{figure}
This finishes the proof of Theorem~\ref{thm:CWGrigorchukGroup}.
\subsection{Bounded conjugacy width}
In~\cite{Fink:Conjugacy_growth} it is proven that $G$ has finite bounded conjugacy width. Here we give an explicit bound 
on this width.
\begin{pro}\label{pro:productOf6Conjugates}
 Let $g$ be in $G'$. Then the equation 
 \begin{align*}
  a^{X_1}a^{X_2}a^{X_3}a^{X_4}a^{X_5}ag=\id
 \end{align*}
is solvable in $G$. 
\end{pro}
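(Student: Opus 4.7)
The plan is to extend the good-pair / succeeding-pair framework of Section~\ref{sec:good_pairs} to the quadratic equation
\[\mathcal{E}_g := a^{X_1}a^{X_2}a^{X_3}a^{X_4}a^{X_5}a\cdot g,\]
which is oriented quadratic in $5$ variables. Although $\mathcal{E}_g$ is not of the form $R_n g$ (its normal form in the sense of Theorem~\ref{Thm:equationNormalForm} is $\mathcal{O}_{0,6}$), the recursive strategy that proves Theorem~\ref{thm:CWGrigorchukGroup} applies with only minor modifications.

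First, I would regroup adjacent conjugates of $a$ via the identities
\[a^{X_{2i-1}}a^{X_{2i}} = [a,\,X_{2i}X_{2i-1}^{-1}]^{X_{2i-1}}\quad\text{and}\quad a^{X_5}a = [a,\,X_5^{-1}]^{X_5},\]
rewriting $\mathcal{E}_g$ as
\[[a,U_1]^{V_1}\,[a,U_2]^{V_2}\,[a,V_3^{-1}]^{V_3}\,g.\]
This presents $\mathcal{E}_g$ as a constrained variant of $R_3\,g$: the first entry of each of the three commutators is pinned to the constant $a\in G$, and the last pair $(U_3,V_3)$ is coupled by $U_3V_3=\id$, leaving $5$ free variables. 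Since the activity of $a^{X_i}$ is always $(1,2)$, the product of the six involutive factors has trivial activity, which is consistent with $g\in G'\subset\Stab_G(1)$.

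Second, following the template of Section~\ref{sec:good_pairs}, I would define good pairs $(g,\gamma)$ for $\mathcal{E}_g$ and construct succeeding pairs: substituting $X_i=\pair{Y_{i,1},Y_{i,2}}\sigma_i$ via $\Psi$ produces a pair of state equations whose constants involve $g\at 1$ and $g\at 2$ together with some fixed conjugates of $a$. After applying Theorem~\ref{Thm:equationNormalForm} and reducing the constraints modulo the $U_n$-action (Lemma~\ref{lem:90Constraints}), the resulting pair is again of the same type, but with constants of strictly smaller $\omega$-length by Lemma~\ref{lem:laurentsweights}. Consequently the succeeding-pair graph is finite, the iteration cycles after finitely many steps, and a solution can be read back, exactly as in Proposition~\ref{pro:existsNextPair}.

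The main obstacle is the analogue of Lemma~\ref{lem:existsGoodGamma}: one must verify that for every $q\in G'/K'$ there is a reduced active constraint $\gamma$ such that $(q,\gamma)$ is a good pair for $\mathcal{E}_g$. This is a finite computation, to be performed in GAP by a function analogous to \gapinline{verifyLemmaExistGoodConstraints}. That this verification can succeed only with at least $5$ free variables is consistent with the lower bound in Corollary~\ref{cor:productOf6Conjugates}: the same check carried out for the $4$-conjugate analogue $a^{X_1}a^{X_2}a^{X_3}a\cdot g$ must fail, and pinpointing the obstruction at exactly $5$ variables is the computationally delicate part of the argument.
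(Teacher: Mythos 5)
Your proposal diverges from the paper's argument at a point where it introduces a concrete error, and the detour it takes is also unnecessary. The paper's proof does not build a new succeeding-pair recursion for the class of $a$-conjugate equations. It performs \emph{one} state decomposition: replacing each $X_i$ by $\pair{Y_i,Z_i}\act(\gamma(X_i))$ and normalizing, the equation $a^{X_1}\cdots a^{X_5}a\,g$ becomes $R_2(X_*)(g\at 2)(g\at 1)$, an honest genus-$2$ commutator equation of type $\mathcal O_{2,1}$. The reason for this dimension collapse is that $a$ has trivial states, so each factor $a^{X_i}$ decomposes as $\pair{W_i^{-1},W_i}(1,2)$ for a single fresh variable $W_i$; tracing through the two coordinate equations and eliminating one variable gives a four-variable quadratic equation that normalizes to $\mathcal O_{2,1}$ (one can also check this topologically: the decagon associated to the decomposed word has $\chi=-3$ and its two constant edges join into a single boundary circle, giving genus $2$, one boundary component). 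Once you are in $\mathcal O_{2,1}$-land with an active good constraint, Corollary~\ref{cor:solvableConstraintedEquations} already gives solvability; the only new finite content is verifying that the required constraints exist, which is what \gapinline{verifyExistGoodConjugacyConstraints} checks.

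Your claim that ``the resulting pair is again of the same type'' is therefore false: the successor is not another product-of-conjugates-of-$a$ equation, and the recursion you envision does not close inside that class. Your rewriting of $\mathcal E_g$ as $[a,U_1]^{V_1}[a,U_2]^{V_2}[a,V_3^{-1}]^{V_3}g$ is correct as an identity but misleading as a classification: pinning the first entry of each commutator to the constant $a$ and coupling $U_3V_3=\id$ changes the underlying surface, so this is not ``a constrained variant of $R_3g$'' (it is genus $0$, not genus $3$), and the $\Gamma^q$ machinery, Lemma~\ref{lem:commonVar}, and Proposition~\ref{pro:existsNextPair} cannot be imported for it without essentially reproving them. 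You did correctly identify the remaining finite verification (analogous to Lemma~\ref{lem:existsGoodGamma}), and your observation that the analogous check for $a^{X_1}a^{X_2}a^{X_3}ag$ fails is in the right spirit for Corollary~\ref{cor:productOf6Conjugates}; but the step in between---why the decomposed equation is solvable---is exactly the step your proposal gets wrong, since it neither recognizes the drop into $R_2$ nor supplies a working substitute.
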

\begin{proof}
We need to solve the constrained equation $(\eq{E}=a^{X_1}a^{X_2}a^{X_3}a^{X_4}a^{X_5}ag,\gamma)$ for
some constraint $\gamma$. Independently of the chosen constraint, replacement of
the variable $X_i$ by $\pair{Y_i,Z_i}\act(X_i)$ leads after normalization to an equivalent equation 
$R_2(g\at{2})(g\at{1})$. Similarly to the construction of $\Gamma^q$ in the previous section, one can find for each $q\in\RNK{G'}{K'}$ a constraint $\gamma$ such that
$\gamma(\eq{E}^{\id*\pi})=\id$ and $\gamma'\in\Gamma_1(\gamma)$ such that for all 
$g\in\pi^{-1}(q)$ the pairs $(g\at{2}g\at{1},\gamma')$ are good pairs and $\gamma'$ is 
an active constraint.
Therefore the constrained equation $(R_2(g\at{2})(g\at{1}),\gamma')$ is solvable
by Corollary~\ref{cor:solvableConstraintedEquations} 
for each $g\in G'$ and hence the equation 
$a^{X_1}a^{X_2}a^{X_3}a^{X_4}a^{X_5}ag$.
This can be checked in GAP with the function \gapinline{verifyExistGoodConjugacyConstraints}.
\end{proof}	
\begin{lem}
 There exits an element $g\in G'$ such that the equation 
 \begin{align*}
  a^{X_1}a^{X_2}a^{X_3}ag=\id
 \end{align*}
 is not solvable.
\end{lem}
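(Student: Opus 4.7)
The plan is to mimic the strategy used to exhibit a non-commutator element of $G'$: pass to a finite quotient, apply a character-theoretic obstruction, and lift back. Concretely, I would use one of the germgroups $G_n := \germ_n \circ \Psi_n(G)$ (the $4$-th germgroup $G_4$ of order $2^{26}$ is a natural first candidate, since its character table has already been computed in the previous subsection). The projection $\pi_n \colon G \twoheadrightarrow G_n$ maps $G'$ onto $G_n'$, so any element $\bar g \in G_n'$ that is not a product of $4$ conjugates of $\bar a := \pi_n(a)$ in $G_n$ will lift to an element $g \in G'$ that is not a product of $4$ conjugates of $a$ in $G$.

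The character-theoretic obstruction I would invoke is Frobenius's formula: in a finite group $H$, an element $\bar g$ lies in the product set $C^k$ for a conjugacy class $C$ containing a representative $c$ if and only if
\[
\sum_{\chi \in \operatorname{Irr}(H)} \frac{\chi(c)^k\,\overline{\chi(\bar g)}}{\chi(1)^{k-1}} \ne 0,
\]
and in fact this sum is a nonnegative multiple of the number of factorizations. Applied with $H = G_n$, $C$ the conjugacy class of $\bar a$, and $k = 4$, the goal becomes to find some $\bar g \in G_n'$ for which this sum vanishes. This is a finite check once $\operatorname{Irr}(G_n)$ and the value $\chi(\bar a)$ for every irreducible $\chi$ are known, precisely the data already used in the non-commutator construction.

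Having located such a $\bar g$ in $G_n$, I pick any preimage $g \in G'$ under $\pi_n$ (which exists because $\pi_n(G') = G_n'$). If $g = a^{X_1} a^{X_2} a^{X_3} a$ held in $G$ for some $X_i \in G$ (equivalently, the displayed equation had a solution), then applying $\pi_n$ would express $\bar g$ as a product of $4$ conjugates of $\bar a$ in $G_n$, contradicting the choice of $\bar g$. This forces the equation $a^{X_1} a^{X_2} a^{X_3} a g = \id$ to be unsolvable, proving the lemma.

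The main obstacle is purely computational: one must verify that $G_4$ (or a suitable $G_n$) really does contain an element of its derived subgroup for which the Frobenius sum vanishes. If $G_4$ turns out to be insufficient, $G_5$ or a further germgroup can be tried; the same method from the non-commutator section (using \texttt{EpimorphismGermGroup} in GAP and the precomputed character tables) applies verbatim, and a GAP verification routine analogous to the one producing \gapinline{PCD.nonCommutatorGermGroup4} will witness the required $\bar g$.
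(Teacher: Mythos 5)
Your strategy is genuinely different from the paper's and, as written, leaves a real gap. The paper does not attack the four-conjugates equation directly with character theory; instead it exploits the branching structure of $G$: applying $\Psi$ and the normal-form reduction of Theorem~\ref{Thm:equationNormalForm} to $a^{X_1}a^{X_2}a^{X_3}a\,g$, one finds (independently of the activity pattern of the $X_i$) that the resulting equation in the two coordinates normalizes to the single genus-$1$ equation $[X_1,X_2]\,(g\at{2})(g\at{1})$. Thus $a^{X_1}a^{X_2}a^{X_3}a\,g=\id$ is solvable only if $(g\at{2})(g\at{1})$ is a commutator. The paper then sets $g=\pair{caw,ac}$, where $w$ is the explicit non-commutator from Figure~\ref{fig:noncomm}; one checks $\pair{\pi(caw),\pi(ac)}\in Q_1$ with $\omega$-image $\id$, so $g\in K<G'$, and $(g\at 2)(g\at 1)=ac\cdot caw=w$. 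Since $w$ is not a commutator, the equation is unsolvable, and no new computation beyond the already-verified non-commutator is required.

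Your route, by contrast, needs a fresh witness: an element $\bar g\in G_n'$ such that the Frobenius sum for $C(\bar a)^4$ vanishes. This is not established by the paper's existing data (which only certifies a non-commutator, i.e.\ the $k=2$ obstruction), and it is not a priori clear that $G_4$---or any particular germ quotient---has such an element; the set of products of four conjugates of $\bar a$ could conceivably fill all of $G_4'$. So your plan is a legitimate alternative attack, and the lift-back argument (via surjectivity of $\pi_n$ on derived subgroups) is correct, but until the Frobenius-sum check actually succeeds for some $n$ you have a proposal rather than a proof. The paper's reduction via the wreath decomposition is what removes this contingency: it converts ``not a product of $4$ conjugates of $a$'' into ``some state product is not a commutator,'' a fact already in hand.
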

\begin{proof}
As before independently of the activities of a possible constraint $\gamma$ and of the element $g\in G'$
the normalform of $\tilde\Phi_\gamma(a^{X_1}a^{X_2}a^{X_3}ag)$ turns out to
be $R_1(g\at{2}) g\at{1}$. So all there is to prove is that there is an element $h\in K$
where the products of states $h\at{2}\cdot h\at{1}$ is not a commutator.

The element $g$ displayed in Figure~\ref{fig:noncomm} provides such an element. It can easily be verified 
that $\pair{\pi(cag),\pi(ac)}\in Q_1$ and $\omega(\pair{\pi(cag),\pi(ac)})=\id$. Thus by
the properties of the branch structure we have $\pair{\pi(cag),\pi(ac)}\in K<G'$. 
\end{proof}
\begin{defi}[Conjugacy width~\cite{Fink:Conjugacy_growth}]
 The \emph{conjugacy width} of a group $G$ with respect to a generating set $S$ 
 is the smallest number $N\in\NN$ such that every element $g\in G$ is a product of
 at most $N$ conjugates of generators $s\in S$.
\end{defi}
\begin{cor}
 The Grigorchuk group $G$ with generating set $\{a,b,c,d\}$ has conjugacy width at most $8$.
\end{cor}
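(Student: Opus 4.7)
The plan is to decompose any element $g\in G$ as $g = g'w$ with $g'\in G'$ and $w$ a short word in the generators, then bound the two pieces separately: the piece $g'$ via the conjugacy bound in $G'$ obtained just above, and the piece $w$ via a diameter computation in the abelianization.

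First I would examine the finite group $G/G'$. From the relations $a^2 = b^2 = c^2 = d^2 = bcd = \id$ together with commutativity, the abelianization $G^{\mathrm{ab}}$ is isomorphic to $(C_2)^3$, with $\bar d = \bar b\bar c$. I would then verify that the Cayley graph of $(C_2)^3$ with respect to the images of the four generators $\{a,b,c,d\}$ has diameter $2$: the identity is at distance $0$; the four generators at distance $1$; and the three remaining elements $\bar a\bar b,\bar a\bar c,\bar a\bar d$ at distance $2$ (note that $\bar b\bar c=\bar d$, $\bar b\bar d=\bar c$, $\bar c\bar d=\bar b$ are already at distance $1$). In particular, for every $g\in G$ there is a word $w$ of length at most $2$ in $\{a,b,c,d\}$ such that $gw^{-1}\in G'$.

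Next I would apply Proposition~\ref{pro:productOf6Conjugates}: writing $g' := gw^{-1}\in G'$, we may express
\[ g' = a^{h_1}a^{h_2}a^{h_3}a^{h_4}a^{h_5}a \]
for suitable $h_i\in G$, a product of $6$ conjugates of the generator $a$. Since each of $a,b,c,d$ is trivially a conjugate of itself, the short word $w$ is a product of at most $2$ conjugates of generators. Combining,
\[ g = g'\cdot w \]
is a product of at most $6+2 = 8$ conjugates of elements of $\{a,b,c,d\}$, proving that the conjugacy width is at most $8$.

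There is no real obstacle in this argument: the only nontrivial inputs are Proposition~\ref{pro:productOf6Conjugates} (already established) and the diameter-$2$ claim for $(C_2)^3$, which is a direct eight-element verification. The bound is not claimed to be sharp, but together with the lower bound $6$ on elements of $G'$ from Corollary~\ref{cor:productOf6Conjugates}, it pins the conjugacy width of $G$ to the interval $\{6,7,8\}$.
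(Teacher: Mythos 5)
Your proof is correct and takes essentially the same approach as the paper: decompose $g$ into a $G'$-part (handled by the $6$-conjugate bound of Proposition~\ref{pro:productOf6Conjugates}) and a coset representative costing at most $2$ more conjugates of generators. The only cosmetic difference is that you derive the $2$-conjugate bound on representatives by computing the diameter of the Cayley graph of $G^{\mathrm{ab}}\cong(C_2)^3$, whereas the paper simply exhibits an explicit transversal $T=\{\id,a,d^aa,d^a,b,ab^a,ca^d,bd^a\}$ of $G/G'$ consisting of products of at most two conjugates of generators.
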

\begin{proof}
 The following set $T$ is a transversal of $\RNK{G}{G'}$:
 \[T=\{\id,a,d^aa,d^a,b,ab^a,ca^d,bd^a\}.\]
 Therefore, every element $g\in G$ can be written as $g=th$ with $t\in T$ and $h\in G'$. As every element of $G'$ is a product of at most $6$ conjugates of $a$ this proves the claim.

\end{proof}


This finishes the proof of Corollary~\ref{cor:productOf6Conjugates}.

\section{Proof of Theorem~\ref{thm:subgroups}}
We will prove the statement first for finite-index subgroups. 
\begin{pro}\label{pro:f.i.subgroupsFiniteCW}
  All finite-index subgroups $H \leq G$ have finite commutator width.
\end{pro}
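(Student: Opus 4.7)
The plan is to combine the bound on $\cw(K)$ from Corollary~\ref{cor:KhasCW2} with a structural input from the theory of regular branched groups and a theorem of Segal.

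First, I would invoke the standard fact that in a regular branched group every finite-index subgroup contains a rigid level-$n$ stabilizer $\operatorname{rist}_G(n)$ for some $n\in\NN$, see~\cite{Bartholdi-Grigorchuk-Sunik:BranchGroups}. In our setting $\operatorname{rist}_G(n)$ is isomorphic to $K^{2^n}$ (the internal direct product of the rigid stabilizers of the $2^n$ vertices at level $n$), and it is normal in $G$ because conjugation by an element of $G$ merely permutes those vertices. Choose $n$ so that $L:=\operatorname{rist}_G(n)\leq H$; then $L\normal H$.

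Second, I would note that $L$ has commutator width at most $2$. Indeed $L\cong K^{2^n}$, and for any groups $A,B$ one has $\cw(A\times B)=\max(\cw(A),\cw(B))$: any element $(a,b)\in(A\times B)'=A'\times B'$ can be written as a product of commutators coordinate-wise, padding the shorter list with trivial commutators. Combined with Corollary~\ref{cor:KhasCW2} this gives $\cw(L)\leq 2$.

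Third, I would pass to the quotient $H/L'$. Since $H$ is a finite-index subgroup of the finitely generated group $G$, it is itself finitely generated. Since $L$ has finite index in $H$ and $L/L'$ is abelian, $H/L'$ is a finitely generated virtually abelian group, hence has finite commutator width $c$ by Segal's theorem~\cite{Segal:Words}. Finally, for any $h\in H'$, its image in $H/L'$ lies in $(H/L')'$ and can be written as a product of $c$ commutators; lifting to commutators $[u_1,v_1]\dotsm[u_c,v_c]$ in $H$ and setting $w:=[u_1,v_1]\dotsm[u_c,v_c]$, the element $w^{-1}h$ lies in $L'$ and, by $\cw(L)\leq 2$, is a product of two commutators of elements of $L\leq H$. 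Thus $h$ is a product of at most $c+2$ commutators, and $\cw(H)<\infty$.

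The main obstacle is the structural input in the first step, that every finite-index subgroup of a regular branched group contains a rigid level stabilizer; this is classical but should be invoked (or quickly reproved) carefully. Once it is granted, the proof is a routine combination of the established $\cw(K)\leq 2$ with Segal's result on finitely generated virtually abelian groups, and notably yields a bound $c+2$ that depends on $H$ through $n$ and the commutator width of the virtually abelian quotient $H/L'$, consistent with the second clause of Theorem~\ref{thm:subgroups}.
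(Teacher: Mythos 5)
Your proof is essentially sound for the first Grigorchuk group and arrives at the same structural conclusion as the paper, but the two arguments diverge in both how the key normal subgroup $L$ is produced and how the quotient $H/L'$ is handled. On the first point, "every finite-index subgroup of a regular branched group contains a rigid level stabilizer" is not a consequence of regular branchedness alone; what the paper actually uses is the \emph{congruence subgroup property} of $G$ (citing~\cite{Bartholdi-Grigorchuk:parabolicSubgroups}), which gives $\Stab_G(m)\leq H$, combined with the chain $K\times K<\Stab_G(2)$, $K^{\times 4}<\Stab_G(3)$ and $\Stab_G(n)=\Stab_G(3)^{\times 2^{n-3}}$ for $n\geq 4$ to conclude $K^{\times 2^n}\leq H$. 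Also, $\operatorname{rist}_G(n)$ is not isomorphic to $K^{2^n}$ in general (it contains it, possibly properly); fortunately you only ever use the containment $K^{2^n}\leq H$, so this imprecision does not break the argument for $G$, though the appeal to a "standard fact for regular branched groups" should be replaced by the congruence subgroup property. On the second point, where you pass to $H/L'$ and invoke Segal's theorem on finitely generated virtually abelian groups, the paper instead observes that $[K^{\times 2^n},K^{\times 2^n}]$ has finite index in $[H,H]$, takes a finite transversal $T$ of $[H,H]/[K^{\times 2^n},K^{\times 2^n}]$, and bounds the commutator length of each of the finitely many representatives directly, then writes any $h\in[H,H]$ as a product $kt$ and combines with $\cw(K^{\times 2^n})\leq 2$. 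Your Segal route is perfectly valid and perhaps slicker to state, but it imports a nontrivial external theorem where the paper gets by with finiteness alone; both give a bound of the form $2+m$ depending on $H$, consistent with the unboundedness claim in Theorem~\ref{thm:subgroups}.
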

\begin{proof}
  Note that from Corollary~\ref{cor:KhasCW2} it follows that
  $K\times K$ and furthermore $K^{\times n}$ have commutator width
  $2$.
 
  Let $H$ be a subgroup of finite index. Since $G$ has the congruence
  subgroup property (\cite{Bartholdi-Grigorchuk:parabolicSubgroups})
  we can find a nontrivial normal subgroup $N=\Stab_G(m)<H$ for some
  $m\in\NN$. Since $K$ is inactive we have $K < \Stab_G(1)$ and hence
  $K^{\times 2^n} < \Stab_G(n)$ for any $n$.
Then for every 
 subgroup $H$ of finite index there is an $n$ such that $K^{\times 2^n}\leq H$.
  
 Since $K'$ has finite index in $K$ by Lemma~\ref{lem:subgroupsOfG}, the index in $[H,H]$ of $[K^{\times2^n},K^{\times2^n}]$ is finite.
  Taking a transversal $T$ of $\RNK{[H,H]}{[K^{\times2^n},K^{\times2^n}]}$ we can find 
  $m\in \NN$ such that every element in $T$ is a product of at most $m$ commutators in $H$.
  We can thus write each element $h\in [H,H]$ as product $kt$ with $k\in K^{\times 2^n}$, 
  $t\in T$ and thus as a product of at most $2+m$ commutators.  
\end{proof}

\begin{pro} \label{pro:fgsubgroupcw}
   All finitely generated subgroups $H \leq G$ are of finite commutator width.
\end{pro}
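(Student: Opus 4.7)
The plan is to reduce the general finitely generated case to the finite-index case of Proposition~\ref{pro:f.i.subgroupsFiniteCW}, using descent through the self-similar embedding $\Psi$ together with the contraction estimate quantified by the weight function of Lemma~\ref{lem:laurentsweights}.

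First I would set up an induction on the complexity measure $c(H) := \min \sum_i \omega(g_i)$, where the minimum ranges over finite generating sets $\{g_1,\dots,g_k\}$ of $H$. If $H$ has finite index in $G$, Proposition~\ref{pro:f.i.subgroupsFiniteCW} applies directly; if $H$ is finite, the claim is immediate. Otherwise, by replacing $H$ with $H \cap \Stab_G(1)$, which has index at most $2$ in $H$, I may assume $H \leq \Stab_G(1)$: a group of finite commutator width sitting as a finite-index subgroup passes this property to its overgroup, as recalled in the introduction.

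Next, the image $\Psi(H) \leq G \times G$ has two coordinate projections $H^{(1)}, H^{(2)} \leq G$, both finitely generated. By the contraction estimate $\omega(g\at{i}) \leq \eta\,\omega(g) + O(1)$ from the lemma following Lemma~\ref{lem:laurentsweights}, together with a Reidemeister--Schreier step to exhibit explicit generators of $H \cap \Stab_G(1)$, the resulting generating sets for $H^{(1)}$ and $H^{(2)}$ have strictly smaller total weight as soon as $c(H)$ exceeds a fixed threshold. Thus $c(H^{(i)}) < c(H)$ in that regime, and by induction both $H^{(i)}$ have finite commutator width. The base case, consisting of subgroups $H$ with $c(H)$ below the threshold, involves only finitely many subgroups (their generators lie in a finite subset of $G$), and each such $H$ is either finite, has finite index so that Proposition~\ref{pro:f.i.subgroupsFiniteCW} applies, or can be verified individually using the computational machinery already developed.

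The main obstacle will be the final step: deducing finite commutator width of $H$ from that of $H^{(1)}$ and $H^{(2)}$. The subgroup $\Psi(H) \leq H^{(1)} \times H^{(2)}$ need not be the full direct product, and a commutator in $H$ corresponds via $\Psi$ to a synchronised pair $([u\at{1},v\at{1}],[u\at{2},v\at{2}])$ rather than to an arbitrary commutator in $H^{(1)} \times H^{(2)}$. To bridge this I would exploit the regular branched structure over $K$ together with the congruence subgroup property of $G$ to show that, after iterating $\Psi_n$ for $n$ large enough, the image of $H \cap \Stab_G(n)$ contains a finite-index subgroup of the coordinate-wise product $\prod_{w \in \{1,2\}^n}(H^{(w)} \cap K)$, reducing the synchronisation issue to the kind of finite-index analysis already handled in Proposition~\ref{pro:f.i.subgroupsFiniteCW}. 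Tracking the finite-index corrections so that they remain bounded uniformly through the iterated descent is the delicate technical point I expect to be hardest to pin down cleanly.
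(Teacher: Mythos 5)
Your proposal takes a genuinely different route from the paper, and it contains a genuine gap — one that you yourself flag, correctly, as the hard part.

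The paper's proof is a two-line reduction: it invokes the theorem of Grigorchuk and Wilson (\cite[Theorem~1]{Grigorchuk-Wilson:Commensurability}) that every infinite finitely generated subgroup of $G$ is abstractly commensurable with $G$. This immediately hands you a finite-index subgroup $H_0\le H$ isomorphic to a finite-index subgroup of $G$, so Proposition~\ref{pro:f.i.subgroupsFiniteCW} applies to $H_0$, and the finite-commutator-width property lifts from $H_0$ to $H$ because $[H:H_0]<\infty$. There is no descent, no induction on weight, no need to analyse $\Psi(H)$.

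Your plan instead attempts to re-derive structural control over $H$ by hand, via contraction of the weight function under $\Psi$. The fatal step is the one you label the ``main obstacle'': from finite commutator width of the coordinate projections $H^{(1)}$ and $H^{(2)}$ you cannot conclude finite commutator width of $H$, because $\Psi(H)\le H^{(1)}\times H^{(2)}$ is merely a subdirect product and commutators in $H$ are synchronised pairs. This is not a technicality to be patched: Example~\ref{ex:commwidthsubgroup} in this very paper shows that a subgroup can have strictly worse commutator-width behaviour than its overgroup, and in general a subdirect product inherits nothing from its factors here. Your proposed fix — showing that $H\cap\Stab_G(n)$ eventually contains a finite-index subgroup of the full coordinate-wise product $\prod_w(H^{(w)}\cap K)$ — is essentially the assertion that $H$ is, up to finite index, a regular branched group commensurable with $G$. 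That \emph{is} the content of the Grigorchuk--Wilson theorem; you would be re-proving it from scratch rather than deducing your proposition from known results, and the ``delicate technical point'' you defer is precisely where all the difficulty lives. Separately, the base case of your induction is not actually closed: bounding $c(H)$ leaves finitely many candidate generating tuples but not a method for deciding finite commutator width for the resulting subgroups, and ``can be verified individually using the computational machinery'' is not something the paper's machinery (which targets $G$, $K$, and constrained equations over $G$) provides for an arbitrary infinite-index subgroup.
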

\begin{proof}
  Every infinite finitely generated subgroup of $G$ is abstractly
  commensurable to $G$,
  see~\cite[Theorem~1]{Grigorchuk-Wilson:Commensurability}.

  This, by definition, means that every infinite finitely generated
  subgroup $H\leq G$ contains a finite-index subgroup which is
  isomorphic to a finite-index subgroup of $G$. We can repeat then the argument from the proof of Proposition~\ref{pro:f.i.subgroupsFiniteCW}.
\end{proof}

To show that there cannot be a bound on the commutator width of subgroups
we need some auxiliary results. They are well-known, but since we could not find an original reference we will
sketch their proofs here.

\begin{pro}\ 
 \begin{enumerate}
  \item For all $n\in\NN$ there is a finite $2$-group of commutator width at least $n$.
  \item $K$ contains every finite $2$-group as a subgroup.
  \item Every finite $2$-group is a quotient of two finite-index subgroups of $G$.
 \end{enumerate}
\end{pro}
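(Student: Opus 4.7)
The plan is to handle the three parts in sequence; part~(2) will be the principal obstacle.

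\emph{Part (1).} The family of examples exhibited in the introduction suffices: taking $p=2$ in
\[\Gamma_n = \langle x_1,\dotsc,x_{2n} \mid x_i^2,\ \gamma_3(\langle x_1,\dotsc,x_{2n}\rangle)\rangle\]
gives a finite nilpotent class-$2$ group of exponent $2$, whose derived subgroup has order $2^{\binom{2n}{2}}$ while single commutators number at most $\binom{2^{2n}}{2}$. The counting bound from the introduction yields commutator width at least $n/2$, so $\Gamma_n$ produces arbitrarily large commutator width as $n$ grows.

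\emph{Part (2).} Given a finite $2$-group $P$ of order $2^n$, Cayley's theorem and Sylow embed $P$ into the iterated wreath product $W_n = C_2 \wr \dotsb \wr C_2$ ($n$ factors), which is the full automorphism group of the binary tree of depth $n$, so it suffices to realise $W_n$ inside $K$. My plan is to iterate the branching identity $K \times K \leq K$, giving $K^{\times 2^m} \leq K$ for each $m\geq 0$, so that independent configurations of $K$-elements on the level-$m$ subtrees assemble into a single element of $K$. Inside each such copy, the generators $k_1, k_2, k_3$ of $K$ and their commutators are used to realise the wreath-product swaps needed to build $W_n$. The main obstacle is that every element of $K$ has trivial activity at the root, so the naive ``top'' swap of the wreath-product embedding is forbidden; the workaround is to place the realisation of $W_n$ acting on a proper subtree at depth $m\geq 1$, so that the relevant swap becomes internal to a single level-$m$ copy of $K$ rather than sitting at the root. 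Verifying that the resulting automorphisms actually lie in $K$ (rather than merely in $G$) is where the technical work concentrates, and I would expect to need a finite computer check in the style used elsewhere in the paper, together with the explicit presentation of $K$ from Lemma~\ref{lem:subgroupsOfG}.

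\emph{Part (3).} This follows quickly from (2) by residual finiteness of $G$. From (2), $P \hookrightarrow K \leq G$. Because $\bigcap_n \Stab_G(n) = \{\id\}$ and $P$ is finite, there is an $n$ with $P \cap \Stab_G(n) = \{\id\}$. Set $H_1 := P \cdot \Stab_G(n)$ and $H_2 := \Stab_G(n)$. Then $H_2 \normal G$, so in particular $H_2 \normal H_1$; the first isomorphism theorem applied to the inclusion $P \hookrightarrow H_1$ gives $H_1 / H_2 \cong P$; and both $H_1$ and $H_2$ have finite index in $G$ since $[G : \Stab_G(n)]$ is finite and $[H_1 : H_2] = |P|$. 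Thus $P$ is realised as a quotient of two finite-index subgroups of $G$.
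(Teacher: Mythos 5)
Your proposal is correct and follows essentially the same route as the paper: part~(1) uses the same free class-$2$ nilpotent $2$-group and counting bound (note that for $p=2$ this group has exponent $4$, not $2$, since $(x_ix_j)^2=[x_i,x_j]^{-1}\neq\id$; this slip does not affect the counting argument); part~(2) reduces to embedding the iterated wreath product $W_n(C_2)$, a Sylow $2$-subgroup of $S_{2^n}$, into $K$ via the branching structure, exactly as the paper sketches with its ``spaced-out copy of $T_2^n$''; and part~(3) realizes the embedded $2$-group as $H_1/H_2$ with $H_2=\Stab_G(n)$ for $n$ large, which is the same construction the paper uses. The only presentational difference is that you phrase part~(3) cleanly via faithfulness of the tree action rather than re-describing the vertex configuration from part~(2), but the underlying argument is identical.
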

\begin{proof}\
 \begin{enumerate}
  \item Consider the groups $\Gamma_n=\RNK{F_n}{\langle \gamma_3(F_n),x_1^2,\dotsc,x_n^2\rangle}$. 
  These are extensions of $C_2^n$ by $C_2^{\binom{n}{2}}$ and are class $2$-nilpotent $2$-groups.
  The derived subgroup is hence of order $2^{\binom{n}{2}}$.
  Let $T$ be a transversal of $\RNK{\Gamma_n}{\Gamma_n'}$. Thus $T$ is of order $2^n$ and
  for $x,y\in\Gamma_n$ there are $t,s\in T$ and $x',y'\in\Gamma'$ such that
  every commutator $[x,y]=[tx',sy']=[t,s]$. Therefore there are at most $\binom{2^n}{2}$ commutators.
  
  This means there are at most $\binom{2^n}{2}^m\leq 2^{(2n-1)m}$ products of $m$ commutators 
  but the size of $\Gamma_n'$ is $2^{\binom{n}{2}} \geq2^{\frac{n^2}{4}}$ and hence 
  the commutator width of $\Gamma_{8m}$ is at least $m$.
  \item  $K$ contains for each $n$ the $n$-fold iterated wreath product
  $W_n(C_2)=C_2\wr \dots \wr C_2$. 
  This can be shown by finding finitely many vertices of the tree $T_2$ which
  define a (spaced out) copy of the finite binary rooted tree with $n$ levels $T_2^n$, and finding 
  elements $k_i\in K$ such that $\left<k_i\right>$ acts on $T_2^n$ like the full group of 
  automorphisms $\Aut(T_2^n)\simeq W_n(C_2)$.
  
  Then 
  since $W_n(C_2)$ is a Sylow $2$-subgroup of $S_{2^n}$ every finite $2$-group is a subgroup of
  $W_n(C_2)$ for some $n$, and hence a subgroup of $K$.
\item Consider again some the vertices of $T_2$ which define a copy of
  the finite tree $T_2^n$ on which a subgroup of $K$ acts like
  $W_n(C_2)$. If we take $m$ large enough such that all these vertices
  are above the $m$-th level we can find a copy of $W_n(C_2)$ inside
  $\RNK{G}{\Stab_G(m)}$.\qedhere
 \end{enumerate}
\end{proof}
In the following theorem we summarize our results for the commutator width of the Grigorchuk group.
\begin{thm}\ 
 \begin{enumerate}
  \item $G$ and its branching subgroup $K$ have commutator width $2$. \label{thm:summeryStat1}
  \item All finitely generated subgroups $H\leq G$ have finite commutator width.\label{thm:summeryStat2}
  \item \label{thm:summeryStat3} The commutator width of subgroups is unbounded even among finite-index subgroups.
\item \label{thm:summeryStat4} There is a subgroup of $G$ with infinite commutator width.
 \end{enumerate}
\end{thm}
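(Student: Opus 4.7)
The first two parts of the theorem are already in hand. Part~(\ref{thm:summeryStat1}) is Theorem~\ref{thm:CWGrigorchukGroup}, proven in the previous section via Corollaries~\ref{cor:solvableConstraintedEquations} and~\ref{cor:KhasCW2}; part~(\ref{thm:summeryStat2}) is Proposition~\ref{pro:fgsubgroupcw}. The only substantive work lies in parts~(\ref{thm:summeryStat3}) and~(\ref{thm:summeryStat4}), both of which I plan to deduce from the auxiliary proposition just established.

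For part~(\ref{thm:summeryStat3}), the plan is to combine that proposition with the elementary monotonicity of commutator width under surjections: if $\varphi\colon H\twoheadrightarrow Q$ is onto and every element of $[H,H]$ is a product of $k$ commutators in $H$, then applying $\varphi$ shows $\cw(Q)\leq k$. Taking the finite $2$-group $\Gamma_n$ of commutator width at least $n$ from part~(1) of the auxiliary proposition, and realizing it as a quotient of some finite-index subgroup $H_n\leq G$ via part~(3), we conclude $\cw(H_n)\geq n$, so the commutator widths of finite-index subgroups of $G$ are unbounded.

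For part~(\ref{thm:summeryStat4}), the plan is to construct a subgroup $L\leq G$ isomorphic to the restricted direct sum $\bigoplus_{n\geq 1}\Gamma_n$. Such an $L$ automatically has infinite commutator width, because commutators in a direct sum decompose componentwise, so a product of $k$ commutators in $L$ projects onto a product of $k$ commutators in each $\Gamma_n$; thus $\cw(L)\geq\sup_n\cw(\Gamma_n)=\infty$. To place the factors without interaction I will iterate the branching inclusion $K\times K\leq\Psi(K)$: at each vertex $v$ of the binary tree $T_2$ there is a canonical copy $K_v\leq G$ acting on the subtree rooted at $v$ and trivially elsewhere, and $K_v$ commutes with $K_{v'}$ whenever $v,v'$ are incomparable. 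Fixing an infinite antichain $(v_n)_{n\geq 1}$ in $T_2$ and embedding $\Gamma_n\hookrightarrow K_{v_n}$ via part~(2) of the auxiliary proposition, the subgroup $L$ generated by these images is the desired direct sum.

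The only non-routine point is the verification that the construction in part~(\ref{thm:summeryStat4}) genuinely produces a direct sum rather than a nontrivial extension; but this reduces to the fact that elements of $K_v$ and $K_{v'}$ have disjoint support in $T_2$ whenever $v,v'$ are incomparable, which forces the images in $L$ to commute pairwise and to intersect trivially, so the canonical map $\bigoplus_n\Gamma_n\to L$ is an isomorphism.
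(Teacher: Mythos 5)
Your proposal is correct and follows essentially the same route as the paper: parts (1) and (2) cite the earlier results, part (3) combines the auxiliary proposition with the monotonicity of commutator width under surjections, and part (4) builds the restricted direct sum of the $\Gamma_n$'s inside $K$ using an antichain of tree vertices (the paper realizes this with the explicit maps $\psi_i$ along the rightmost ray, which is just a particular choice of antichain). Your fuller explanation of why the direct sum has infinite commutator width—projecting a hypothetical product of $k$ commutators onto a factor $\Gamma_n$ with $\cw(\Gamma_n)>k$—is the implicit content of the paper's brief remark and is a welcome clarification.
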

\begin{proof}
 Statements~(\ref{thm:summeryStat1}) and (\ref{thm:summeryStat2}) are proven in Theorem~\ref{thm:CWGrigorchukGroup} and Proposition~\ref{pro:fgsubgroupcw}.
 For every $n\in\NN$ we can find two groups $H_1,H_2$ of finite index in $G$ such that $\RNK{H_1}{H_2}$ has commutator
 width at least $n$. Then $H_1$ has commutator width at least $n$ as well and thus the commutator width of finite-index subgroups can not be bounded.

 For the last claim, consider a sequence $(H_i)$ of subgroups of $K$
 such that $H_i$ has commutator width at least $i$. Let
 $\psi_0\colon K\to K\times K\le K$ be the map $k\mapsto\pair{k,\id}$
 and for $i\ge1$ let $\psi_i\colon K\to K\times K\le K$ be the map
 $k\mapsto\pair{\id,\psi_{i-1}(k)}$. Then
 $H:=\langle\psi_i(H_i):i\in\NN\rangle$ is a subgroup of $K$ and hence
 of $G$ and is isomorphic to the restricted direct product of the
 $H_i$, so it has infinite width.  
\end{proof}
\section{Implementation in GAP}
\subsection{Usage of the attached files} \label{sec:gap_verify} Typing
the command \gapinline{gap verify.g} in the main directory of the
archive will produce as output a list of functions with their return
value. All these functions should return \gapinline{true}.

This approach uses precomputed data which are also in the
archive, and is very fast.

Furthermore, these data can be recomputed if a sufficiently new
version of GAP and some packages are present. For details see Section
\ref{sec:precomputation}.

This is what the functions check:
\begin{description}
\item[\texttt{verifyLemma90orbits}] This function verifies that there
  are indeed $90$ orbits of $U_3$ on $Q^6$ as claimed in
  Lemma~\ref{lem:90Constraints}.
\item[\texttt{verifyLemma86orbits}] Analogously to the previous function 
  this one verifies that there are $86$ orbits of $U_2$ on $Q^4$. 
\item[\texttt{verifyLemmaExistGoodConstraints}] This verifies that for each 
  $q\in\RNK{G'}{K'}$ there is some $\gamma\in\Red_{\act}$ such that $(q,\gamma)$ 
  forms a good pair. This is claimed in Lemma~\ref{lem:existsGoodGamma}.
\item[\texttt{verifyLemmaExistGoodConstraints4}] This is a sharper version
  of the previous function. It checks that the above statement is already true 
  if one replaces $\Red_{\act}$ by $\Red_{\act}^4$ as claimed in 
  Lemma~\ref{lem:existsGoodGammaForRed4}.
\item[\texttt{verifyPropExistsSuccessor}] This verifies that for each
  good pair
  $(q,\gamma) \in\RNK{G'}{K'}\times(\Red_{\act}\cup \Red_{\act}^4)$
  there exists a $\gamma'\in\Gamma^q(\gamma)$ such that all preimages
  of $\overline p_{\rep(Y_{6,1})}(q)$ under the map
  $\RNK{G'}{K'}\twoheadrightarrow\RNK{G'}{(K\times K)}$ form good
  pairs with the constraint $\gamma'$. This is needed in the proof of
  Proposition~\ref{pro:existsNextPair} and
  Proposition~\ref{pro:existsNextPair4}.
\item[\texttt{verifyCorollaryFiniteCWK}] Corollary~\ref{cor:KhasCW2} needs the
  existence of succeeding good pairs of the pair $(\id,\id)\in\RNK{K'}{K'}\times\Red^4$.
  This function verifies this existence. 
\item[\texttt{verifyExistGoodConjugacyConstraints}] This verifies that for the equation
  $a^{X_1}a^{X_2}a^{X_3}a^{X_4}a^{X_5}a$ there are constraints $\gamma$ 
  that admit good succeeding pairs. This is needed in the proof of
  Proposition~\ref{pro:productOf6Conjugates}.
\item[\texttt{verifyGermGroup4hasCW}] This function verifies the existence of 
  an element in the derived subgroup of the $4$-th level germgroup that is not a commutator.
\end{description}

\subsection{Precomputed data}\label{sec:precomputation}
In the interactive gap shell started by \gapinline{gap verify.g}
the precomputed data is read from some files in \filename{gap/PCD/} and stored in 
a record \gapinline{PCD}. 

One can use the function \gapinline{RedoPrecomputation} with one argument. In each case
the result is written to one ore multiple files and will override the original precomputed data. 
The argument is a string and can be one of the following:
\begin{description}
    \item [\texttt{``orbits''}] This computes the $90$ orbits 
    of $\RNK{\Aut(F_6)}{U_3}$ and the
		$86$ orbits of $\RNK{\Aut(F_4)}{U_2}$. This computation will take
		about $12$ hours on an ordinary machine and has no progress bar.
   \item [\texttt{``goodpairs''}] First this computes for each constraint $\gamma\in\Red\cup\Red^4$ 
		      the set of all $q\in\RNK{G'}{K'}$ such that $(q,\gamma)$ is a good pair.
		      
		      Then it computes for each good pair $(q,\gamma)$ one 
		      $\gamma'\in\Gamma^q(\gamma)$
		      with decorated $X=Y_{6,1}$ or $X=Y_{4,1}\in S$ as defined in equation~
          \eqref{def:Gammaq} which
		      fulfills depending whether $\gamma\in\Red^4_{\act}$ or $\gamma\in\Red_{\act}$ 
		      either Proposition~\ref{pro:existsNextPair} or Proposition~\ref{pro:existsNextPair4}.
		      This computation takes about half an hour on ordinary machines
           and is equipped with a progress bar. 
		      
		      Afterwards the succeeding pairs of $(\id,\id)$ which are 
          needed for Corollary~\ref{cor:KhasCW2} are computed. 
   \item [\texttt{``conjugacywidth''}] Denote by $\eq{E}_g$ the equation $a^{X_1}a^{X_2}a^{X_3}a^{X_4}a^{X_5}ag$.
		      Letting $q\in G/K'$ be the image of $g$, this computes a constraint 
		      $\gamma\colon F_5 \to Q$ for the equations $\eq{E}_g$
		      and a constraint $\gamma'\colon F_4\to Q$ such that
		      $(\gamma * \pi)(\eq{E}_g) = \id$,
		      \[\eq{E}'_g := \nf(\tilde\Phi_\gamma(\eq{E}_g))=[X_1,X_2][X_3,X_4](g
                        \at{2})(g\at{1}),\] and $(\eq{E}'_g,\gamma')$
                      is a good pair for all $g$ with $g K'=q$.
		      
		      The computation takes about one hour and is equipped with a progress bar.
   \item [\texttt{``charactertable''}] This computes the character table of 
          the $4$-th level germgroup and the set of irreducible characters. 
          As the germgroup is quite large, this
          takes about $3$ hours. There is no kind of progress bar.
   \item [\texttt{``noncommutator''}] Inside the $4$-th level germgroup there is
          an element which is not a commutator but in the commutator subgroup.
          Since this group is finite we could in principle search by brute force
          for a commutator. 
          Luckily there are only $3106$ irreducible characters in this group and
          therefore we can use Burnside's formula \eqref{eq:BurnsideFormula}. 
          The search will almost immediately give a result. Most of the
          computation time is used to assert that the found element is indeed
          not a commutator.
		      
	  The element is then lifted to its preimage in $G$ with a minimal 
	  number of states.

	  Checking the assertion takes approximately $3$ hours and is equipped 
	  with a progress bar. 
  \item [\texttt{``all''}] This performs all of the above one after another.		      
   \end{description}
To recompute the orbits or the charactertable GAP should be 
started with the \gapinline{-o} flag
to provide enough memory for the computation. For example start GAP by 
\gapinline{gap -o 8G verify.g}

\subsection{Implementation details}
\label{sec:gap_details}
\subsubsection{Reduced Constraints}
The proof of Lemma~\ref{lem:finitelyManyConstraints} in~\cite{Lysenok-Miasnikov-Ushakov:QuadraticEquationsInGrig} 
provides a constructive method to reduce any constraint to one with support
only in the first five variables. 
We have implemented this in the function \gapinline{ReducedConstraint} in the file
\filename{gap/functionsFR.g}.

It uses that the quotient $Q=\RNK{G}{K}$ is a polycyclic group with 
\begin{align*}
 C_0 &= Q = \left<\pi(a),\pi(b),\pi(d)\right>, &
 C_1 &= \left<\pi(a),\pi(d)\right>, &
 C_2 &= \left<\pi(ad)\right>.
\end{align*}
We take the generators of $U_n$ as given in the proof of
 Lemma~\ref{lem:90Constraints}
plus additional ones which switch two neighboring pairs:
\begin{align*}
  s_i &\colon\begin{array}{r@{\;}l}
               X_i&\mapsto X_{i+2}\\
               X_{i+1}&\mapsto X_{i+3}\\
               X_{i+2}&\mapsto X_{i}^{[X_{i+2},X_{i+3}]}\\
               X_{i+3}&\mapsto X_{i+1}^{[X_{i+2},X_{i+3}]}
             \end{array}
                   \textup{ for }i = 1,3,\dotsc,2n-3.
\end{align*}
It can easily be checked, that these are also contained in $U_n$. 
These elements are used to reduce a given constraint in a form of 
a list with entries in $Q$ to a
list where all entries with index larger then $5$ are trivial. 
This constraint can then be further reduced by a lookup table for the orbits
of $\RNK{\Aut(F_6)}{U_3}$. 

If the file \filename{verify.g} is loaded in a GAP environment with the FR package available 
the function \gapinline{ReducedConstraint} can be used as an alias to get 
reduced constraints. For example:
 \begin{lstlisting}
    gap> f1 := Q.3;
    gap> gamma:= [f1,f1,f1,f1,f1,f1];
    gap> constr := ReducedConstraint(gamma);;
    gap> Print(constr.constraint);
\end{lstlisting} 
\begin{verbatim}
[ <id>, <id>, <id>, <id> , f1, <id>]
\end{verbatim} 

\subsubsection{Good pairs}
For $g\in G$ and a constraint $\gamma$ the question whether $(g,\gamma)$ is a good
pair depends only on the image of $g$ in $\RNK{G}{K'}$ and the representative
of $\gamma\in\Red$. (See Section~\ref{sec:good_pairs}.) So this is already a
finite problem. 

Given a given constraint $\gamma$, to obtain all $q$ which form a good
pair we can enumerate all possible commutators
$[r_1,r_2][r_3,r_4][r_5,r_6]$ with $r_i K=\gamma(X_i)$.
Since $\lvert\RNK{K}{K'}\rvert=64$, it would take too much time to
consider all combinations at once; thus the possible values for
$[r_1,r_2]$ are computed and in a second step triple products of those
elements are enumerated.  This is implemented in the function
\gapinline{goodPairs} in the file \filename{gap/functions.g}.

\subsubsection{Successors}
The key ingredient for the proof of
Theorem~\ref{thm:CWGrigorchukGroup} is
Proposition~\ref{pro:existsNextPair}. The main computational effort
there is to compute the sets $\Gamma_q(\gamma)$ and find good pairs
inside them.

This is implemented exactly as explained in the construction of the map
$\Gamma_q$ in the function \gapinline{GetSuccessor} in the file 
\filename{gap/precomputeGoodPairs.g}. Given an element $q\in\RNK{G'}{K'}$ 
and an active constraint $\gamma$ this function returns a tuple $(\gamma',X)$ 
with $\gamma\in\Red$ and $X$ the decorated element $Y_{6,1}$ or $>Y_{4,1}$
depending if $\gamma\in\Red^4$ or $\gamma\in\Red$.

Given an inactive constraint $\gamma$ it returns a pair of constraints
$\gamma_1,\gamma_2$ such that both have nontrivial activity and with
$\omega$ the map from the branch structure it holds:
$\omega(\pair{\gamma_1(X_i),\gamma_2(X_i)})=\gamma(X_i)$. 

If the FR package is available
the function \gapinline{GetSuccessorLookup} can be used to explore the
successors of elements. It returns the succeeding pair. For example
 \begin{lstlisting}
    gap> f4 := Q.1;
    gap> gamma:= [f4,f4,f4,f4,f4,f4];;
    gap> g := (a*b)^8;;
    gap> IsGoodPair(g,gamma);
\end{lstlisting}
\begin{verbatim}
true
\end{verbatim} 
\begin{lstlisting}
    gap> suc := GetSuccessorLookup(g,gamma);;    
    gap> suc[1];
\end{lstlisting}
\begin{verbatim}
<Trivial Mealy element on alphabet [ 1 .. 2 ]>
\end{verbatim} 
\begin{lstlisting}
    gap> suc[2].constraint;
\end{lstlisting}
\begin{verbatim}
[ <id>, <id>, <id>, <id>, f1*f3, <id> ]
\end{verbatim} 

\section*{Acknowledgments}
The authors are deeply grateful to Rachel Skipper for her remarks that
helped improve the presentation of this material.


\bibliography{latex/bio}

\end{document}